\documentclass{article}
\usepackage{amsmath,amssymb,amsthm}
  \usepackage{paralist}
  \usepackage{graphics} 
 \usepackage{epstopdf}
\usepackage{mathtools}
\usepackage{upgreek}
\usepackage[svgnames]{xcolor}
\usepackage{mathrsfs}
\usepackage[mathscr]{euscript}
\usepackage{caption}
\usepackage{subcaption}
\usepackage{booktabs}
\usepackage{multirow}
\usepackage{siunitx}
\usepackage{csquotes}
\usepackage{enumitem}


\newtheorem{theorem}{Theorem}[section]
\newtheorem{corollary}[theorem]{Corollary}

\newtheorem{lemma}[theorem]{Lemma}

\theoremstyle{definition}

\newtheorem{remark}[theorem]{Remark}

\usepackage{cite}
\newcommand{\fz}{\frac}
\newcommand{\prz}[2]{ \frac{\partial{#1}}{\partial{#2}} }

\newcommand{\ol}{\overline}

\renewcommand{\Omega}{\varOmega}
\renewcommand{\Gamma}{\varGamma}
\renewcommand{\Psi}{\varPsi}
\renewcommand{\Pi}{\varPi}
\newcommand{\R}{\mathbb{R}}
\newcommand{\N}{\mathbb{N}}
\newcommand{\Z}{\mathbb{Z}}

\newcommand{\defeq}{\vcentcolon=}

\allowdisplaybreaks[4]

\newcommand{\cB}{\color{blue}}
\newcommand{\cK}{\color{black}}


\title{ Theoretical and numerical studies for energy estimates of the shallow water equations with a transmission boundary condition}
\date{}





\begin{document}
\maketitle

\centerline{\scshape Md. Masum Murshed$^{1,2*}$, Kouta Futai$^{1}$, Masato Kimura$^{3}$ and Hirofumi Notsu$^{3,4}$ }
\medskip
	\centerline{\footnotesize 1. Division of Mathematical and Physical Sciences, Kanazawa University, Kanazawa {\cK 920-1192}, Japan}
\medskip
	\centerline{\footnotesize 2. Department of Mathematics, University of Rajshahi, Rajshahi 6205, Bangladesh}
\medskip
        \centerline{\footnotesize 3. Faculty of Mathematics and Physics, Kanazawa University, Kanazawa {\cK 920-1192}, Japan}
\medskip
        \centerline{\footnotesize 4. Japan Science and Technology Agency, PRESTO, Kawaguchi 332-0012, Japan.}
\medskip
\centerline{\footnotesize  Emails: \cB mmmurshed82@gmail.com, futai.k.1275@stu.kanazawa-u.ac.jp,}
\centerline{\footnotesize \cB mkimura@se.kanazawa-u.ac.jp and notsu@se.kanazawa-u.ac.jp} 



\begin{abstract}
\cK
Energy estimates of the shallow water equations~(SWEs) with a transmission boundary condition are studied theoretically and numerically.
In the theoretical part, using a suitable energy, we begin with deriving an equality which implies an energy estimate of the SWEs with the Dirichlet and the slip boundary conditions.
For the SWEs with a transmission boundary condition, an inequality for the energy estimate is proved under some assumptions to be satisfied in practical computation.
Hence, it is recognized that the transmission boundary condition is reasonable in the sense that the inequality holds true.
In the numerical part, based on the theoretical results, the energy estimate of the SWEs with a transmission boundary condition is confirmed numerically by a finite difference method~(FDM).
The choice of a positive constant~$c_0$ used in the transmission boundary condition is investigated additionally.
Furthermore, we present numerical results by a Lagrange--Galerkin scheme, which are similar to those by the FDM.
From the numerical results, it is found that the transmission boundary condition works well numerically.
\end{abstract}

\section{Introduction}\label{sec1}
The shallow water equations~(SWEs) are often used for the simulation of tsunami/ storm surge in the bay.
In such simulation there are some boundaries in the open sea, see Figure \ref{bob}. In a real situation, if wave propagates towards such boundaries in the open sea, then there should not have any reflection on these boundaries.
Therefore, in the simulation a special type of boundary condition should be imposed on these boundaries.
In this study, following~\cite{Kanayamatsunami}, {\cK we employ} a transmission boundary condition on the boundaries in the open sea which is capable to remove these kind of artificial reflection.
\par
Many studies have been conducted on the prediction of the surges due to tropical storms for the Bay of Bengal region covering the coast of Bangladesh and the east coast of India, see, e.g., \cite{das72,debsarma09,Jonhs80,Paul2012b,Paul2013,Paul2014,paulpolar2017,paul2018a,paul2018b,Roy99c,Roy04}.
Almost all of the studies mentioned here have employed a radiation type boundary condition for the boundaries in the open sea, which is very similar to the transmission boundary condition used in~\cite{Kanayamatsunami}.
It is to be noted here that these papers mainly present numerical results without mathematical discussion on the stability of the model with this kind of boundary condition.
\begin{figure}[!htbp]
	\centering
	\includegraphics[width=2.0 in]{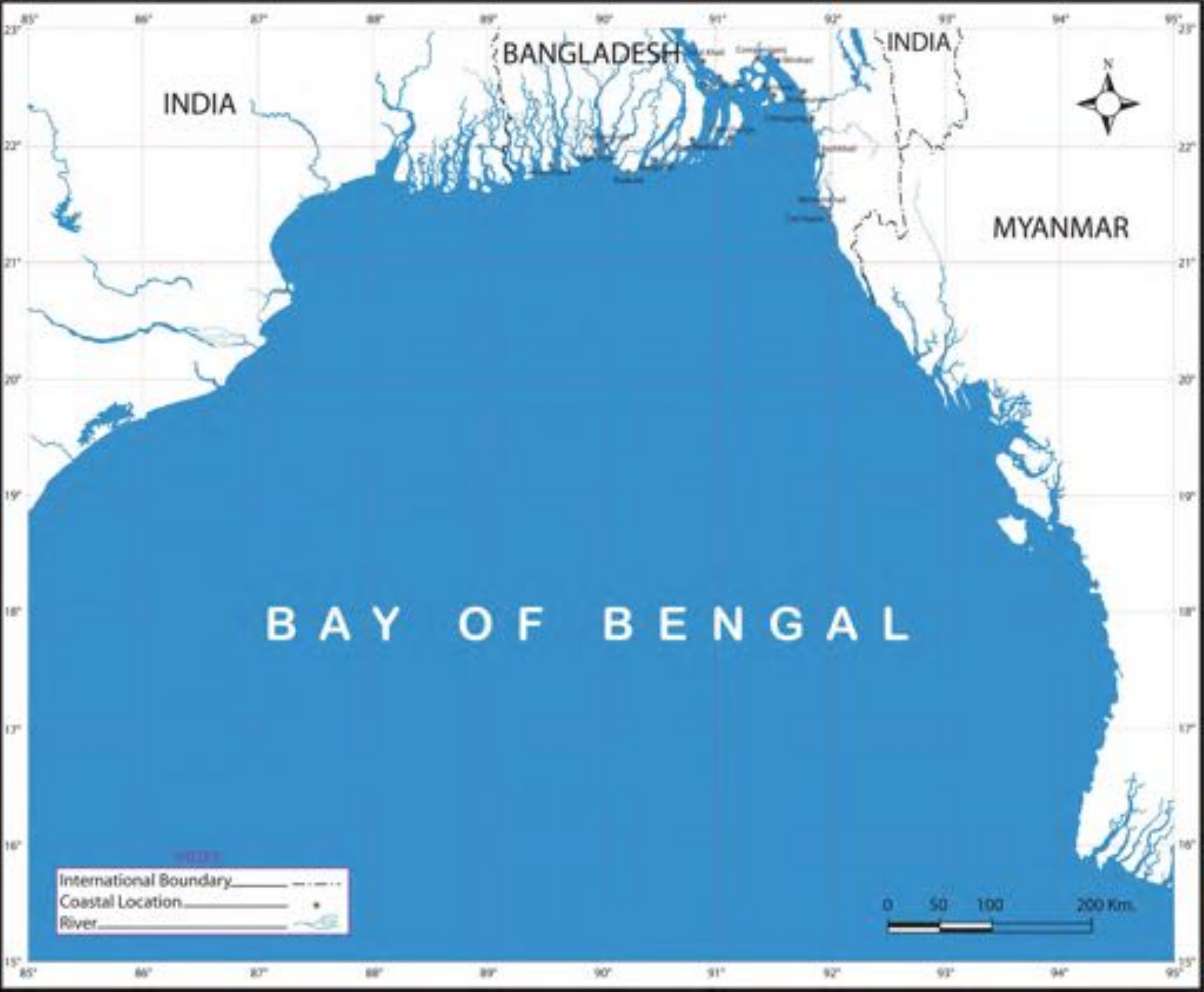}
	\caption{The Bay of Bengal and the coastal region of Bangladesh }\label{bob}
\end{figure}
\par
The SWEs can be considered as a coupled system of a pure convection equation for {\cK the function~$\phi$ of total wave height} and a simplified Navier--Stokes equation for the velocity $u = (u_1, u_2)^T$ by averaging function values in $x_3$-direction.
It is known that a boundary data for $\phi$ is necessary on the so-called inflow boundary, where $u \cdot n < 0$ is satisfied for the outward unit normal vector~$n$.
We can easily know whether the Dirichlet data for~$\phi$ is required or not on the Dirichlet {\cK and} the slip boundaries for~$u$, since the sign of~$u \cdot n$ is known {\cK a priori}.
On the transmission boundary~$\Gamma_T$, however, the boundary condition for~$u$ and~$\phi$ is mysterious and problematic from both computational and mathematical view points.
The transmission boundary condition of the form
\begin{equation}\label{eq00}
   u(x, t)=c(x)\frac{\eta(x, t)}{\phi(x, t)}n(x)
\end{equation}
is often used on~$\Gamma_T$, where $c(x)$ is a given positive valued function and $\eta(x, t)=\phi(x, t) - \zeta(x)$ is the elevation from the reference height for a given depth function~$\zeta$.
\par
Let $\phi_h^{k}$ and $ u_h^k$ be the approximations of $\phi^k \defeq \phi(\cdot, t^k)$ and $ u^k \defeq  u(\cdot, t^k)$, respectively, where $t^k \defeq k\Delta t \ (k\in \mathbb Z)$ for a time increment $\Delta t$.
In our computation we get $\phi_h^{k+1}$ by using $u_h^k$, and then $u_h^{k+1}$ by using the condition~\eqref{eq00} as the Dirichlet boundary condition for~$u$.
But at the same time we should consider~\eqref{eq00} as the boundary data for $\phi_h^{k+1}$ if the position is on inflow boundary, i.e., $ u_h^{k+1}\cdot{  n}<0$.
In fact, if $ u_h^{k+1} \cdot n < 0$, we need to give the value of $\phi_h^{k+1}$ which is unknown.
\par
We have an experience of computation of the SWEs with the transmission boundary condition by a Lagrange--Galerkin~(LG) method, where the LG method is based on the time discretization of the material derivative,
\[
\frac{\phi^{k+1}(x)-\phi^{k}(x- u^k(x)\Delta t)}{\Delta t}.
\]
The position $x- u^k(x)\Delta t $ is the so-called upwind point of~$x$ with respect to~$ u^k$.
In the computation a \enquote{nearest} boundary value of~$\phi^k$ is used if the upwind point places outside the domain, and the LG method works without boundary data for $\phi^{k+1}$ even if $u^{k+1} \cdot n < 0$.
In the LG method the problem on the transmission boundary seems to be solved numerically, but it is still problematic mathematically.
\par
In this paper, in order to understand the transmission boundary condition mathematically, we study the stability of the {\cK SWEs} in terms of a suitable energy, and confirm the stability numerically by a finite difference method~(FDM).
Since the LG method solves the problem implicitly by using the upwind point and is not suitable to understand the problem with the transmission boundary condition mathematically, we employ an FDM to avoid the special numerical {\cK treatment} in the LG method.
It is to be noted here that we can show a {\cK (successful)} energy estimate of the SWEs, when only the Dirichlet and the slip boundary conditions are employed, cf. Corollary~\ref{cor1}-(ii),
\cK
where such discussions have been done under the periodic boundary condition, e.g.,~\cite{BreDes-2006,Luc-2009}.
\cK
As far as we know, however, there is no mathematical results on the energy estimate of the SWEs with the transmission boundary condition.
\par
The stability is considered {\cK theoretically} with respect to the energy as follows.
\cK
Introducing a suitable energy, we begin with deriving an equality that 
the time-derivative of the energy consists of four terms, where three
terms are line integrals over the boundary and the other term is an
integral over the whole domain which is always non-positive, cf. Theorem~\ref{th1}.
Since the three line integrals vanish over the Dirichlet and the slip
boundaries, as a result, we have the three line integrals over the
transmission boundary and the integral over the domain, cf. Corollary~\ref{cor1}-(i).
An energy estimate is obviously obtained if there is no transmission boundary, cf. Corollary~\ref{cor1}-(ii).
In addition, we obtain that a sum of two line integrals over the
transmission boundary is non-positive under some conditions to be
satisfied in real computations, cf. Theorem~\ref{th2}.
Although, at present, the mathematical results do not derive the
stability estimate of the SWEs with the transmission boundary condition directly,
we have good information and can study the stability numerically by
using the theoretical results.
In the latter half of the paper, Sections~\ref{sec4} and~\ref{sec5}, the energy estimates derived in Section~\ref{sec3} are studied numerically.
\par
This paper organized as follows.
The problem is stated mathematically in Section~\ref{sec2}.
The energy estimate is theoretically studied in Section~\ref{sec3}.
Numerical results by an FDM are shown to see that the transmission boundary condition works well and that {\cK the solution is stable} in terms of the energy in Section~\ref{sec4}.
Here, the effect of a positive constant to be used in the transmission boundary condition is investigated.
In Section~\ref{sec5}, numerical results by an LG method are shown to support those by the FDM.
Finally, conclusions are given in Section~\ref{sec6}.
\cK
%
\section{\cK Statement of the problem}\label{sec2}
In this section, we state the mathematical problem to be considered in this paper.
Let $\Omega \subset \R^2$ be a bounded domain and~$T$ a positive constant.
We consider the problem; find $(\phi, u): \ol{\Omega} \times [0, T] \to \mathbb{R}\times \mathbb {R}^2$ such that
\begin{equation}\label{prob}
 \left\{
 \ 
  \begin{aligned}
   &  \frac{\partial\phi}{\partial t}+ \nabla \cdot(\phi u ) =0 \ &&\text{in} \ \Omega\times (0,T),\\
   & \rho\phi \Bigl[ \frac{\partial u}{\partial t}+({  u}\cdot\nabla) {  u} \Bigr] - 2\mu \nabla \cdot (\phi D(  u))+\rho g\phi\nabla \eta =0 \ &&\text{in} \ \Omega\times (0,T),\\
  & \phi =\eta+\zeta         \ \           &&\text{in} \ \Omega\times (0,T),\\
    \end{aligned}
\right.
\end{equation}
with boundary conditions
\begin{align}
& u = 0 && \hspace*{-1cm} \text{on} \ \Gamma_D\times (0,T), 
\label{0dbc} \\
& (D(u) n) \times n = 0, \ \  u \cdot n = 0  && \hspace*{-1cm} \text{on} \ \Gamma_S\times (0,T), 
\label{sbc} \\
& u = c \frac{\eta}{\phi} n && \hspace*{-1cm} \text{on} \  \Gamma_T\times (0,T), 
\label{tbc}
\end{align}
and initial conditions
\begin{align}
u =  u^0, \quad \eta = \eta^0 \qquad \text{in} \ \Omega, \ \text{at} \ t=0,
\label{IC}
\end{align}
where
\cK
$\phi$ is the total height of wave, 
$u=(u_1,u_2)^T$ is the velocity, 
\cK
$\eta: \ol{\Omega} \times [0, T] \to \mathbb{R}$ is the water level from the reference {\cK height}, 
$\zeta(x) > 0~(x \in \ol{\Omega})$ is the depth of water from the reference height, see Figure~\ref{domain}, 
$D(u) \defeq \left({\nabla u+(\nabla u )^T}\right)/2$ is the strain-rate tensor, 
$n$ is the unit outward normal vector, 
$\Gamma \defeq \partial\Omega$ is the boundary of $\Omega$, 
we assume that $\Gamma$ consists of non-overlapped three parts, 
$\Gamma_{D}$,  $\Gamma_{S}$ and $\Gamma_{T}$, i.e., 
$\overline{\Gamma}=\overline{\Gamma}_{D}\cup \overline{\Gamma}_S\cup \overline{\Gamma}_T$,  $\Gamma_{D}\cap\Gamma_{S}=\varnothing$, $\Gamma_{S}\cap\Gamma_{T}=\varnothing$,
$\Gamma_{T}\cap\Gamma_{D}=\varnothing$, the subscripts \enquote{$D$}, \enquote{$S$}, and \enquote{$T$} mean Dirichlet, slip, and transmission boundaries, respectively, 
$\rho > 0$ is the density of water, 
$\mu>0$ is {\cK the viscosity},
$g > 0$ is the acceleration due to gravity, and $c(x) \defeq c_0 \sqrt{g\zeta(x)}$ with a positive constant $c_0$.
{\cK In the rest of paper, we assume $\zeta \in C^1(\ol{\Omega})$.}
It is important to note here that the equations in~\eqref{prob} are derived
\cK
in~\cite{kanayama2006} by considering one-layer viscous SWEs.
\cK
\begin{figure}[!htpp]
	\centering
	\includegraphics[width=2.5in]{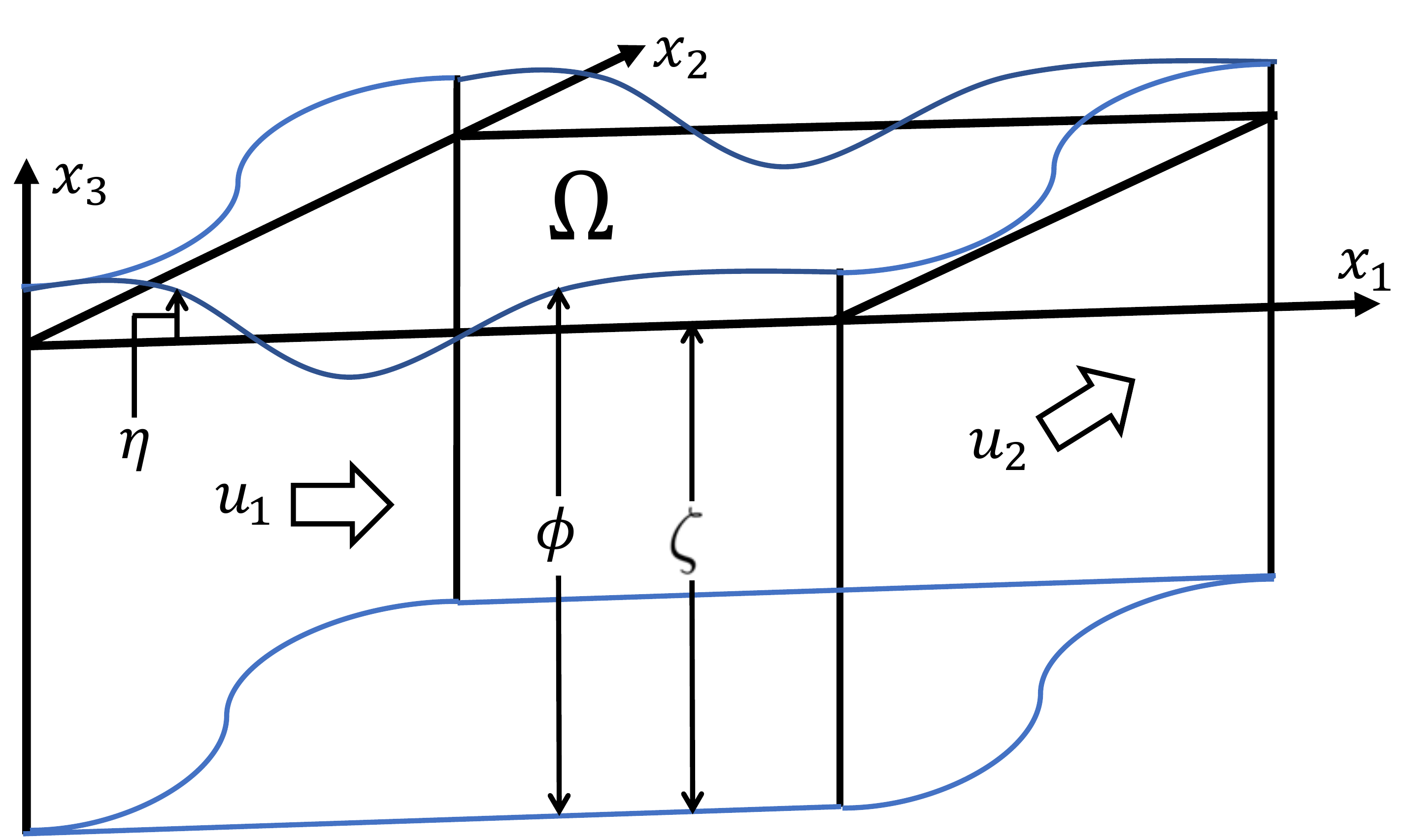}
	\caption{Model domain}\label{domain}
\end{figure}
\section{\cK Energy estimate}\label{sec3}
\cK
In this section, we define the total energy and study the stability of solutions to the problem stated in Section~\ref{sec2} {\cK in terms of the energy.}
\cK
For a solution of~\eqref{prob} 
\cK
the total energy~$E(t)$ at time $t\in [0, T]$ is defined by
\begin{align}
	E(t) \defeq E_1(t) + E_2(t), 
	\label{te}
\end{align}
where $E_1(t)$ and $E_2(t)$ are the kinetic and the potential energies defined by
\begin{align*}
	E_1(t) \defeq \int_{\Omega}\frac{\rho}{2}\phi| u|^2dx,
	\quad
	E_2(t) \defeq \int_{\Omega}\frac{\rho g |\eta|^2}{2}dx.
\end{align*}
\cK
Let symbols~$I_i(t; \Gamma)$, $i=1, \ldots, 3$, and $I_4(t; \Omega)$, $t\in [0, T]$, be integrations defined by
\cK
\begin{align*}
\cK
I_1(t; \Gamma) &\defeq - \frac{\rho}{2} \int_{\Gamma} \phi| u|^2  u \cdot {  n} \, ds, &
\cK
I_2(t; \Gamma) &\defeq -\rho g \int_{\Gamma} \phi\eta  u \cdot {  n} \, ds, \\
\cK
I_3(t; \Gamma) &\defeq 2\mu \int_{\Gamma} \phi \bigl[ D( u){  n} \bigr] \cdot  u \, ds, &
\cK
I_4(t; \Omega) &\defeq -2\mu \int_{\Omega} \phi|D( u)|^2 \, dx.
\end{align*}
These are used in the rest of this paper.
\cK
\begin{theorem}\label{th1}
Suppose that a pair of smooth functions $(\phi, u): \ol{\Omega} \times [0,T] \to \mathbb{R} \times \mathbb {R}^2 $ satisfies~\eqref{prob}.
Then, we have
\begin{align}
\frac{d}{dt}E(t) & = \cK \sum_{i=1}^{\cK 3} I_i(t; \Gamma) + I_4(t; \Omega).
\label{ED}
\end{align}
\end{theorem}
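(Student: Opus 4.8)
The plan is to differentiate the total energy $E(t)=E_1(t)+E_2(t)$ in time and substitute the two evolution equations in~\eqref{prob}, arranging the resulting volume integrals so that everything except the dissipation reduces, through integration by parts, to the three boundary integrals $I_1, I_2, I_3$. First I would apply the Leibniz rule to the kinetic energy, obtaining $\frac{d}{dt}E_1 = \frac{\rho}{2}\int_\Omega (\partial\phi/\partial t)\,|u|^2\,dx + \rho\int_\Omega \phi\,u\cdot(\partial u/\partial t)\,dx$. Into the first integral I substitute $\partial\phi/\partial t = -\nabla\cdot(\phi u)$ from the continuity equation, and into the second I substitute $\rho\phi\,(\partial u/\partial t) = -\rho\phi(u\cdot\nabla)u + 2\mu\,\nabla\cdot(\phi D(u)) - \rho g\phi\,\nabla\eta$ from the momentum equation. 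This decomposes $\frac{d}{dt}E_1$ into a density-rate term, a convective term, a viscous term, and a pressure-gradient term.

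The key algebraic observation is the identity $u\cdot[(u\cdot\nabla)u] = \tfrac12\,u\cdot\nabla|u|^2$. Using it, the convective term becomes $-\tfrac{\rho}{2}\int_\Omega \phi\,u\cdot\nabla|u|^2\,dx$, whereas integrating the density-rate term $-\tfrac{\rho}{2}\int_\Omega \nabla\cdot(\phi u)\,|u|^2\,dx$ by parts yields exactly $I_1$ together with $+\tfrac{\rho}{2}\int_\Omega \phi\,u\cdot\nabla|u|^2\,dx$. These two interior contributions cancel, so the entire kinetic-transport part collapses to $I_1$. This cancellation is the heart of the argument and is what produces a clean kinetic-energy balance.

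For the viscous term I integrate $2\mu\int_\Omega u\cdot[\nabla\cdot(\phi D(u))]\,dx$ by parts in index form: the boundary contribution is $2\mu\int_\Gamma \phi\,[D(u)n]\cdot u\,ds = I_3$, while the interior contribution is $-2\mu\int_\Omega \phi\,(\nabla u):D(u)\,dx$, where $:$ denotes the Frobenius contraction. Because $D(u)$ is symmetric, $(\nabla u):D(u)=|D(u)|^2$, so this interior term is precisely $I_4$. For the potential energy, since $\zeta$ does not depend on $t$ we have $\partial\eta/\partial t = \partial\phi/\partial t = -\nabla\cdot(\phi u)$, hence $\frac{d}{dt}E_2 = -\rho g\int_\Omega \eta\,\nabla\cdot(\phi u)\,dx$. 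Adding this to the remaining pressure-gradient term $-\rho g\int_\Omega \phi\,u\cdot\nabla\eta\,dx$ and using the product rule $\phi\,u\cdot\nabla\eta + \eta\,\nabla\cdot(\phi u) = \nabla\cdot(\eta\phi u)$, the divergence theorem turns the sum into the single boundary integral $I_2$. Summing the pieces $I_1 + I_2 + (I_3 + I_4)$ gives~\eqref{ED}.

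I expect the main bookkeeping obstacle to be the viscous term: one must take the divergence of the tensor $\phi D(u)$ with correct index placement and verify the contraction identity $(\nabla u):D(u)=|D(u)|^2$, which hinges on the symmetry of the strain-rate tensor. The assumed smoothness of $(\phi,u)$ and the hypothesis $\zeta\in C^1(\ol{\Omega})$ ensure that each integration by parts and application of the divergence theorem is legitimate.
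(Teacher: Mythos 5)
Your proof is correct, and it reaches~\eqref{ED} by a different organization of the kinetic-energy computation than the paper's. The paper first recasts the momentum equation in conservative form, $\rho\bigl[\partial_t(\phi u)+\nabla\cdot((\phi u)\otimes u)\bigr]-2\mu\nabla\cdot(\phi D(u))+\rho g\phi\nabla\eta=0$, using the algebraic identity of Lemma~\ref{lem1}-$(i)$ together with the continuity equation; it then tests with $u$ and invokes a separate integration-by-parts identity for $\int_\Omega(\nabla\cdot[(\phi u)\otimes u])\cdot u\,dx$ (Lemma~\ref{lem1}-$(ii)$, itself proved by a ``solve for $J$'' trick), plus the continuity equation once more, to reassemble $\frac{d}{dt}E_1(t)-I_1(t;\Gamma)$. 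You instead differentiate $E_1$ directly, substitute the non-conservative momentum equation and the continuity equation, and dispose of the transport terms with the elementary pointwise identity $u\cdot[(u\cdot\nabla)u]=\frac{1}{2}\,u\cdot\nabla|u|^2$ plus a single integration by parts, letting the two interior integrals cancel. The net effect is identical --- your cancellation is precisely what Lemma~\ref{lem1}-$(ii)$ encodes once the continuity equation is inserted --- but your route avoids the tensor product $\otimes$ and the auxiliary lemma altogether, so it is more self-contained and the mechanism of the cancellation is visible on the surface; the paper's route, in exchange, isolates reusable identities and works with the physically natural momentum-flux form. Your treatment of the viscous term (boundary contribution $I_3$, interior contribution $I_4$ via the symmetry of $D(u)$, i.e., $(\nabla u):D(u)=|D(u)|^2$) and of the potential-energy/pressure pairing (product rule and divergence theorem producing $I_2$) coincides with the paper's.
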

\par
We prove Theorem~\ref{th1} after preparing a lemma.
\begin{lemma}\label{lem1}
For smooth functions $\phi: \ol{\Omega} \times [0,T] \to \R$ and $u: \ol{\Omega} \times [0,T] \to \mathbb {R}^2 $, we have the following. 
\begin{align*}
(i) & \ \frac{ \partial }{ \partial t}(\phi  u)+\nabla \cdot[(\phi  u)\otimes  u ]=\left(  \frac{\partial\phi}{\partial t}+ \nabla \cdot(\phi u )   \right) u + \phi \left( \frac{ \partial  u}{ \partial t}+( u \cdot\nabla )  u\right),\\
(ii) & \ \int_{\Omega}(\nabla \cdot[(\phi  u)\otimes  u ])\cdot  u dx= \frac{1}{2} \int_{\Gamma}\phi| u|^2 u\cdot {  n} ds+\frac{1}{2}\int_{\Omega}[\nabla \cdot(\phi  u)]| u|^2dx.
\end{align*}
\end{lemma}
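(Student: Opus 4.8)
The plan is to prove both identities by direct computation in Cartesian components, reserving the one genuinely analytic step---an integration by parts---for the end of part~(ii). Throughout I write $u=(u_1,u_2)^T$ and adopt the convention $[\nabla\cdot T]_i=\sum_j\partial_j T_{ij}$ for the divergence of a tensor field, so that the $(i,j)$ entry of $(\phi u)\otimes u$ is $\phi u_i u_j$.

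For part~(i) I would simply expand both sides componentwise. On the left, the Leibniz rule gives $\partial_t(\phi u_i)=(\partial_t\phi)u_i+\phi\,\partial_t u_i$, while
\[
[\nabla\cdot((\phi u)\otimes u)]_i=\sum_j\partial_j(\phi u_i u_j)=u_i\sum_j\partial_j(\phi u_j)+\phi\sum_j u_j\partial_j u_i,
\]
where the product rule has been applied to the grouping $(\phi u_j)u_i$. Recognizing $\sum_j\partial_j(\phi u_j)=\nabla\cdot(\phi u)$ and $\sum_j u_j\partial_j u_i=[(u\cdot\nabla)u]_i$, and collecting the terms proportional to $u_i$ against those proportional to $\phi$, yields exactly the $i$-th component of the claimed right-hand side. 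Since this holds for each $i$, part~(i) follows.

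For part~(ii) I would dot the component identity from part~(i) with $u$. Summing the term $\phi[(u\cdot\nabla)u]_i$ against $u_i$ produces $\phi\,u\cdot[(u\cdot\nabla)u]$, and the key algebraic observation is that
\[
u\cdot[(u\cdot\nabla)u]=\sum_{i,j}u_i u_j\partial_j u_i=\tfrac12\sum_j u_j\partial_j|u|^2=\tfrac12\,u\cdot\nabla|u|^2.
\]
The remaining term contributes $|u|^2\,\nabla\cdot(\phi u)$, so the integrand becomes $|u|^2\nabla\cdot(\phi u)+\tfrac12\phi\,u\cdot\nabla|u|^2$.

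The one nontrivial step is the integration by parts applied to $\tfrac12\int_\Omega(\phi u)\cdot\nabla|u|^2\,dx$: writing $v=\phi u$ and $f=|u|^2$ in the divergence-theorem identity $\int_\Omega v\cdot\nabla f\,dx=\int_\Gamma f\,(v\cdot n)\,ds-\int_\Omega f\,(\nabla\cdot v)\,dx$ produces a boundary integral $\tfrac12\int_\Gamma\phi|u|^2\,u\cdot n\,ds$ together with a volume term $-\tfrac12\int_\Omega|u|^2\nabla\cdot(\phi u)\,dx$. This volume term combines with the $\int_\Omega|u|^2\nabla\cdot(\phi u)\,dx$ already present to leave precisely $\tfrac12\int_\Omega|u|^2\nabla\cdot(\phi u)\,dx$, matching the stated right-hand side. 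The main thing to be careful about is the sign bookkeeping and the grouping in the product rules (which factor is differentiated at each stage); the smoothness hypothesis on $\phi$ and $u$ guarantees that all derivatives and the boundary integral are well defined, so no further regularity discussion is needed.
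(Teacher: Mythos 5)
Your proof is correct, and your part~(i) coincides with the paper's argument: the paper simply asserts the tensor identity $\nabla\cdot[(\phi u)\otimes u]=[\nabla\cdot(\phi u)]u+\phi(u\cdot\nabla)u$ and combines it with the product rule in $t$, which is exactly what you verify componentwise (yours is, if anything, more complete, since the paper states that identity without proof). For part~(ii), however, you take a genuinely different route. The paper integrates by parts directly on the tensor divergence, obtaining $J\defeq\int_\Omega(\nabla\cdot[(\phi u)\otimes u])\cdot u\,dx=\int_\Gamma\phi|u|^2u\cdot n\,ds-\int_\Omega\phi[(u\cdot\nabla)u]\cdot u\,dx$; it then uses identity~(i) to write $J=\int_\Omega\bigl([\nabla\cdot(\phi u)]|u|^2+\phi[(u\cdot\nabla)u]\cdot u\bigr)dx$, and since the troublesome convective integral reappears, substitution yields $J=\int_\Omega[\nabla\cdot(\phi u)]|u|^2dx+\int_\Gamma\phi|u|^2u\cdot n\,ds-J$, from which the result follows by solving for $J$ --- the factor $\frac{1}{2}$ comes out of this self-referential algebra. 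You instead turn the convective term into a perfect derivative via the pointwise identity $u\cdot[(u\cdot\nabla)u]=\frac{1}{2}\,u\cdot\nabla|u|^2$ (which does not appear in the paper) and then integrate by parts on $\frac{1}{2}\int_\Omega(\phi u)\cdot\nabla|u|^2dx$, so your factor $\frac{1}{2}$ comes from differentiating $|u|^2$. Both arguments use exactly one integration by parts and are equally rigorous; yours is more direct, keeping everything pointwise until the final step, while the paper's ``solve for $J$'' device avoids introducing the auxiliary scalar identity and is the standard energy-method trick worth knowing, since it also applies when the quadratic term cannot be recognized as a perfect derivative so cleanly.
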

\begin{proof}
We prove~$(i)$.
From the identity,
\begin{align}
\cK
\nabla \cdot[(\phi  u)\otimes  u ] = [\nabla \cdot(\phi  u)] u+\phi[( u\cdot\nabla) u],
\label{proof_lemma_1}
\end{align}
we have
\begin{align*}
	\frac{ \partial }{ \partial t}(\phi  u)+\nabla \cdot[(\phi  u)\otimes  u ] 
	& =\frac{\partial\phi}{\partial t} u+\phi\frac{\partial  u}{\partial t} + [\nabla \cdot(\phi  u)] u + \phi [( u\cdot\nabla) u] \\
& =\left(  \frac{\partial\phi}{\partial t}+ \nabla \cdot(\phi u )   \right) u + \phi \left( \frac{ \partial  u}{ \partial t}+( u \cdot\nabla )  u \right),
\end{align*}
which completes the proof of~$(i)$.
\par
We prove $(ii)$.
\cK
Denoting the left hand side of~$(ii)$ by~$J$ and using the integration by parts formula,
\cK
we have
\begin{align}
J & = \int_{\Gamma}([(\phi  u)\otimes   u ]n)\cdot   u ds - \int_{\Omega} [(\phi u)\otimes   u] :  \nabla u \, dx
\notag \\
& = \int_{\Gamma}\phi|  u|^2  u\cdot  {  n} ds-\int_{\Omega}\phi (u\cdot\nabla)  u \cdot u\,dx,
\label{eq:I}
\end{align}
where $A : B = \sum_{i,j=1}^{{\cK 2}} A_{ij} B_{ij}$.
We, therefore, have
\begin{align*}
\qquad\qquad\qquad J & =\int_{\Omega} [\nabla\cdot(\phi   u)|  u|^2+(\phi(  u\cdot\nabla)  u)\cdot   u]\,dx && \mbox{(from~\eqref{proof_lemma_1})} \\
\qquad\qquad\qquad & =\int_{\Omega} [\nabla\cdot (\phi u)] |u|^2\,dx + \int_{\Gamma}\phi|  u|^2  u \cdot n \, ds - J && \mbox{(from~\eqref{eq:I})},
\end{align*}
which implies the desired result of~$(ii)$.
\end{proof}
\begin{proof}[Proof of Theorem~\ref{th1}]
Differentiating~\eqref{te} with respect to $t$, we get
\begin{align}
\frac{d}{dt} E(t) = \frac{d}{dt} E_1(t) + \frac{d}{dt} E_2(t).
\label{proof_thm_1}
\end{align}
\cK
We compute $\fz{d}{dt} E_1(t)$ and $\fz{d}{dt} E_2(t)$ separately.
\par
Firstly, $\fz{d}{dt} E_1(t)$ is computed as follows.
From Lemma~\ref{lem1}-$(i)$ and the first equation of~\eqref{prob}, we have
\cK
\begin{align*}
\phi \Bigl[ \frac{ \partial  u}{ \partial t}+( u \cdot\nabla )  u \Bigr]
=\frac{ \partial }{ \partial t}(\phi  u)+\nabla \cdot[(\phi  u)\otimes  u],
\end{align*}
which implies
\begin{equation}\label{Eq1.5}
\rho \Bigl[ \frac{ \partial }{ \partial t}(\phi  u)+\nabla \cdot[(\phi  u)\otimes  u ] \Bigr] - 2 \mu \nabla \cdot \bigl[ \phi D(  u) \bigr] + \rho g\phi\nabla \eta=0.
\end{equation}
Multiplying~\eqref{Eq1.5} by $ {u}$ and integrating with respect to $x$ over $\Omega$, we get
\begin{align}
	& \rho \int_{\Omega}\Bigl[ \frac{\partial}{\partial t}(\phi  u) \Bigr] \cdot  u~dx + \rho\int_{\Omega} \Bigl[\nabla \cdot [(\phi  u)\otimes  u ] \Bigr] \cdot  u~dx - 2\mu \int_{\Omega}\Bigl[ \nabla \cdot ( \phi D( u) ) \Bigr] \cdot  u~dx \notag \\
	& + \rho g \int_{\Omega}\phi\nabla \eta \cdot  u~dx = 0.
	\label{eq:proof_thm1_1}
\end{align}
\cK
From the equation~\eqref{eq:proof_thm1_1} above and the next two identities:
\begin{align*}
& \rho \int_{\Omega} \Bigl[ \frac{ \partial }{ \partial t}(\phi  u) \Bigr] \cdot  u \, dx + \rho \int_{\Omega} \Bigl[ \nabla \cdot[(\phi  u)\otimes  u ] \Bigr] \cdot {  u}~dx \\
& \quad = \rho \int_{\Omega} \Bigl( \prz{\phi}{t} | u|^2 + \phi \prz{ {u}}{t} \cdot  u \Bigr)\, dx + \frac{\rho}{2} \int_{\Gamma}\phi| u|^2 u\cdot {  n} \ ds + \frac{\rho}{2}\int_{\Omega} \bigl[ \nabla\cdot(\phi   u) \bigr]|{  u}|^2 \, dx \\
& \qquad\qquad\qquad\qquad\qquad\qquad\qquad\qquad\qquad\qquad\qquad\qquad\quad \mbox{(from Lemma~\ref{lem1}-$(ii)$)} \\
& \quad = \rho \int_{\Omega} \Bigl( \frac{1}{2} \prz{\phi}{t} | u|^2 + \phi  u \cdot \prz{ {u}}{t} \Bigr) dx + \frac{\rho}{2} \int_{\Gamma}\phi| u|^2 u\cdot {  n} \, ds \ \ \mbox{(from the first  eq. of~\eqref{prob})} \\
& \quad =\frac{d}{dt} \Bigl[ \frac{\rho}{2}\int_{\Omega}\phi| u|^2~dx \Bigr] + \frac{\rho}{2} \int_{\Gamma}\phi| u|^2 u\cdot {  n} \, ds 
=\frac{d}{dt} E_1(t) - {\cK I_1(t; \Gamma),}
\end{align*}
\begin{align*}
- 2 \mu \int_{\Omega}\Bigl[ \nabla \cdot (\phi D( u)) \Bigr] \cdot  u~dx
& = - 2\mu \int_\Gamma \phi \bigl[ D( u) n \bigr] \cdot u~ds + 2\mu \int_\Omega \phi | D( u) |^2~dx \\
& = - I_3(t; \Gamma) - I_4(t; \Omega),
\end{align*}
we obtain
\begin{align}
\fz{d}{dt} E_1 (t) &= \cK I_1(t; \Gamma) + I_3(t; \Gamma) + I_4(t; \Omega) - \rho g \int_{\Omega}  \nabla\eta\cdot (\phi u)~dx.
\label{Eq1.7}
\end{align}
\par
Secondly, $\fz{d}{dt} E_2(t)$ is computed as follows:
\begin{align}
	\fz{d}{dt} E_2(t)
	& = \frac{d}{dt} \Bigl[ \frac{\rho g}{2} \int_{\Omega}|\eta|^2 dx \Bigr] \notag\\
	& = \rho g \int_{\Omega} \eta \prz{\eta}{t} \, dx \notag\\
	& = \rho g \int_{\Omega} \eta \prz{\phi}{t} \, dx 
\hspace*{115pt}
( \text{from the third  eq. of (\ref{prob})}) \notag\\
	&= \rho g \int_{\Omega} \eta \bigl[ -\nabla\cdot(\phi  u) \bigr] dx 
	\qquad\qquad\qquad\qquad\ \mbox{(from the first  eq. of~\eqref{prob})} \notag\\
	&= - \rho g \int_{\Omega} \nabla\cdot(\eta \phi  u) dx + \rho g \int_{\Omega} \nabla\eta\cdot (\phi u) \, dx \notag\\
	&= {\cK I_2(t; \Gamma)} + \rho g \int_{\Omega} \nabla\eta\cdot (\phi u) \, dx.
\label{Eq1.8}
\end{align}
The result~\eqref{ED} follows by adding~\eqref{Eq1.7} and~\eqref{Eq1.8} and recalling~\eqref{proof_thm_1}.
\end{proof}
\cK
\begin{corollary}\label{cor1}
$(i)$
Suppose that a pair of smooth functions $(\phi, u): \ol{\Omega} \times [0,T] \to \mathbb{R} \times \mathbb{R}^2$ satisfies~\eqref{prob} with {\cK \eqref{0dbc}-\eqref{tbc}.}
Then, we have
\begin{align}
\frac{d}{dt}E(t) 
& \cK = \sum_{i=1}^3 I_i (t; \Gamma_T) + I_4(t; \Omega).
\label{EDT}
\end{align}
$(ii)$
Furthermore, if $\Gamma=\Gamma_D\cup\Gamma_S$ and $\phi(x,t) > 0~((x,t) \in \overline{\Omega}\times [0, T])$, we have
\begin{equation}\label{EDT3}
\frac{d}{dt}E(t)= {\cK I_4(t; \Omega)} \le 0.
\end{equation}
\end{corollary}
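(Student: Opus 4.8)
The plan is to derive both parts directly from the master equality~\eqref{ED} of Theorem~\ref{th1}, which already isolates the three boundary integrals $I_1, I_2, I_3$ and the non-positive bulk term $I_4$. Since the boundary decomposes as $\overline{\Gamma} = \overline{\Gamma}_D \cup \overline{\Gamma}_S \cup \overline{\Gamma}_T$ with pairwise disjoint open pieces, each line integral splits additively,
\begin{align*}
I_i(t; \Gamma) = I_i(t; \Gamma_D) + I_i(t; \Gamma_S) + I_i(t; \Gamma_T), \qquad i = 1, 2, 3.
\end{align*}
The entire content of part~$(i)$ is therefore the claim that the three integrals over $\Gamma_D$ and the three over $\Gamma_S$ all vanish, leaving only the $\Gamma_T$ contributions together with $I_4(t; \Omega)$.

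On $\Gamma_D$ the Dirichlet condition~\eqref{0dbc} gives $u = 0$, so the integrands of $I_1$, $I_2$, $I_3$ --- which carry the factors $|u|^2\, u \cdot n$, $u \cdot n$, and $[D(u) n] \cdot u$ respectively, each linear or higher order in $u$ --- vanish pointwise, whence $I_i(t; \Gamma_D) = 0$. On $\Gamma_S$ the slip condition~\eqref{sbc} provides $u \cdot n = 0$, which immediately kills $I_1$ and $I_2$ since both integrands contain the factor $u \cdot n$. The one genuinely non-trivial step --- and the only place any real argument is required --- is $I_3$ on $\Gamma_S$: here I would use the first half of~\eqref{sbc}, namely $(D(u) n) \times n = 0$, to conclude that $D(u) n$ is parallel to $n$, say $D(u) n = \lambda n$ for some scalar field $\lambda$ on $\Gamma_S$, so that $[D(u) n] \cdot u = \lambda \, (n \cdot u) = 0$ by the second half of~\eqref{sbc}. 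Thus $I_3(t; \Gamma_S) = 0$, which completes part~$(i)$.

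Part~$(ii)$ is then immediate: under the hypothesis $\Gamma = \Gamma_D \cup \Gamma_S$ the transmission boundary is empty, so $I_i(t; \Gamma_T) = 0$ for $i = 1, 2, 3$, and~\eqref{EDT} collapses to $\frac{d}{dt} E(t) = I_4(t; \Omega)$. Recalling $I_4(t; \Omega) = -2\mu \int_{\Omega} \phi |D(u)|^2 \, dx$ and invoking $\mu > 0$, the positivity assumption $\phi > 0$, and $|D(u)|^2 \ge 0$, the integrand is non-negative, giving $I_4(t; \Omega) \le 0$. No serious obstacle arises in this argument; the only point demanding care is the geometric reduction for $I_3$ on $\Gamma_S$, where the two components of the slip condition must be combined to force the integrand to zero.
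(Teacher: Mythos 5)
Your proof is correct and follows essentially the same route as the paper: the key step in both is to use $(D(u)n)\times n=0$ to write $D(u)n = \lambda n$ on $\Gamma_S$ and then kill the $I_3$ integrand via $u\cdot n = 0$, with the $\Gamma_D$ and remaining $\Gamma_S$ contributions vanishing trivially and part $(ii)$ following from $\Gamma_T=\varnothing$ together with the sign of $I_4$. The paper merely leaves implicit the additive splitting of the boundary integrals and the vanishing on $\Gamma_D$, which you spell out.
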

\begin{proof}
On $\Gamma_S$, from the first equation of~\eqref{sbc}, there exists a scalar function $w: \ol{\Omega} \times [0,T] \to \mathbb{R}$ such that $D(u) n = w(x,t) n$, which implies
$$
\bigl[D( u) n \bigr] \cdot u = (w n)\cdot u=w( u\cdot n) = 0.
$$
\cK
Hence, the result~\eqref{EDT} is established from Theorem~\ref{th1} with~\eqref{0dbc} and~\eqref{sbc}.
\par
When $\Gamma = \Gamma_D \cup \Gamma_S$, i.e., $\Gamma_T = \varnothing$, the identity~\eqref{EDT} derives~\eqref{EDT3}.
\end{proof}
\begin{theorem}\label{th2}
\cK
Suppose that a pair of smooth functions $(\phi, u): \ol{\Omega} \times [0,T] \to \mathbb{R} \times \mathbb{R}^2$ satisfies~\eqref{prob} with~\eqref{0dbc}-\eqref{tbc}, and an inequality
\begin{align}
\phi(x,t)>0, \quad (x,t) \in \overline{\Gamma}_T \times [0, T],
\label{cond:phi_positive}
\end{align}
and that there exists $\alpha \in (0,1)$ such that
\begin{align}
\eta (x,t) & \ge -\alpha \zeta(x), \quad x\in\overline{\Gamma}_T,\ t\in [0, T],
\label{cond:eta_zeta} \\
0 & < c_0 \le \sqrt{\fz{2}{\alpha}} \, (1-\alpha).
\label{cond:c_0}
\end{align}
\cK
Then, we have the following estimates:
\begin{equation}\label{eq_th2_I1I2}
I_1(t; \Gamma_T) + I_2(t; \Gamma_T) \le 0,
\end{equation}
in particular,
\begin{equation}\label{eq_th2}
\frac{d}{dt}E(t) \le \cK I_3(t; \Gamma_T).
\end{equation}
\end{theorem}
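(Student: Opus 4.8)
The plan is to reduce both assertions to a single pointwise inequality on $\Gamma_T$, obtaining the second claim from the first through Corollary~\ref{cor1}-(i). First I would evaluate the normal velocity and the speed on the transmission boundary. Substituting the boundary condition~\eqref{tbc}, namely $u = c(\eta/\phi)n$, and using $|n|=1$, gives $u\cdot n = c\eta/\phi$ and $|u|^2 = c^2\eta^2/\phi^2$ on $\Gamma_T$. Inserting these into the definitions of $I_1(t;\Gamma_T)$ and $I_2(t;\Gamma_T)$ and collecting terms, both integrands become proportional to a common factor, so that
\[
I_1(t;\Gamma_T) + I_2(t;\Gamma_T) = -\rho\int_{\Gamma_T} c\,\eta^2\Bigl[\frac{c^2\eta}{2\phi^2} + g\Bigr]\,ds.
\]
Since $\rho>0$, $c = c_0\sqrt{g\zeta}>0$ and $\eta^2\ge 0$, the estimate~\eqref{eq_th2_I1I2} reduces to showing that the bracket is nonnegative pointwise on $\Gamma_T$, i.e.\ $\frac{c^2\eta}{2\phi^2} + g \ge 0$.

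The core of the argument, and the main obstacle, is this pointwise inequality, which is exactly where the calibration~\eqref{cond:c_0} of $c_0$ enters. I would nondimensionalize by setting $s \defeq \eta/\zeta$; then $\phi = \eta+\zeta = (1+s)\zeta$, and substituting $c^2 = c_0^2 g\zeta$ turns the bracket into $g\bigl[1 + \tfrac{c_0^2}{2}\,\tfrac{s}{(1+s)^2}\bigr]$. Assumption~\eqref{cond:eta_zeta} reads $s \ge -\alpha$, and since $\alpha\in(0,1)$ this forces $1+s \ge 1-\alpha>0$ (consistent with~\eqref{cond:phi_positive}), so $(1+s)^2>0$ throughout. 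For $s\ge 0$ the bracket is manifestly positive, so only $s\in[-\alpha,0)$ is at issue; there the task is to bound the negative quantity $s/(1+s)^2$ from below, equivalently to bound $\psi(s)\defeq -s/(1+s)^2$ from above. A short computation gives $\psi'(s) = (s-1)/(1+s)^3 < 0$ on $[-\alpha,0)$, so $\psi$ is decreasing and attains its maximum at the left endpoint, $\psi(-\alpha) = \alpha/(1-\alpha)^2$. Hence the bracket is nonnegative on the whole admissible range precisely when $\frac{c_0^2}{2}\cdot\frac{\alpha}{(1-\alpha)^2}\le 1$, i.e.\ $c_0 \le \sqrt{2/\alpha}\,(1-\alpha)$, which is exactly hypothesis~\eqref{cond:c_0}. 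This establishes~\eqref{eq_th2_I1I2}.

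Finally, for~\eqref{eq_th2} I would invoke Corollary~\ref{cor1}-(i), which gives $\frac{d}{dt}E(t) = \sum_{i=1}^3 I_i(t;\Gamma_T) + I_4(t;\Omega)$. Since $I_4(t;\Omega) = -2\mu\int_\Omega \phi|D(u)|^2\,dx \le 0$ by $\mu>0$ and the nonnegativity of $\phi$, and $I_1(t;\Gamma_T)+I_2(t;\Gamma_T)\le 0$ by the first part, discarding these two nonpositive contributions leaves $\frac{d}{dt}E(t) \le I_3(t;\Gamma_T)$, as claimed. The only point requiring care beyond the pointwise estimate is the sign of $I_4$, which needs $\phi\ge 0$ in $\Omega$ rather than merely on $\overline{\Gamma}_T$; this is the natural physical positivity of the total height, and I would either carry it as a standing assumption or restrict to the regime where it holds, mirroring Corollary~\ref{cor1}-(ii).
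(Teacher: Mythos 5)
Your proposal is correct and follows essentially the same route as the paper's proof: substitute the transmission condition~\eqref{tbc} into $I_1+I_2$ to obtain $-\rho g\int_{\Gamma_T} c\eta^2\bigl[1+\tfrac{c_0^2}{2}\tfrac{\zeta\eta}{(\zeta+\eta)^2}\bigr]\,ds$, bound $s/(1+s)^2$ from below by its value at $s=-\alpha$ via the sign of the derivative and hypothesis~\eqref{cond:c_0}, then conclude through Corollary~\ref{cor1}-(i) and the non-positivity of $I_4$. Two minor points where you are in fact more careful than the paper: your case split $s\ge 0$ versus $s\in[-\alpha,0)$ avoids the paper's unjustified intermediate claim $\eta/\zeta\le 1$ (the paper invokes monotonicity of $r/(1+r)^2$ on $[-1,1]$ although no upper bound on $\eta$ is assumed), and your observation that the sign of $I_4$ requires $\phi\ge 0$ on all of $\Omega$, not merely on $\overline{\Gamma}_T$, makes explicit an assumption the paper leaves implicit.
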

\begin{proof}
\cK
From~\eqref{EDT}, we prove~\eqref{eq_th2_I1I2} which implies~\eqref{eq_th2}, since $I_4(t; \Omega)$ is always non-positive.
\cK
We have
\begin{align*}
	\cK \sum_{i=1}^2 I_i(t; \Gamma_T)
	& = - \rho\int_{\Gamma_T}\phi ( u \cdot {  n}) \Bigl[ g\eta+ \frac{1}{2} |u|^2 \Bigr] \, ds \\
	& = - \rho\int_{\Gamma_T}\phi c\frac{\eta}{\phi}\Bigl[ g \eta+ \frac{1}{2}c_0^2g\zeta\frac{\eta^2}{\phi^2} \Bigr] \,ds \\
	& =-\rho g\int_{\Gamma_T} c\eta^2 \Bigl[ 1+\frac{c_0^2}{2}\frac{\zeta\eta}{(\zeta+\eta)^2} \Bigr] \, ds.
\end{align*}
Let $f(r) \defeq r/(1+r)^2$.
From $f^\prime (r)=(1-r)/(1+r)^3$, it holds that
$f(r_1) \le f(r_2)$, $-1 \le r_1 \le r_2 \le 1$.
Since $-1 \le  -\alpha \le \eta/\zeta \le 1$, we obtain $f(-\alpha) \le f (\eta/\zeta)$, i.e.,
\[
- \fz{\alpha}{(1-\alpha)^2} \le \fz{\eta \zeta}{(\zeta + \eta)^2},
\]
which implies that 
\[
	\cK
	\sum_{i=1}^2 I_i(t; \Gamma_T)
	\cK
	\le -\rho g\int_{\Gamma_T}c\eta^2\left\{1-\frac{c_0^2\alpha}{2(1-\alpha)^2} \right\}ds\le 0
\]
from the condition on $c_0$, i.e., $0 < c_0 \le \sqrt{2/\alpha} \, (1-\alpha)$.
\end{proof}
\cK
\begin{remark}
From Theorem~\ref{th2}, we can say that the transmission boundary condition~\eqref{tbc} is reasonable in the sense of~\eqref{eq_th2_I1I2} under the conditions~\eqref{cond:phi_positive}-\eqref{cond:c_0} to be satisfied in practical computation.
Although the sign of~$\fz{d}{dt}E(t)$ is as yet unknown due to~$I_3 (t; \Gamma_T)$, Theorem~\ref{th2} has meaningful for the energy estimate of the SWEs with the transmission boundary condition.
In fact, we observe numerically that $I_2(t; \Gamma)$ is dominant and $\sum_{i=1}^3 I_i(t; \Gamma)$ is negative, while $I_1(t; \Gamma)$ and $I_3(t; \Gamma)$ may be positive, cf. Subsection~\ref{subsec4.2}.
\end{remark}
\begin{remark}
The condition~\eqref{cond:c_0} is not strict in the practical computation, where $\alpha$ and $c_0$ are chosen typically as, e.g., $\alpha = 0.01$ and $c_0 = 0.9$~\cite{Kanayamatsunami}.
These satisfy~\eqref{cond:c_0}, since $\sqrt{2/\alpha} \, (1-\alpha) \approx 14$.
\end{remark}
\cK
\section{Numerical results by a finite difference scheme}\label{sec4}
In this section, we present numerical results by a finite difference scheme for problem  \eqref{prob}--\eqref{IC} with $\Omega=(0,L)^2$ for a positive constant $L$, $T=100$, $\zeta=a \cK > 0$,  $\mu=10^3$, $g=9.8\times10^{-3}$, $\rho=10^{12}$, $\eta^0=c_1 {\cK \exp(-100|x-p|^2)} \ (c_1>0, p \in\Omega)$.
These values are in km~(length), kg~(mass) and s~(time).
We set $\Gamma_S=\emptyset$ for simplicity.
\cK
We consider five cases of~$\Gamma_T$: 
\begin{align*}
(i) & \ \Gamma_T=\emptyset, 
\qquad
(ii) \ \Gamma_T=\Gamma_{\rm top}, 
\qquad
(iii) \ \Gamma_T=\Gamma_{\rm top}\cup\Gamma_{\rm right}\cup \{(L, L)\}, \\
(iv) & \ \Gamma_T=\Gamma_{\rm top}\cup\Gamma_{\rm right}\cup\Gamma_{\rm left}\cup \{(L, L)\}\cup \{(0, L)\}, 
\qquad
(v) \ \Gamma_T=\Gamma,
\end{align*}
for
$\Gamma_{\rm top} \defeq \{(x_1,L);0<x_1<L\}$,
$\Gamma_{\rm right} \defeq \{(L, x_2);0<x_2<L\}$,
$\Gamma_{\rm left} \defeq \{(0, x_2);0<x_2<L\}$,
and set $\Gamma_D=\Gamma\setminus \Gamma_T$. For the above cases $(ii)$-$(v)$, $c_0=0.9$ is taken  following~\cite{Kanayamatsunami}. 
\cK
\subsection{A finite difference scheme}\label{subsec4.1}
 Let $N \in\mathbb{N}$ and $\Delta t > 0$ be given, and let $h \defeq L/N$ and $ N_T \defeq \lfloor{{T}/{\Delta t}}\rfloor$, $x_{i,j} \defeq (ih,jh)^T \in \mathbb{R}^2~(i, j \in {\mathbb Z})$, $\Omega_h \defeq \{x_{i,j}\in \Omega; i, j \in {\mathbb Z}\}$, $\overline{\Omega}_h \defeq \{x_{i,j} \in \overline{\Omega}; i, j \in {\mathbb Z}\}$, $\Gamma_{hD} \defeq \{x_{i,j}\in \overline{\Gamma}_{D}; i, j \in {\mathbb Z}\}$, $\Gamma_{hT} \defeq \{x_{i,j}\in {\Gamma}_{T}; i, j \in {\mathbb Z}\}$.
Let $u_h^0: \overline{\Omega}_h \to \mathbb{R}^2$ and $\phi_h^0: \overline{\Omega}_h \to \mathbb{R}$ be given approximate functions of $u^0$ and $\phi^0$, respectively.
Our finite difference scheme is to find $\{(\phi_h^k,  u_h^k)(x_{i,j});~x_{i,j} \in \overline{\Omega}_h, k=1, \ldots, N_T\}$ such that, for $k=0, \ldots, N_T-1$,
\begin{align}
\left\{
\ 
\begin{aligned}
& \frac{\phi^{k+1}_h-\phi^{k}_h}{\Delta t}+ \left(\nabla_h\cdot  u^k_h\right)\phi^{k}_h +  u^k_h\cdot\nabla_h^{up}\phi^{k}_h = 0 && \text{on} \ \overline{\Omega}_h,\\
& \rho\phi^k_h\left(\frac{ u^{k+1}_h- u^{k}_h}{\Delta t}+(u^{k}_h\cdot\nabla_h)u^{k}_h \right) \\
& \qquad\qquad -2\mu\nabla_h\cdot\left(\phi^k_h {\cK D_h}( u^k_h)\right) + \rho g \phi^k_h\nabla_h \eta^k_h = 0 && \text{on} \ {\Omega}_h,\\
& \phi^k_h = \eta^k_h + \zeta && \text{on} \ {\Omega}_h,\\
& u_h^{k+1} =  u_D^{k+1} && \text{on} \ {\Gamma}_{hD},\\
& u_h^{k+1} = c \frac{\phi_h^{k+1}-\zeta}{\phi_h^{k+1}}{  n} && \text{on} \ {\Gamma}_{hT},
\end{aligned}
\right.
\label{scheme}
\end{align}
\cK
where $\nabla_h = (\nabla_{h1}, \nabla_{h2})^T$ and $\nabla_h^{up} = (\nabla_{h1}^{up}, \nabla_{h2}^{up})^T$ represent the (standard) central and upwind (with respect to $u_h^k$) difference operators, respectively,
and $D_h(v_h) \defeq [ (\nabla_h v_h) + (\nabla_h v_h)^T]/2: \ol{\Omega}_h \to \R^{2 \times 2}_{\rm sym}$ for $v_h: \ol{\Omega}_h \to \R^2$.
If the required points for the operators~$\nabla_h$ and~$\nabla_h^{up}$ are not in~$\ol{\Omega}_h$, one sided difference is used.
\cK
\subsection{Numerical results for five cases of boundary settings}\label{subsec4.2}
Numerical simulations are carried out {\cK by scheme~\eqref{scheme} for} $L=10$, $a=1$, $u^0=0$, $c_1=0.01$, ${  p}=(5,5)^T$, $N = 1,000$ and $\Delta t = 0.05~(N_T=2,000)$.
\cK
Figure~\ref{Simulatedresult} shows color contours of~$\eta_h^k$ for $k = 0$, $500$, $1,000$, $1,500$ and $2,000$, which correspond to times $t=0, 25, 50, 75$ and $100$, respectively,
\cK
where~$(i)$-$(v)$ represent simulated results for the cases~$(i)$-$(v)$ stated at the beginning of this section.
It can be clearly found that the artificial reflection is almost removed on the transmission boundaries {\cK for the cases $(ii)$-$(v)$.} 
\begin{figure}[!htbp]
\begin{minipage}[b]{0.08\linewidth}
\centering
{$(i)$}
\vspace{1cm}
\end{minipage}
\begin{minipage}[b]{0.155\linewidth}
\centering
\includegraphics[width=0.99\linewidth]{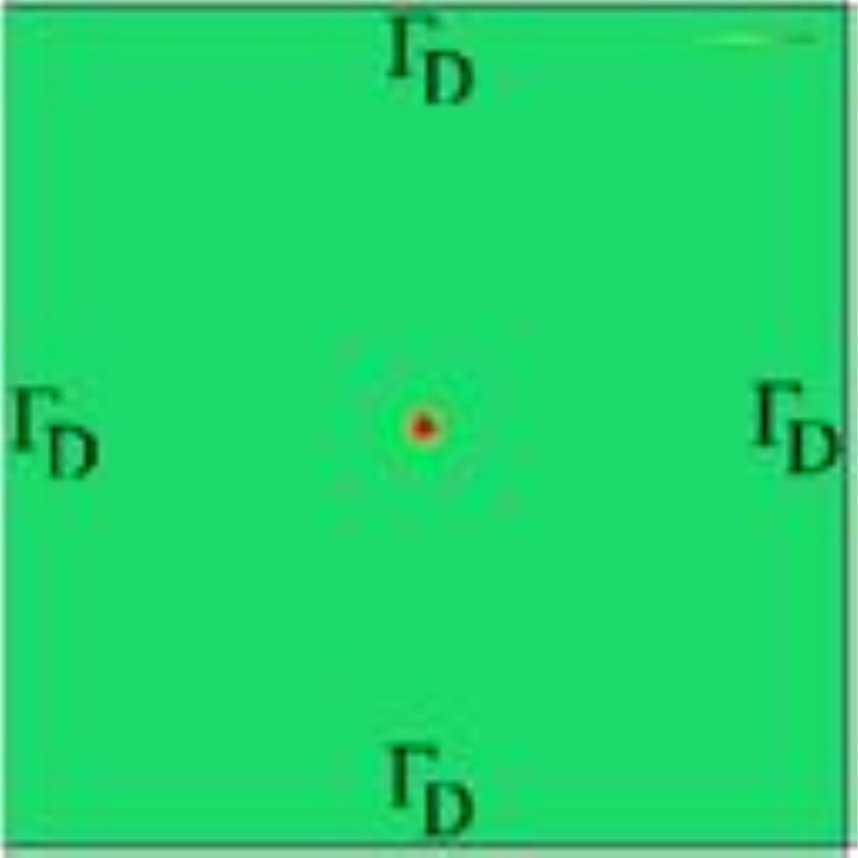}
{$t=0$}
\end{minipage}
\begin{minipage}[b]{0.155\linewidth}
\centering
\includegraphics[width=0.99\linewidth]{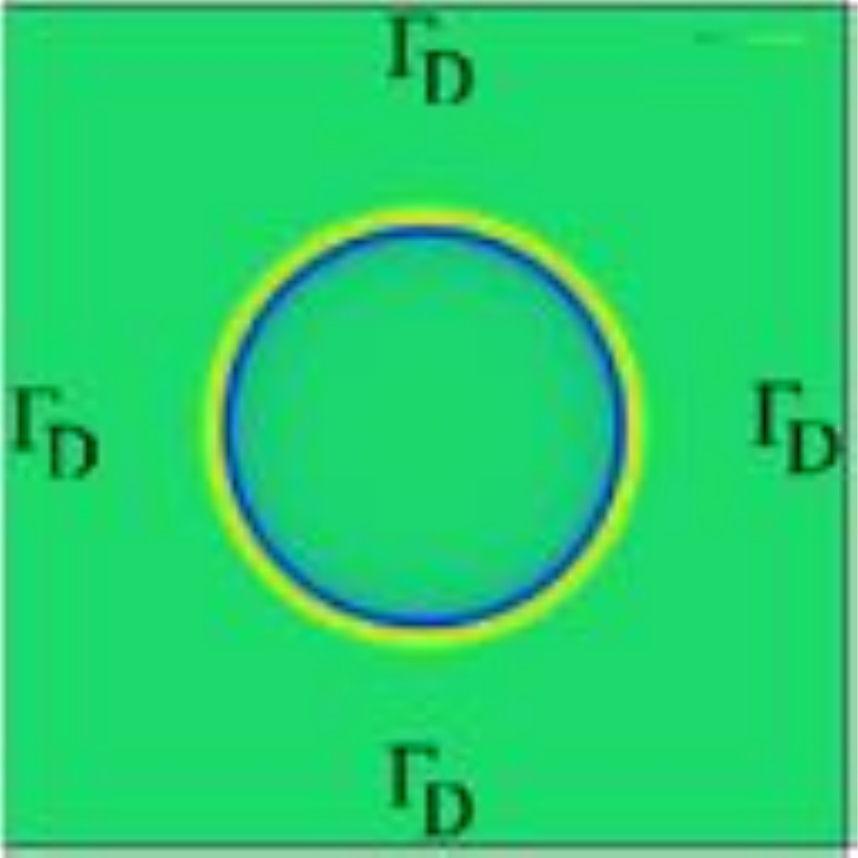}
{$t=25$}
\end{minipage}
\begin{minipage}[b]{0.155\linewidth}
\centering
\includegraphics[width=0.99\linewidth]{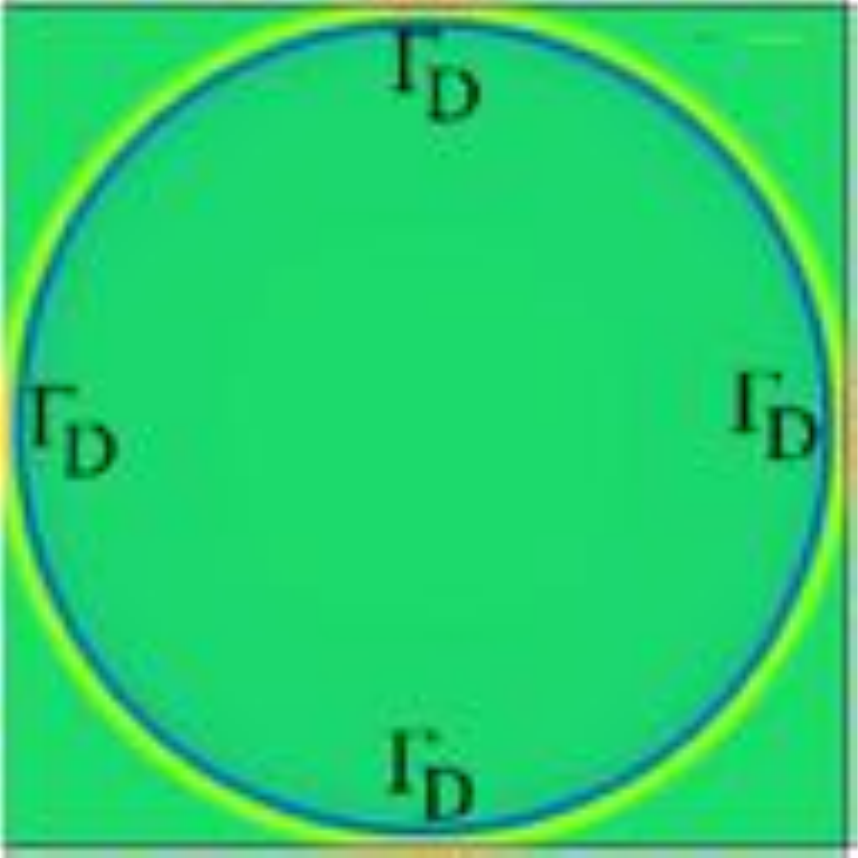}
{$t=50$}
\end{minipage}
\begin{minipage}[b]{0.155\linewidth}
\vspace{0.5cm}
\includegraphics[width=0.99\linewidth]{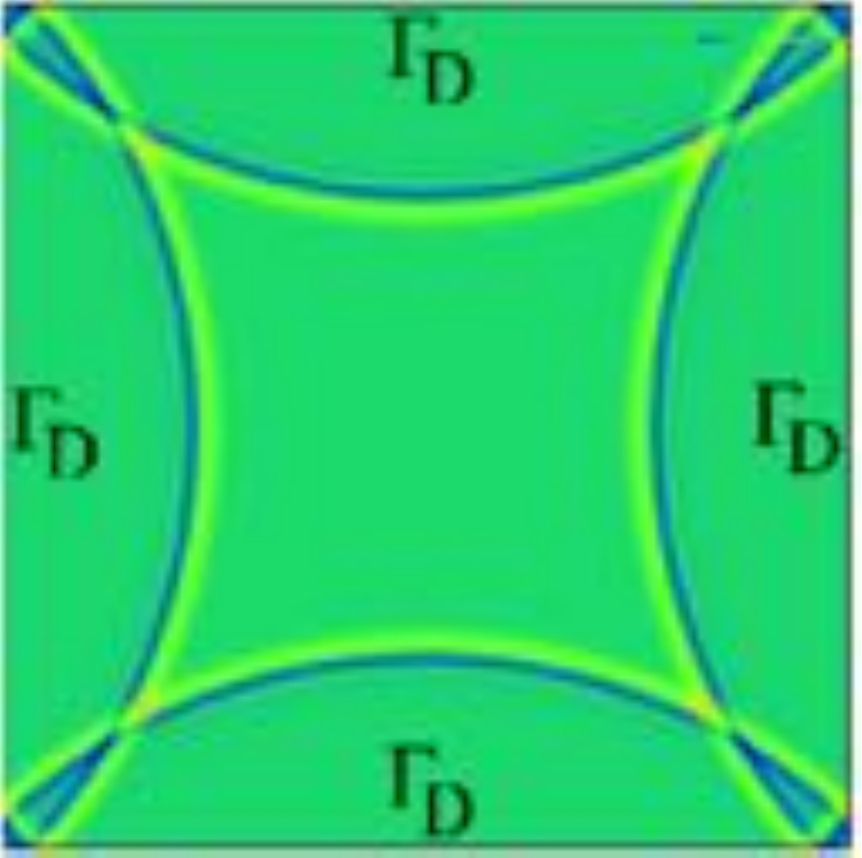}
{$t=75$}
\end{minipage}
\begin{minipage}[b]{0.155\linewidth}
\vspace{0.5cm}
\centering
\includegraphics[width=0.99\linewidth]{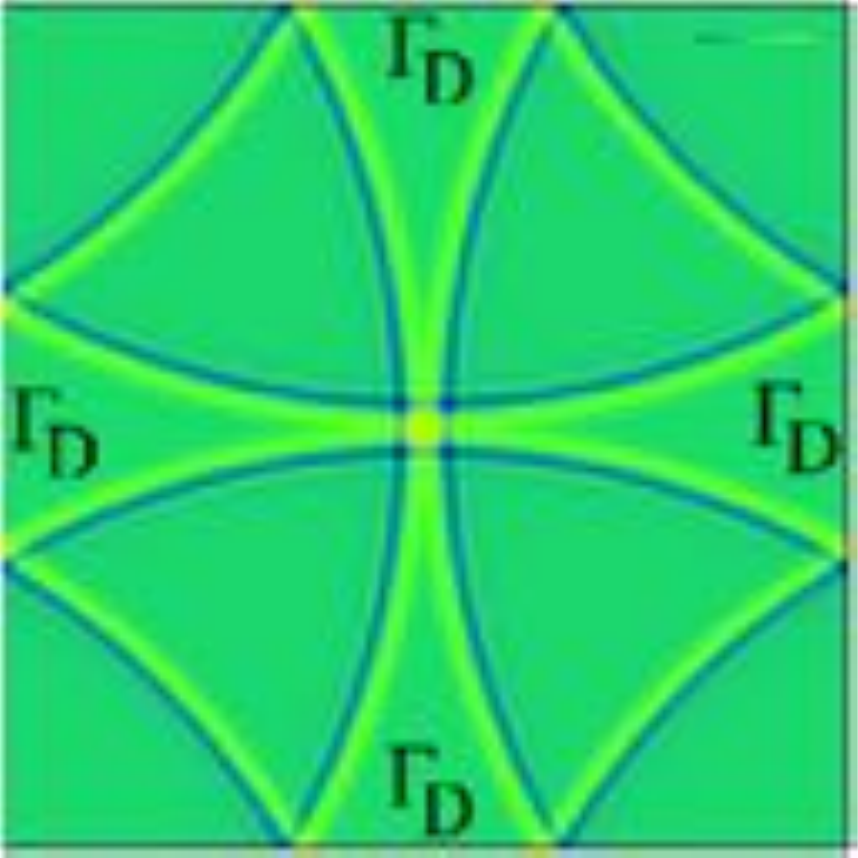}
{$t=100$}
\end{minipage}
\begin{minipage}[b]{0.062\linewidth}
\vspace{0.5cm}
\centering
\includegraphics[width=0.99\linewidth]{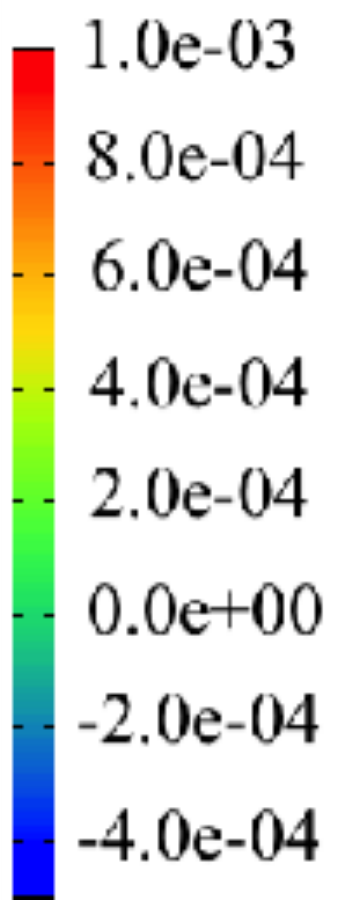}
\vspace{0.00cm}
\end{minipage}

\begin{minipage}[b]{0.08\linewidth}
\centering
{$(ii)$}
\vspace{1cm}
\end{minipage}
\begin{minipage}[b]{0.155\linewidth}
\centering
\includegraphics[width=0.99\linewidth]{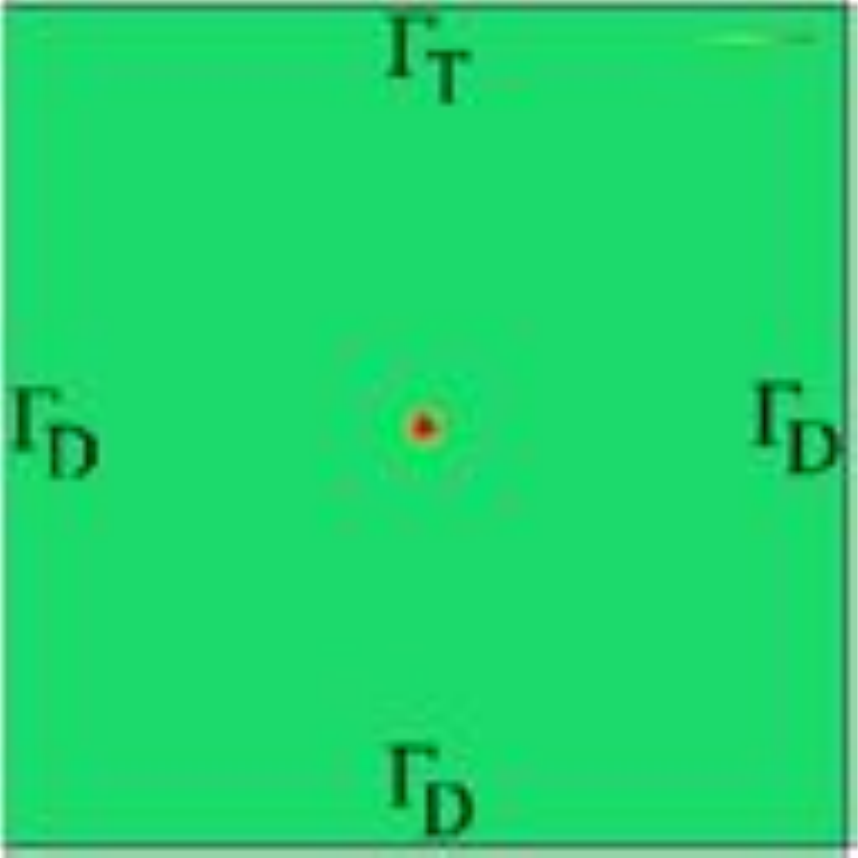}
{$t=0$}
\end{minipage}
\begin{minipage}[b]{0.155\linewidth}
\centering
\includegraphics[width=0.99\linewidth]{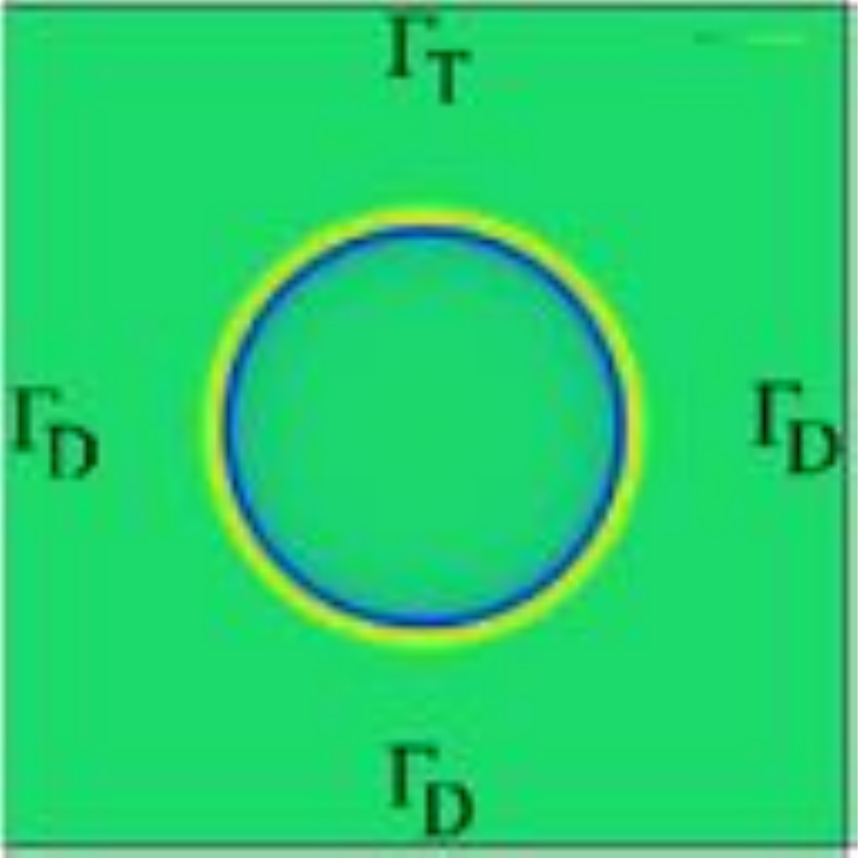}
{$t=25$}
\end{minipage}
\begin{minipage}[b]{0.155\linewidth}
\centering
\includegraphics[width=0.99\linewidth]{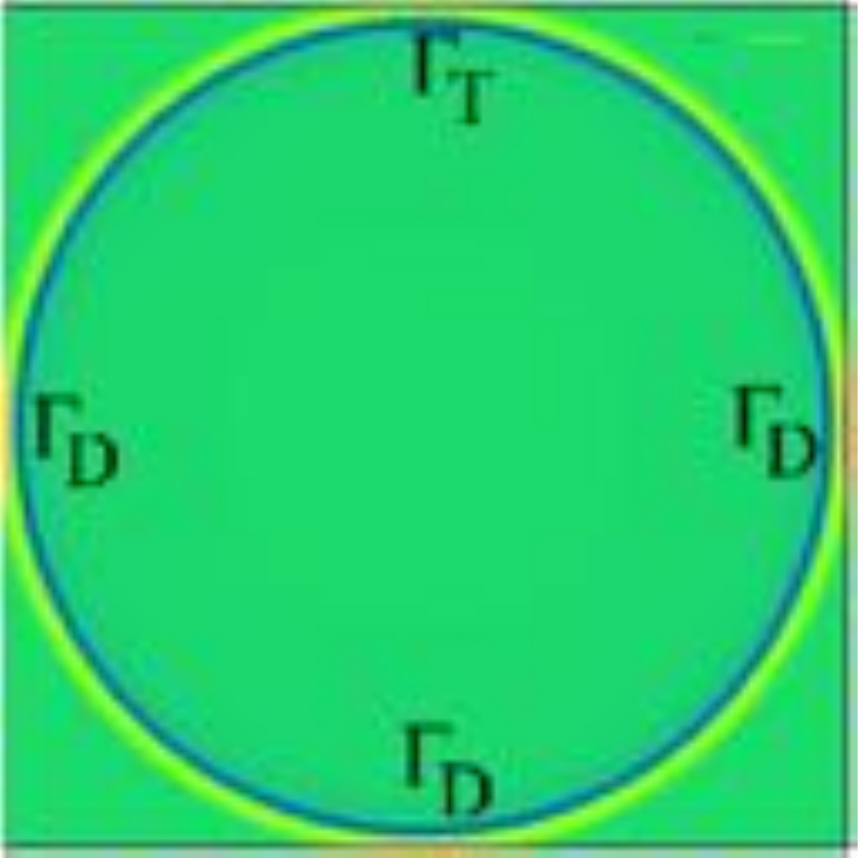}
{$t=50$}
\end{minipage}
\begin{minipage}[b]{0.155\linewidth}
\vspace{0.5cm}
\centering
\includegraphics[width=0.99\linewidth]{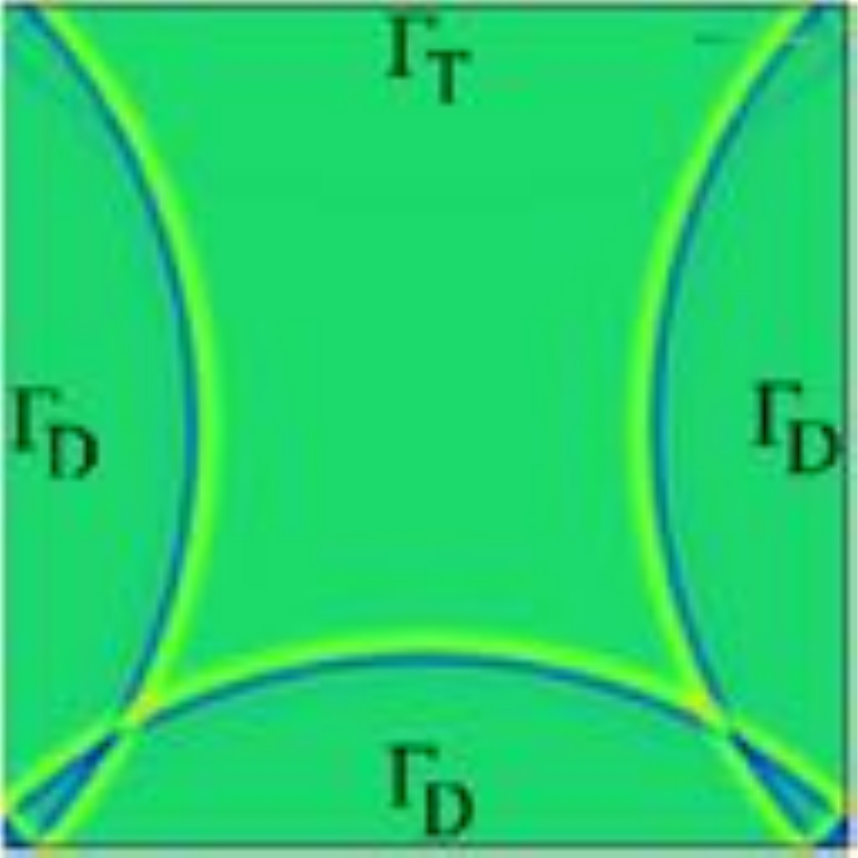}
{$t=75$}
\end{minipage}
\begin{minipage}[b]{0.155\linewidth}
\vspace{0.5cm}
\centering
\includegraphics[width=0.99\linewidth]{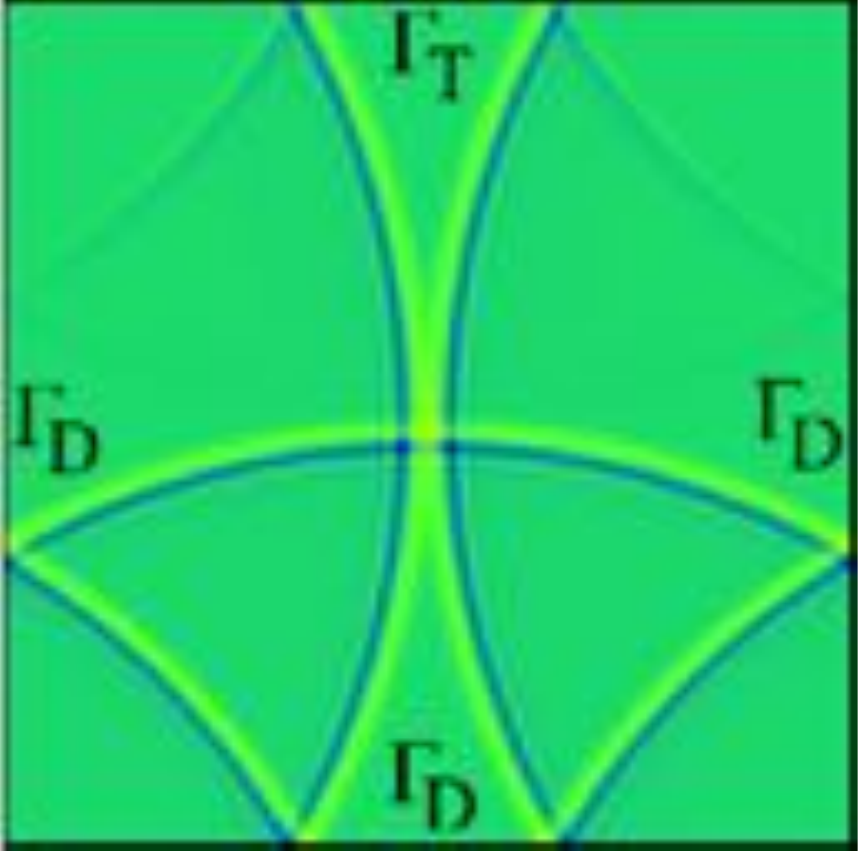}
{$t=100$}
\end{minipage}
\begin{minipage}[b]{0.062\linewidth}
\vspace{0.5cm}
\centering
\includegraphics[width=0.99\linewidth]{Colour_bar}
\vspace{0.00cm}
\end{minipage}
  
\begin{minipage}[b]{0.08\linewidth}
\centering
{$(iii)$}
\vspace{1cm}
\end{minipage}
\begin{minipage}[b]{0.155\linewidth}
\centering
\includegraphics[width=0.99\linewidth]{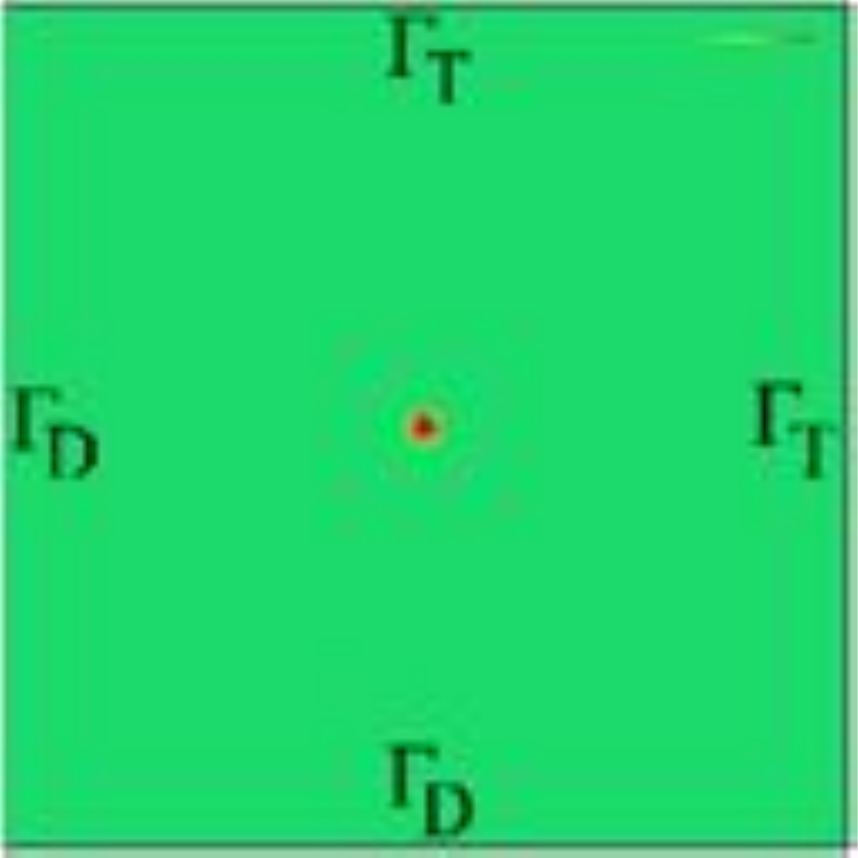}
{$t=0$}
\end{minipage}
\begin{minipage}[b]{0.155\linewidth}
\centering
\includegraphics[width=0.99\linewidth]{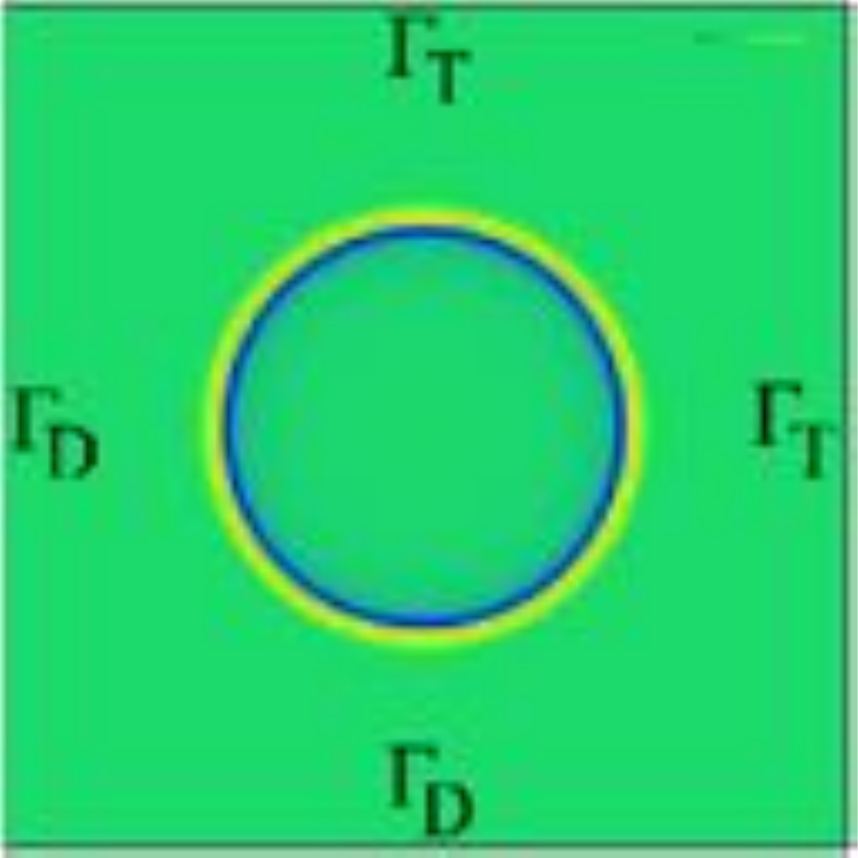}
{$t=25$}
\end{minipage}
\begin{minipage}[b]{0.155\linewidth}
\centering
\includegraphics[width=0.99\linewidth]{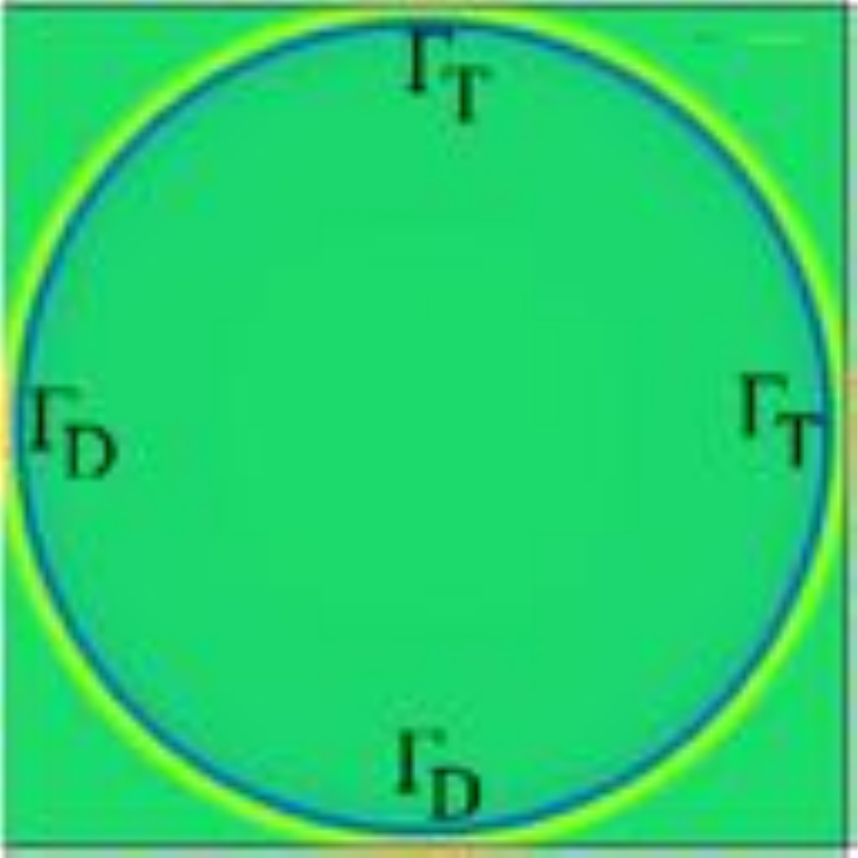}
{$t=50$}
\end{minipage}
\begin{minipage}[b]{0.155\linewidth}
\vspace{0.5cm}
\centering
\includegraphics[width=0.99\linewidth]{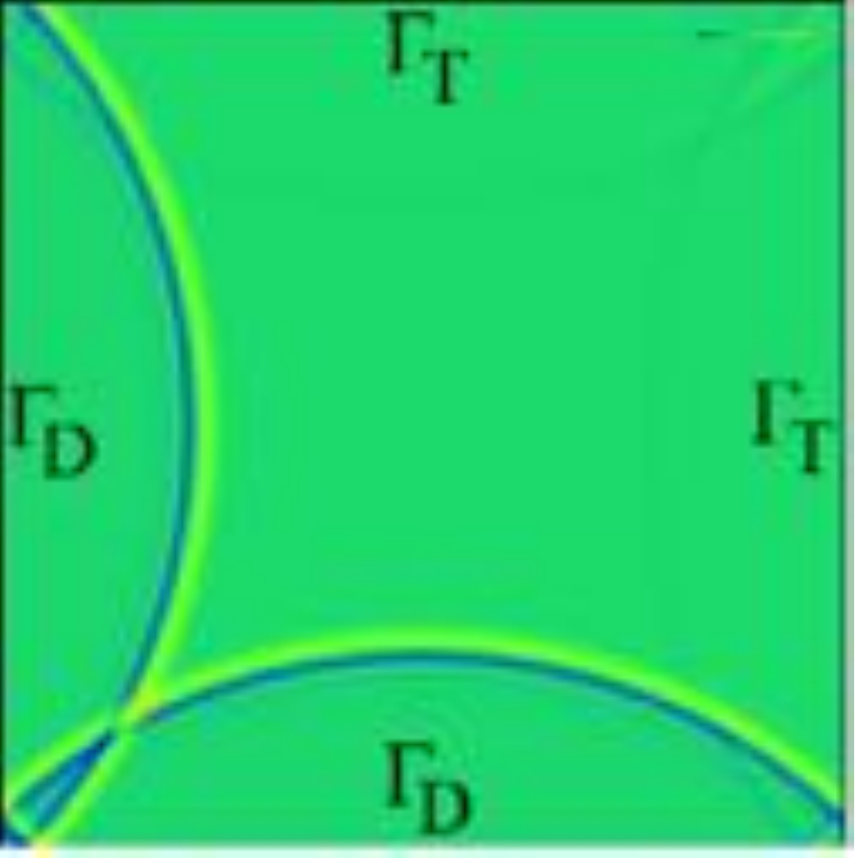}
{$t=75$}
\end{minipage}
\begin{minipage}[b]{0.155\linewidth}
\vspace{0.5cm}
\centering
\includegraphics[width=0.99\linewidth]{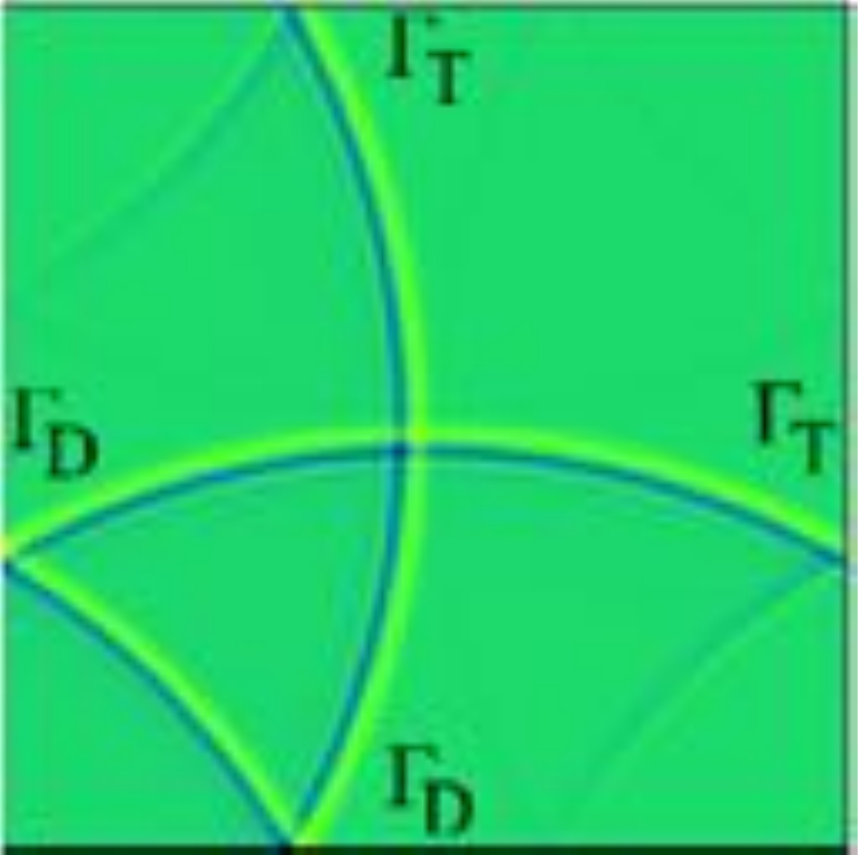}
{$t=100$}
\end{minipage}
\begin{minipage}[b]{0.062\linewidth}
\vspace{0.5cm}
\centering
\includegraphics[width=0.99\linewidth]{Colour_bar}
\vspace{0.00cm}
\end{minipage}

\begin{minipage}[b]{0.08\linewidth}
\centering
{$(iv)$}
\vspace{1cm}
\end{minipage}
\begin{minipage}[b]{0.155\linewidth}
\centering
\includegraphics[width=0.99\linewidth]{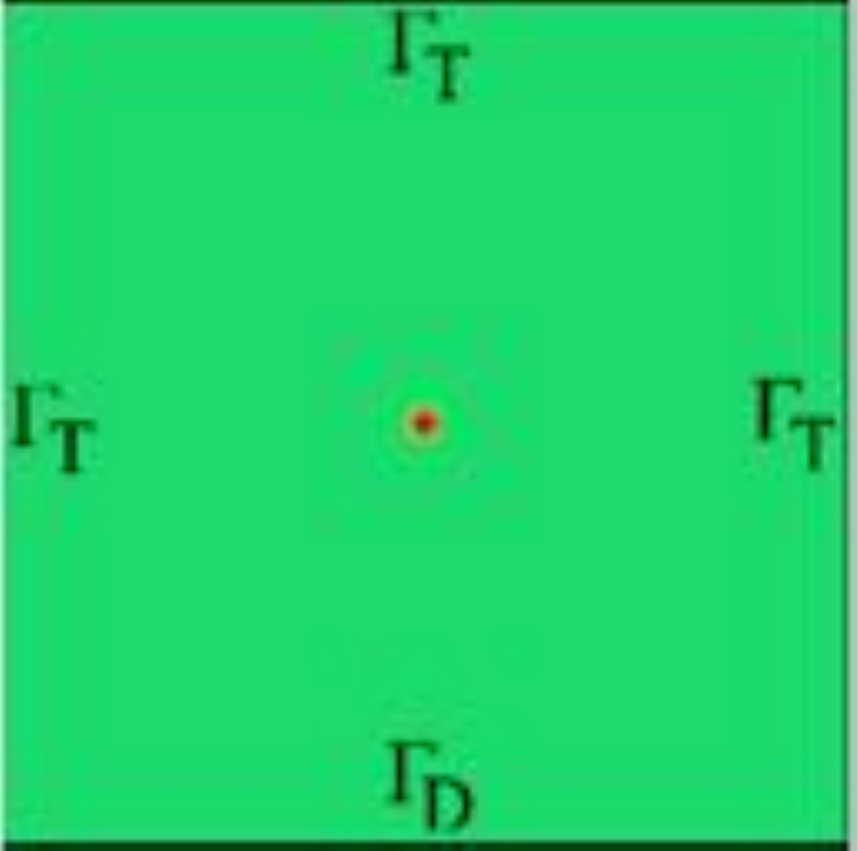}
{$t=0$}
\end{minipage}
\begin{minipage}[b]{0.155\linewidth}
\centering
\includegraphics[width=0.99\linewidth]{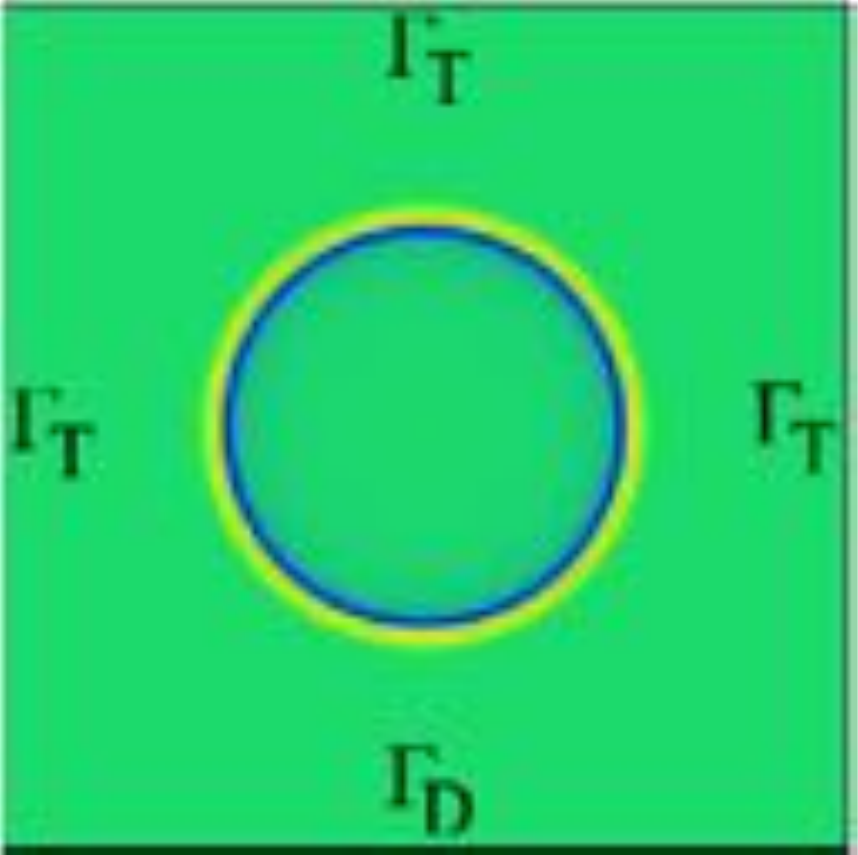}
{$t=25$}
\end{minipage}
\begin{minipage}[b]{0.155\linewidth}
\centering
\includegraphics[width=0.99\linewidth]{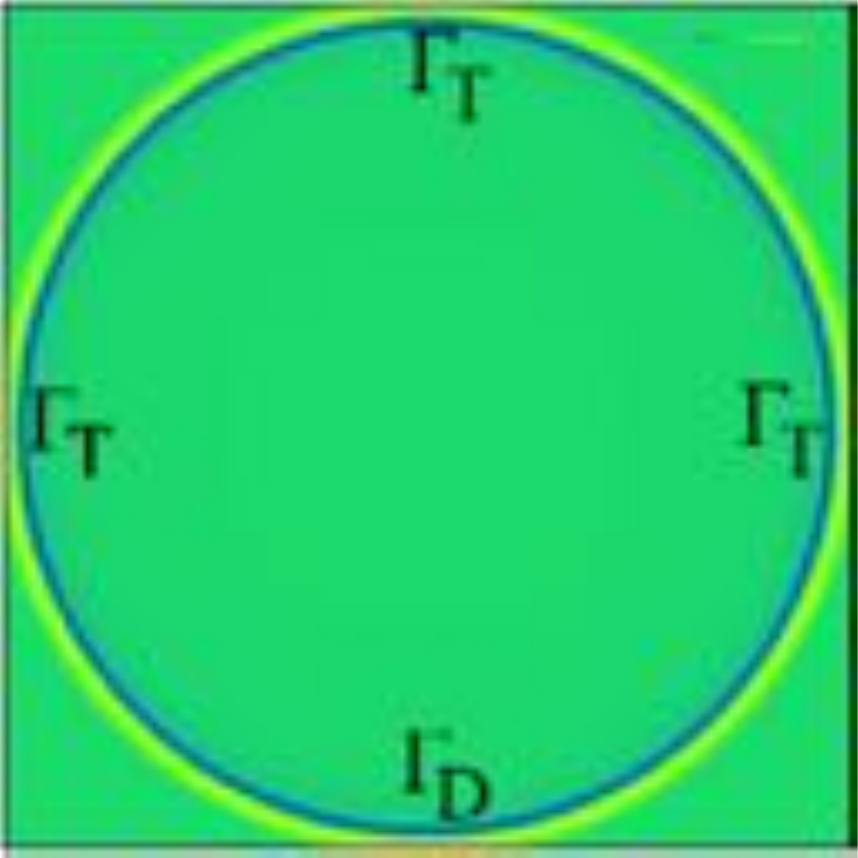}
{$t=50$}
\end{minipage}
\begin{minipage}[b]{0.155\linewidth}
\vspace{0.5cm}
\centering
\includegraphics[width=0.99\linewidth]{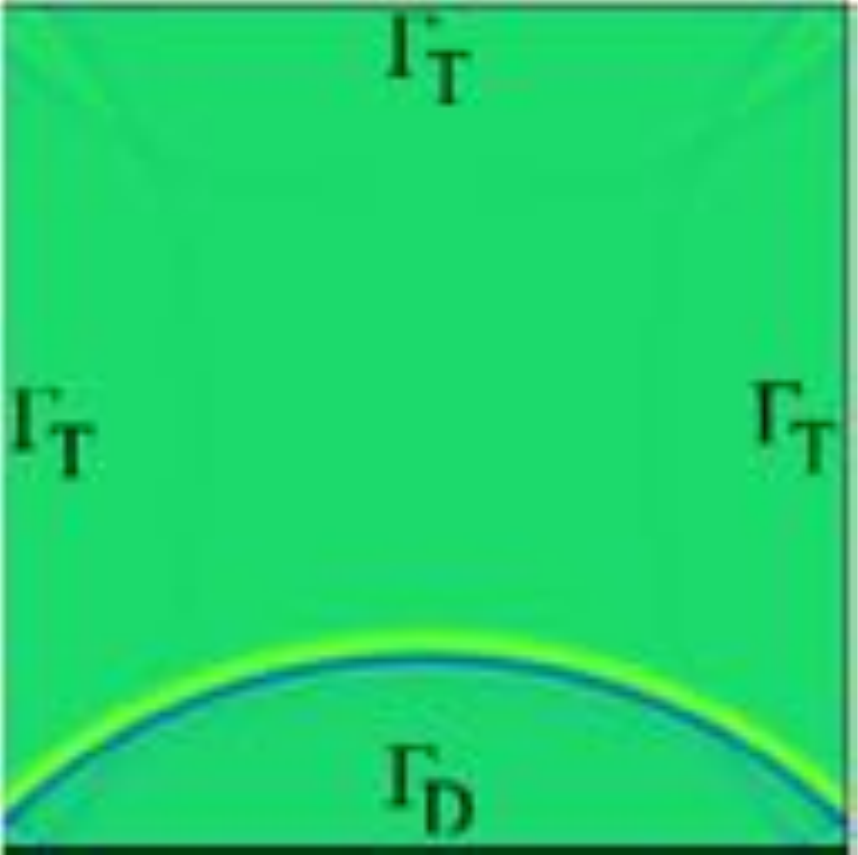}
{$t=75$}
\end{minipage}
\begin{minipage}[b]{0.155\linewidth}
\vspace{0.5cm}
\centering
\includegraphics[width=0.99\linewidth]{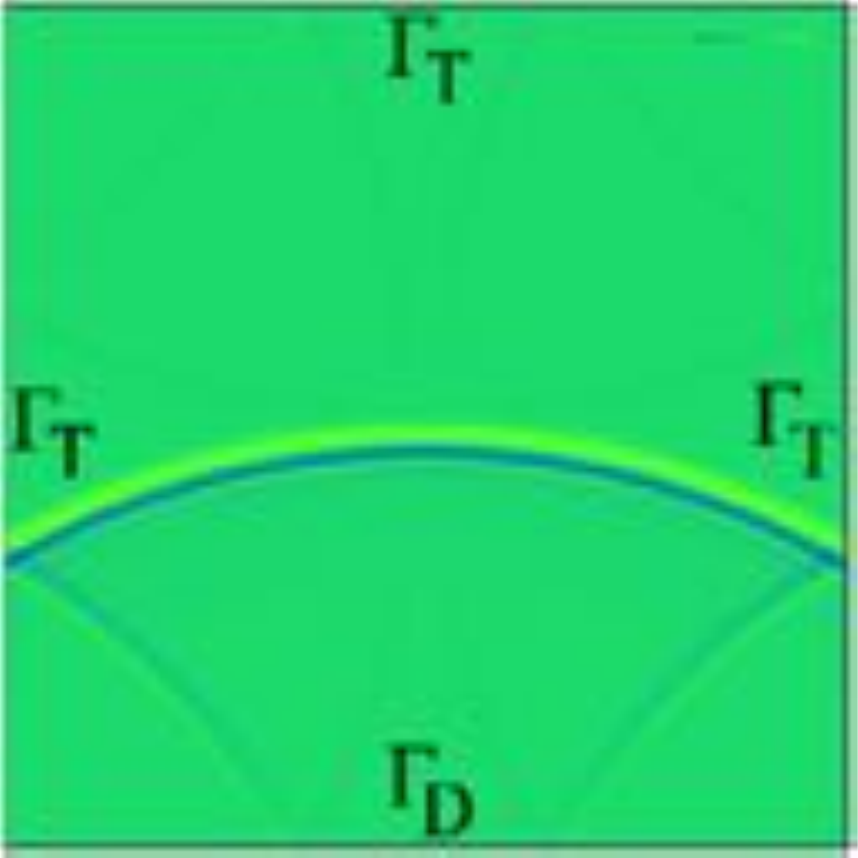}
{$t=100$}
\end{minipage}
\begin{minipage}[b]{0.062\linewidth}
\vspace{0.5cm}
\centering
\includegraphics[width=0.99\linewidth]{Colour_bar}
\vspace{0.00cm}
\end{minipage}

\begin{minipage}[b]{0.08\linewidth}
\centering
{$(v)$}
\vspace{1cm}
\end{minipage}
\begin{minipage}[b]{0.155\linewidth}
\centering
\includegraphics[width=0.99\linewidth]{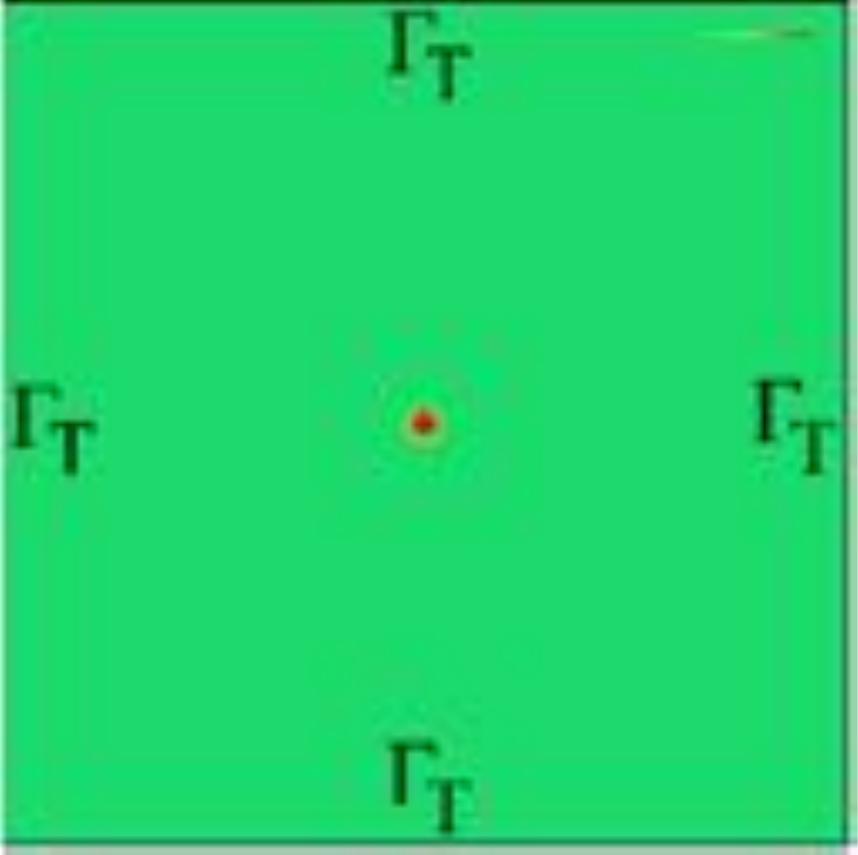}
{$t=0$}
\end{minipage}
\begin{minipage}[b]{0.155\linewidth}
\centering
\includegraphics[width=0.99\linewidth]{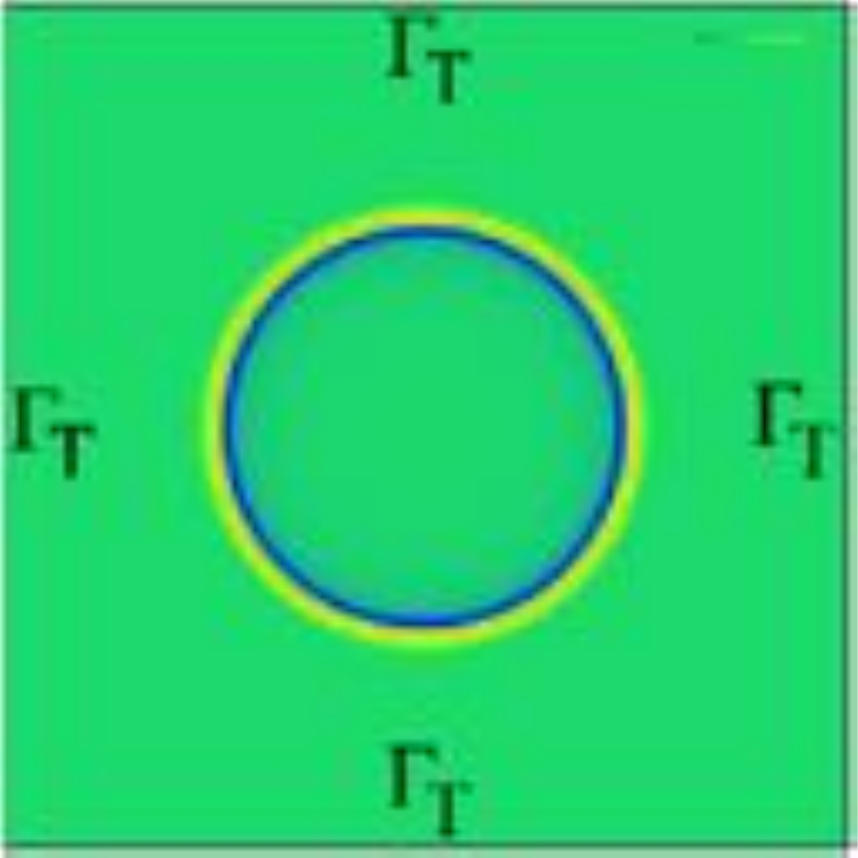}
{$t=25$}
\end{minipage}
\begin{minipage}[b]{0.155\linewidth}
\centering
\includegraphics[width=0.99\linewidth]{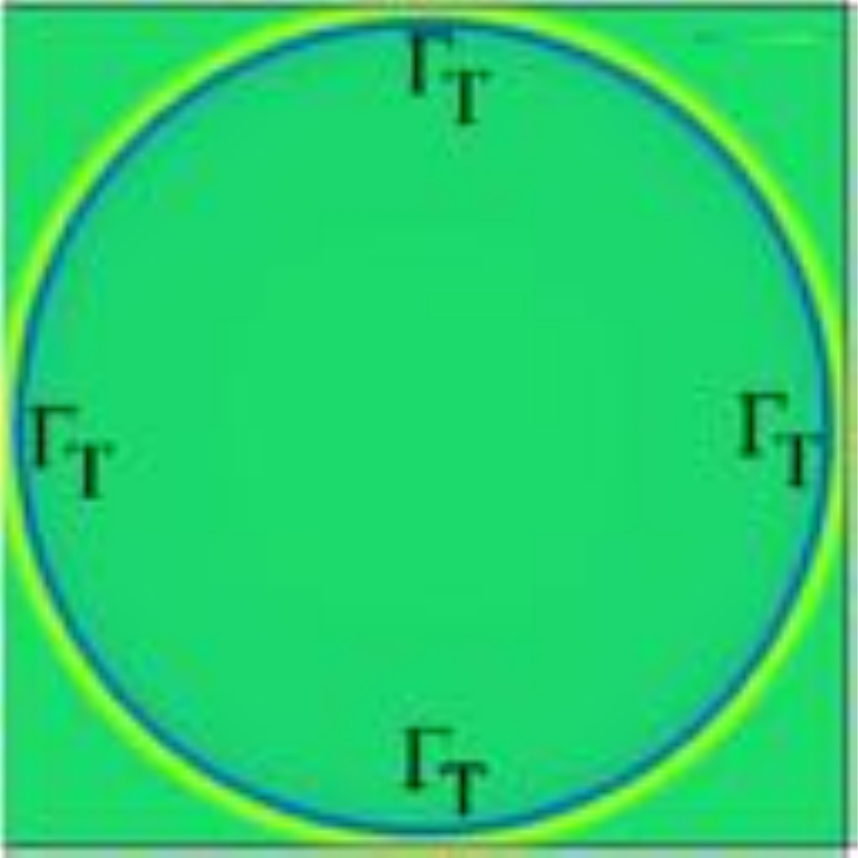}
{$t=50$}
\end{minipage}
\begin{minipage}[b]{0.155\linewidth}
\vspace{0.5cm}
\centering
\includegraphics[width=0.99\linewidth]{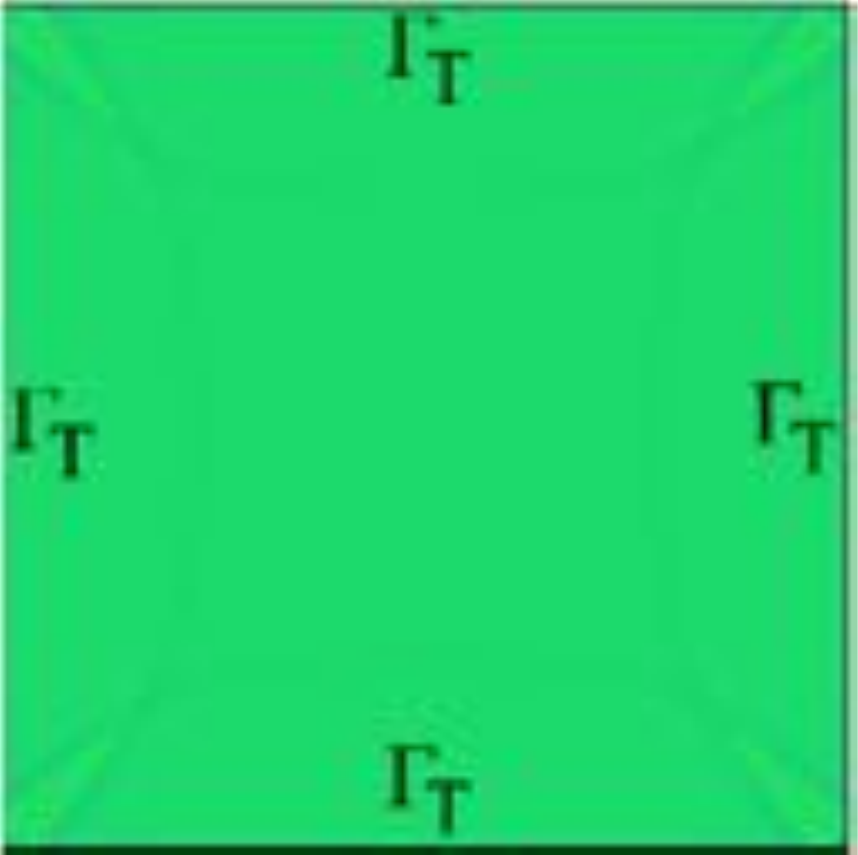}
{$t=75$}
\end{minipage}
\begin{minipage}[b]{0.155\linewidth}
\vspace{0.5cm}
\centering
\includegraphics[width=0.99\linewidth]{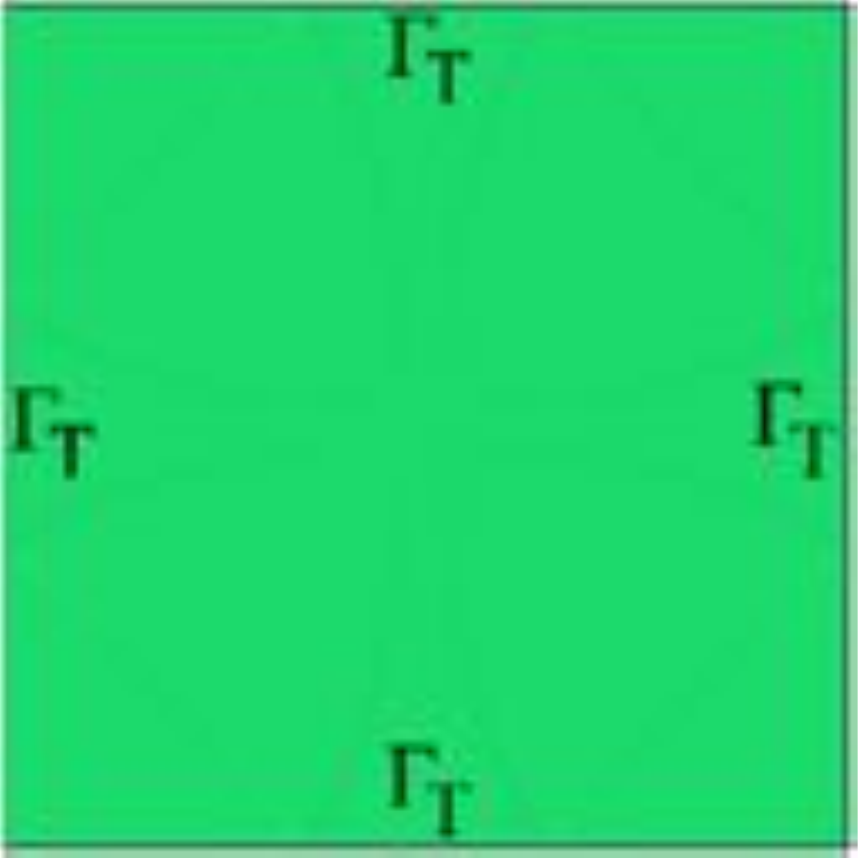}
{$t=100$}
\end{minipage}
\begin{minipage}[b]{0.062\linewidth}
\vspace{0.5cm}
\centering
\includegraphics[width=0.99\linewidth]{Colour_bar}
\vspace{0.00cm}
\end{minipage}
\caption{\cK Color contours of $\eta_h^k$ by finite difference scheme~\eqref{scheme} for the five cases~$(i)$-$(v)$ discussed in Subsection~\ref{subsec4.2}.}
\label{Simulatedresult}
\end{figure}
\cK
\subsection{\cK Numerical study of energy estimate}\label{subsec4.3}
\begin{figure}[!htbp]
\begin{minipage}{0.04\linewidth}
\centering
$(i)$
\vspace{5.6em}\\
$(ii)$
\vspace{5.6em}\\
$(iii)$
\vspace{5.6em}\\
$(iv)$
\vspace{5.6em}\\
$(v)$
\end{minipage}
\begin{minipage}{0.31\linewidth}
\centering
$E_h^k$ \\
\includegraphics[width=0.98\linewidth]{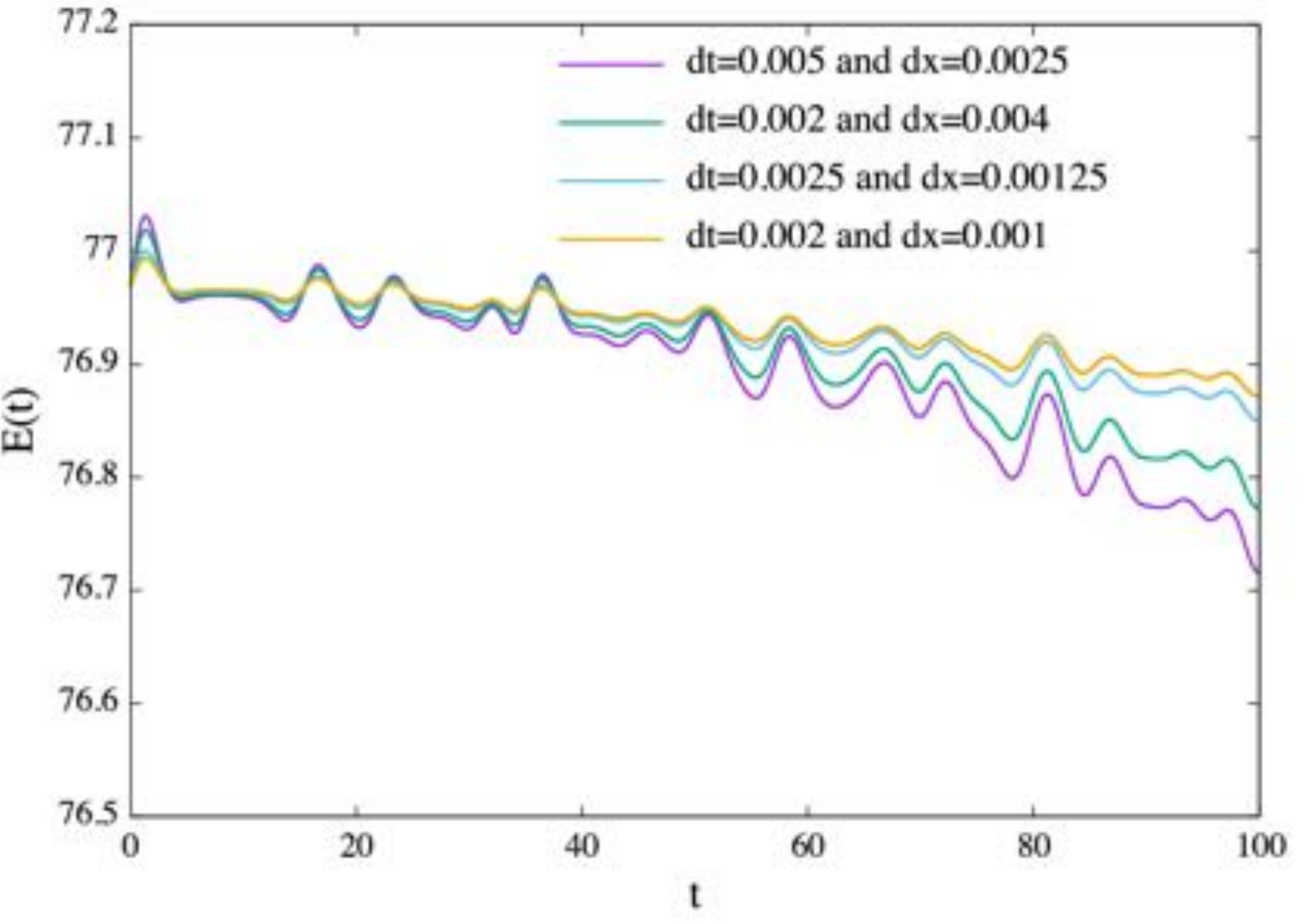} \\
\includegraphics[width=0.98\linewidth]{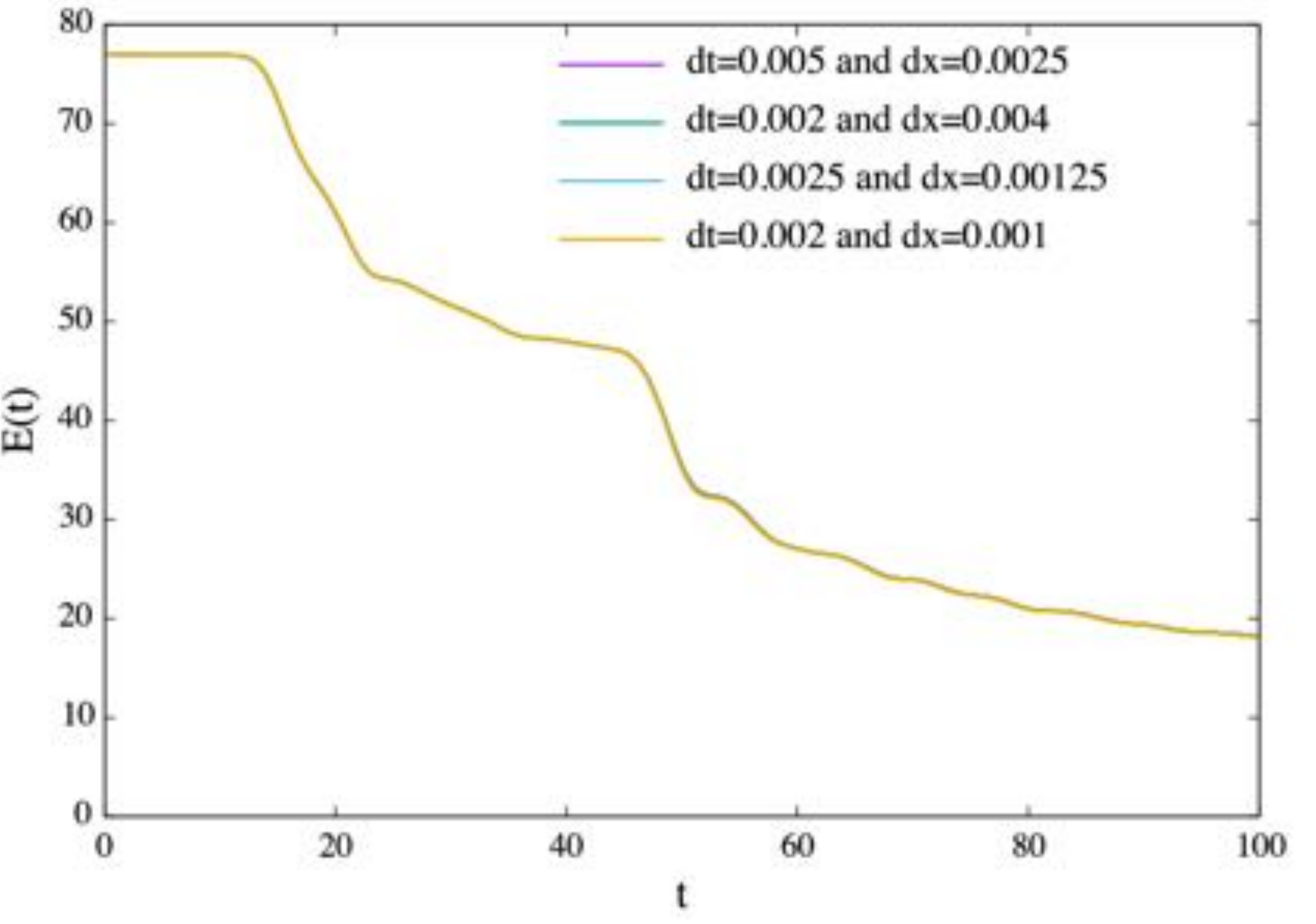} \\
\includegraphics[width=0.98\linewidth]{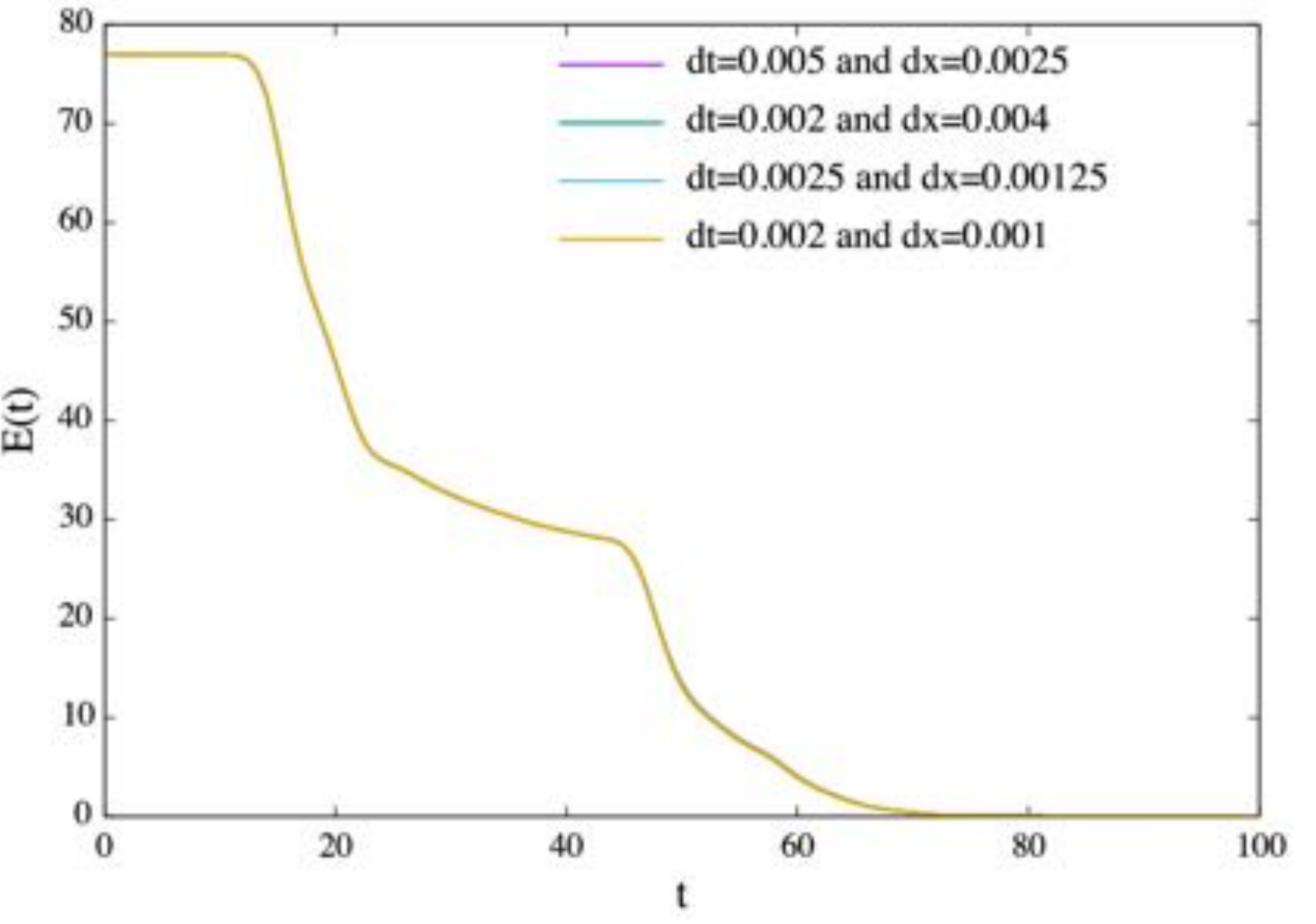} \\
\includegraphics[width=0.98\linewidth]{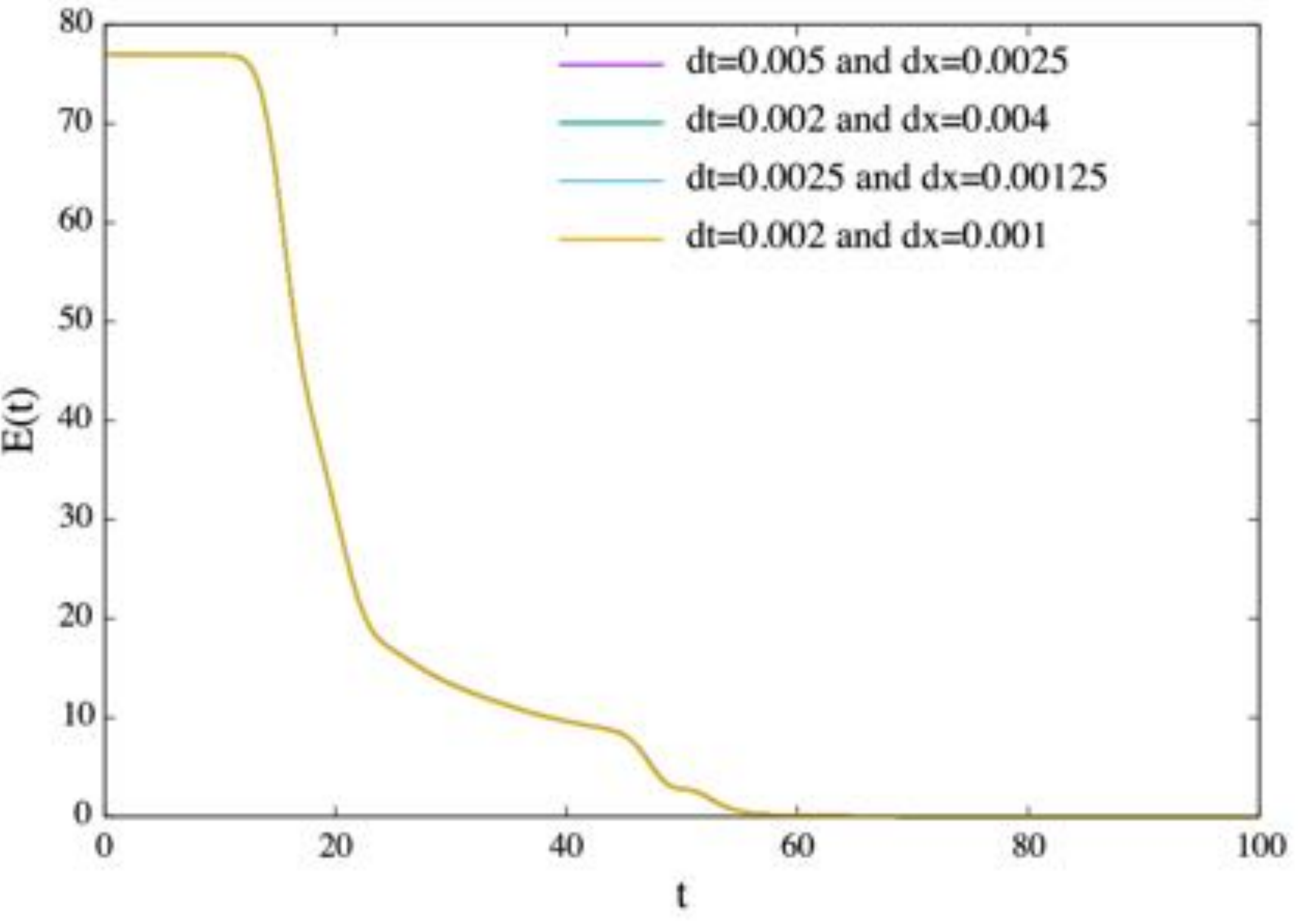} \\
\includegraphics[width=0.98\linewidth]{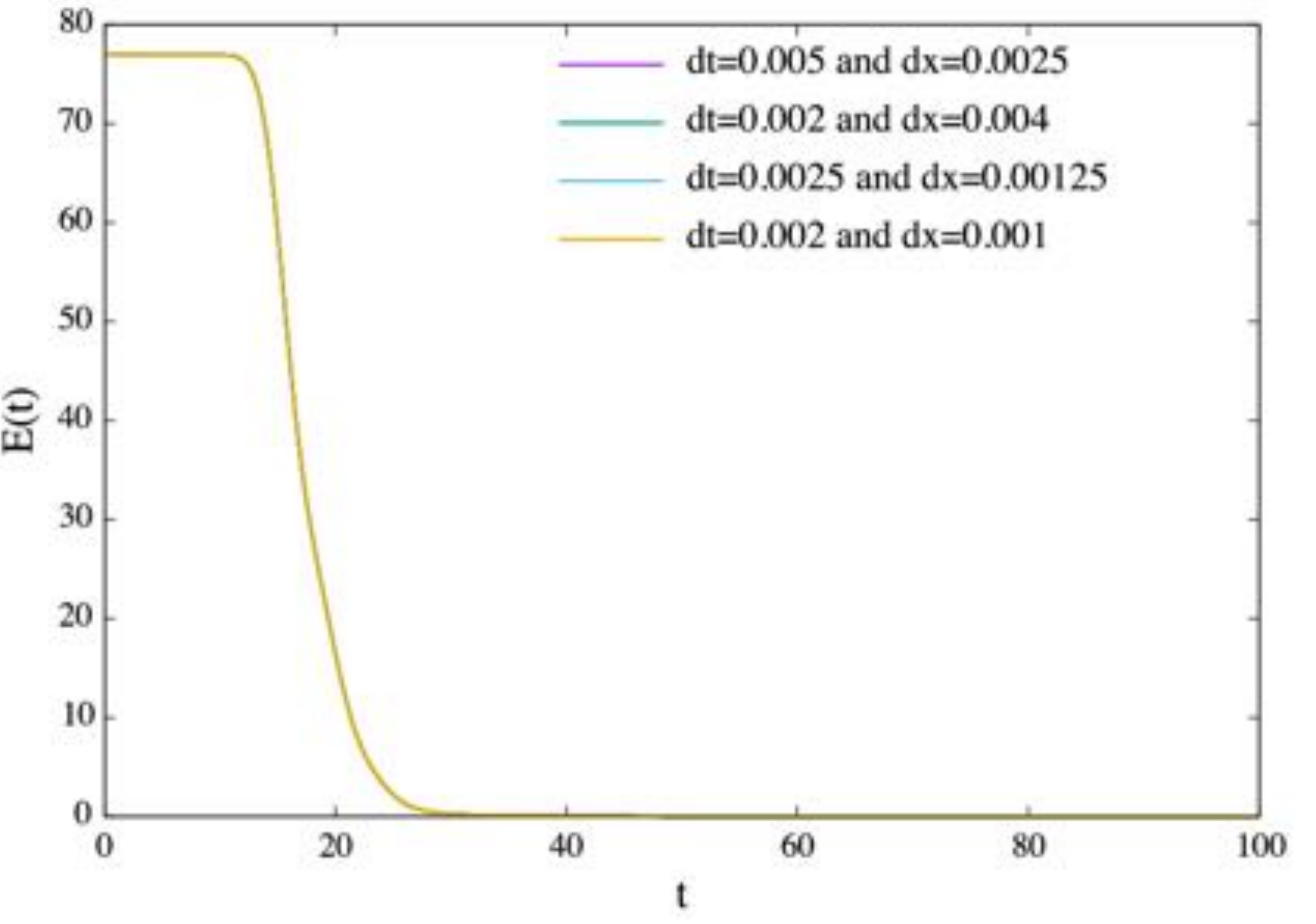}
\end{minipage}
\begin{minipage}{0.31\linewidth}
\centering
$\sum_{i=1}^4 I_{hi}^k \approx \tfrac{d}{dt} E(t^k)$ \\
\includegraphics[width=0.98\linewidth]{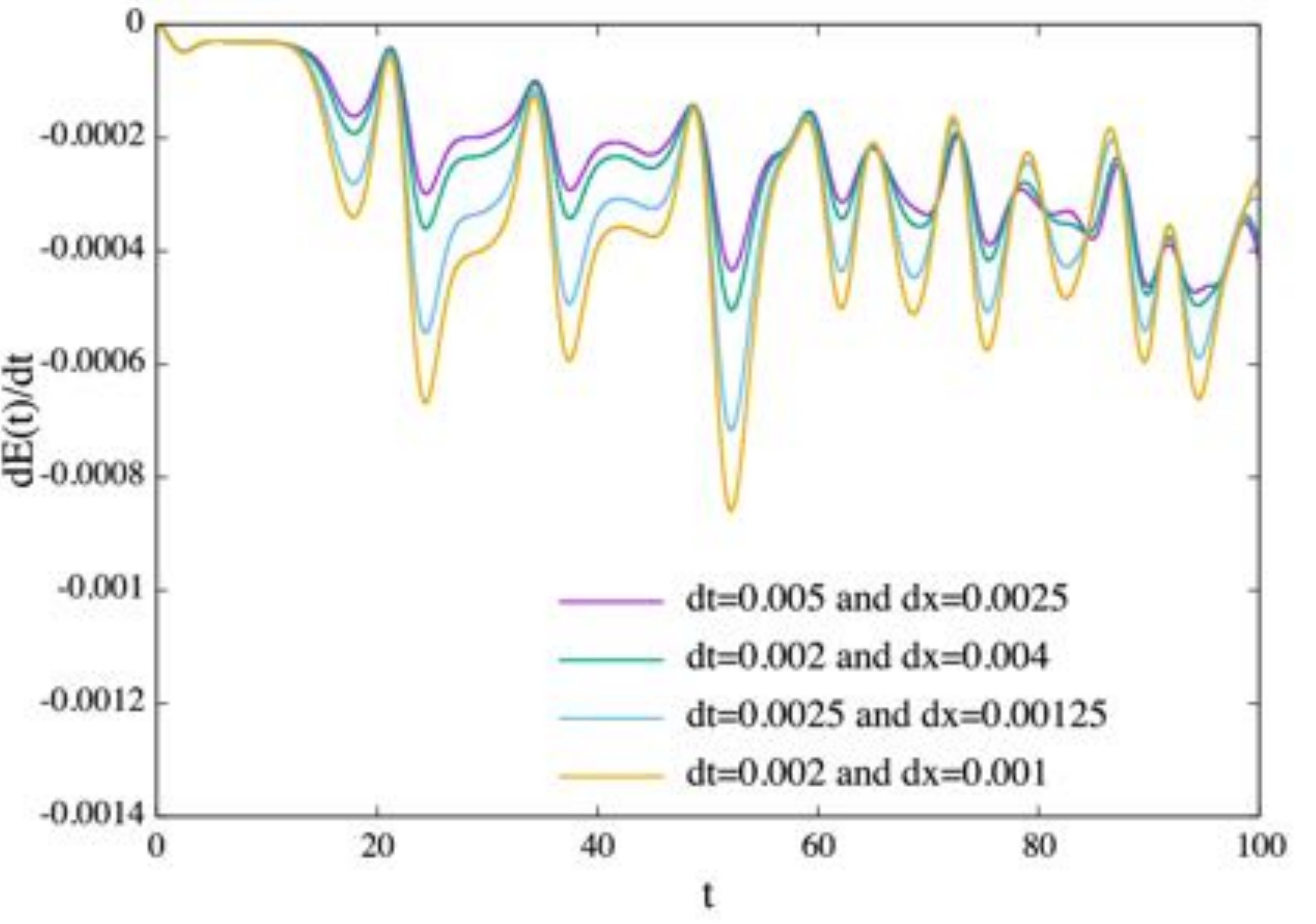} \\
\includegraphics[width=0.98\linewidth]{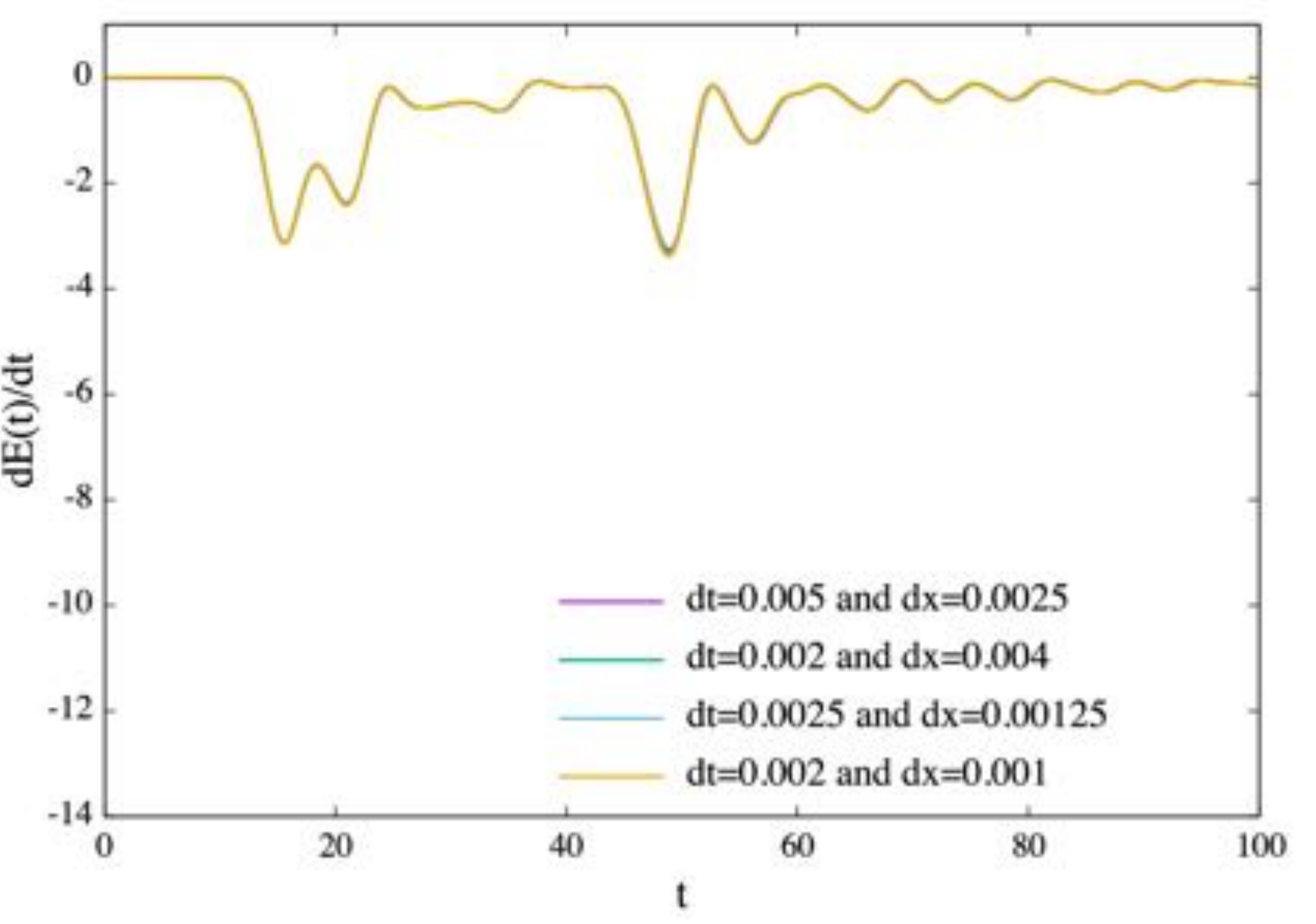} \\
\includegraphics[width=0.98\linewidth]{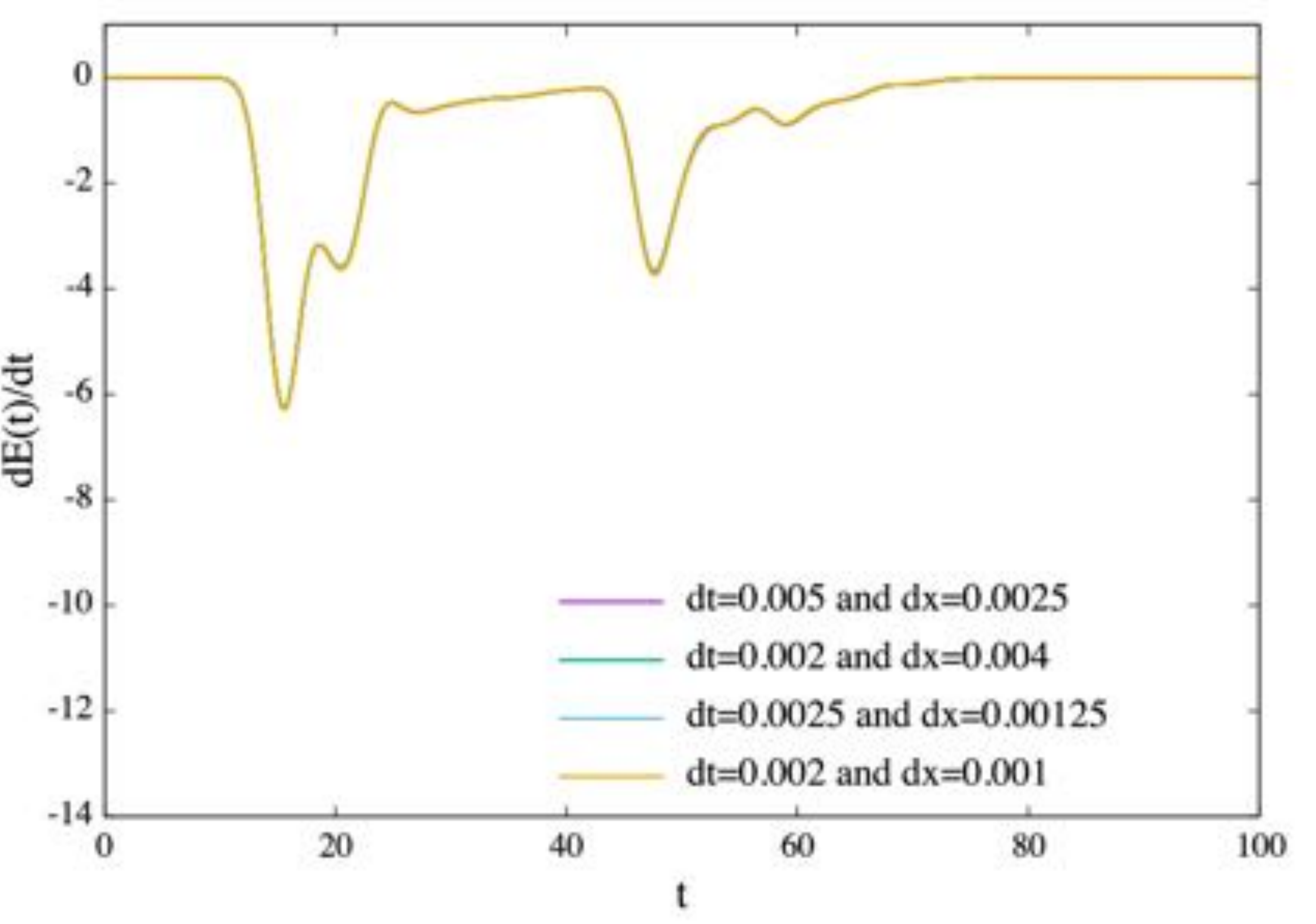} \\
\includegraphics[width=0.98\linewidth]{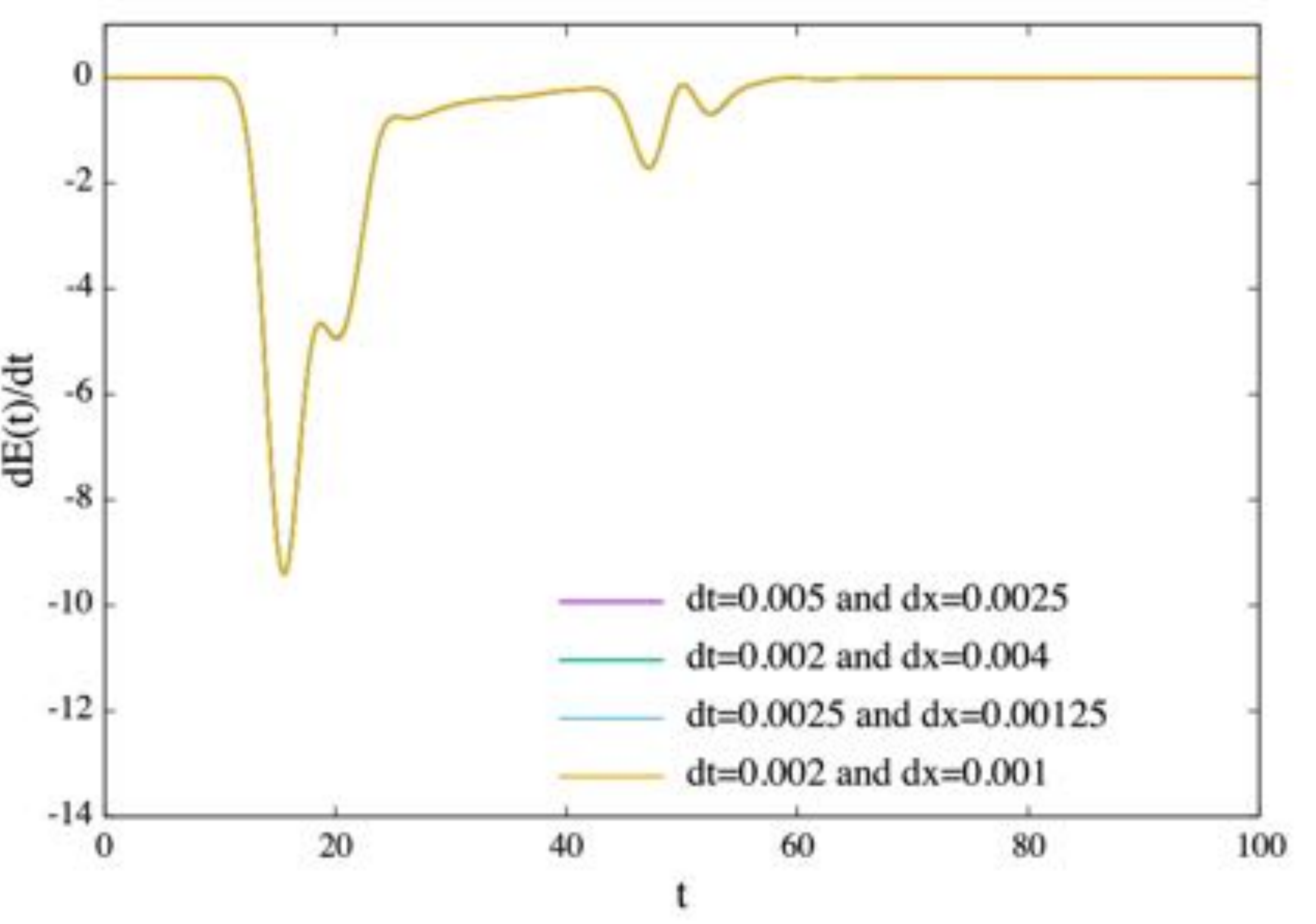} \\
\includegraphics[width=0.98\linewidth]{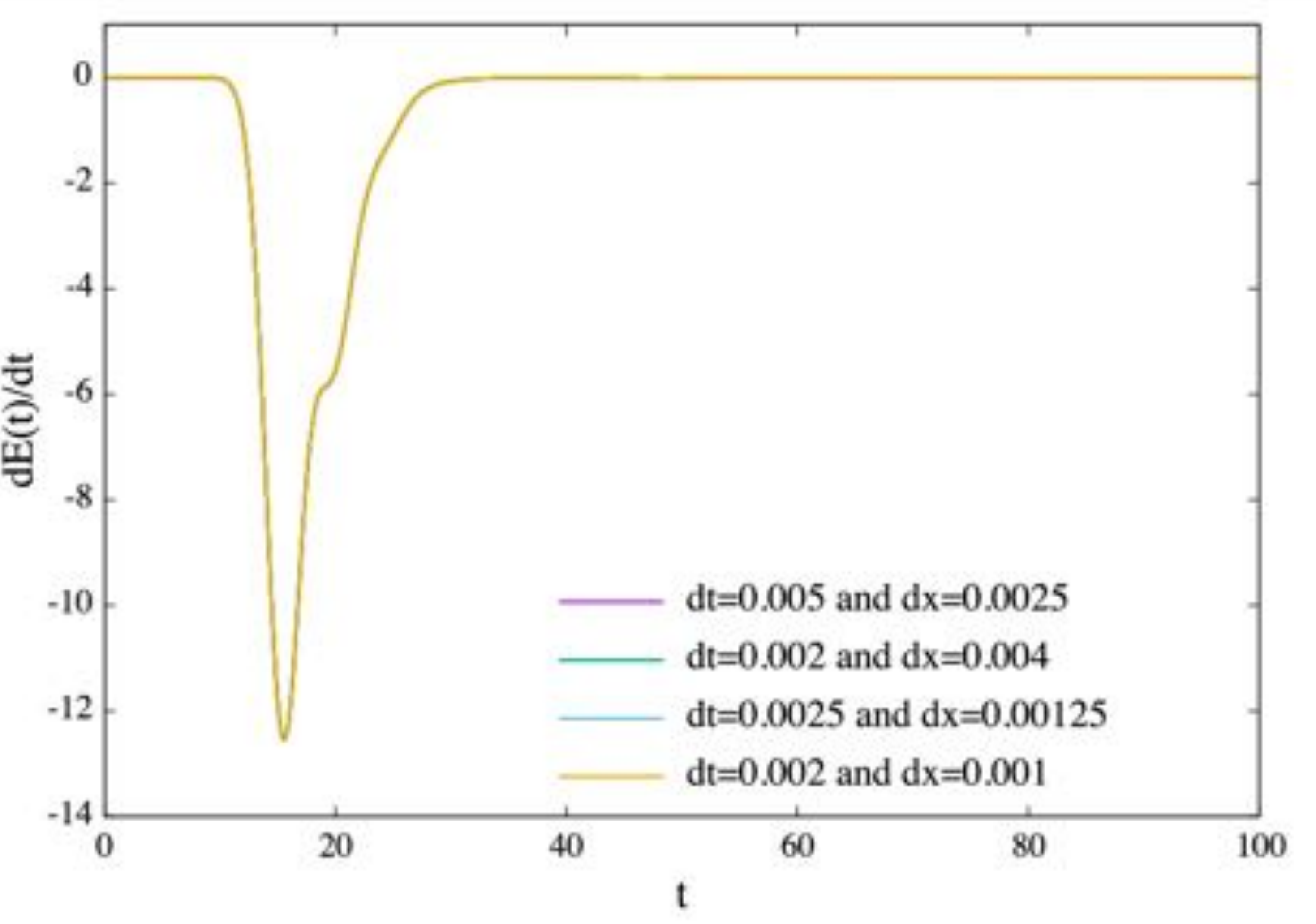}
\end{minipage}
\begin{minipage}{0.31\linewidth}
\centering
$I_{hi}^k$, $i=1, \ldots, 4$ \\
\includegraphics[width=0.98\linewidth]{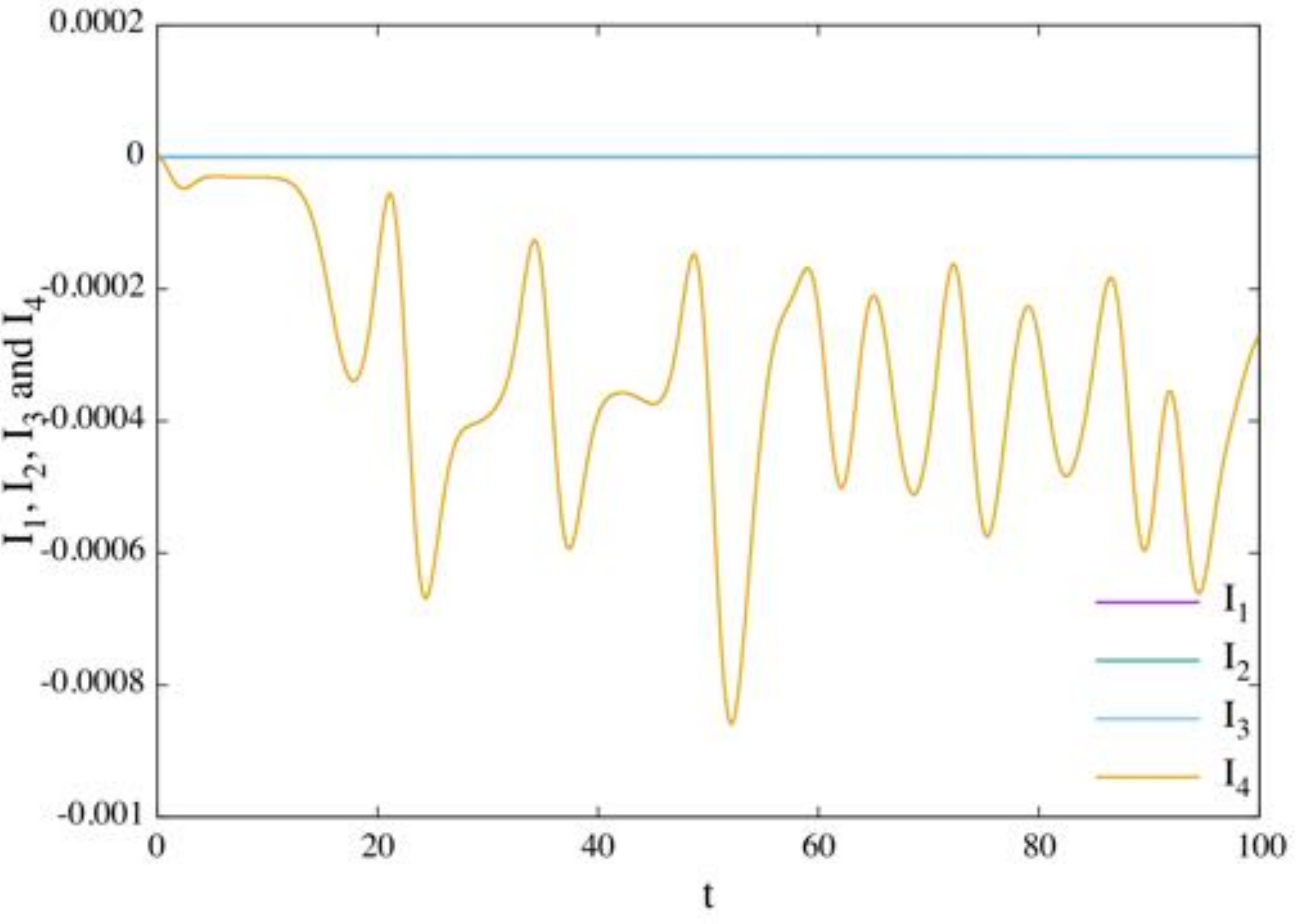} \\
\includegraphics[width=0.98\linewidth]{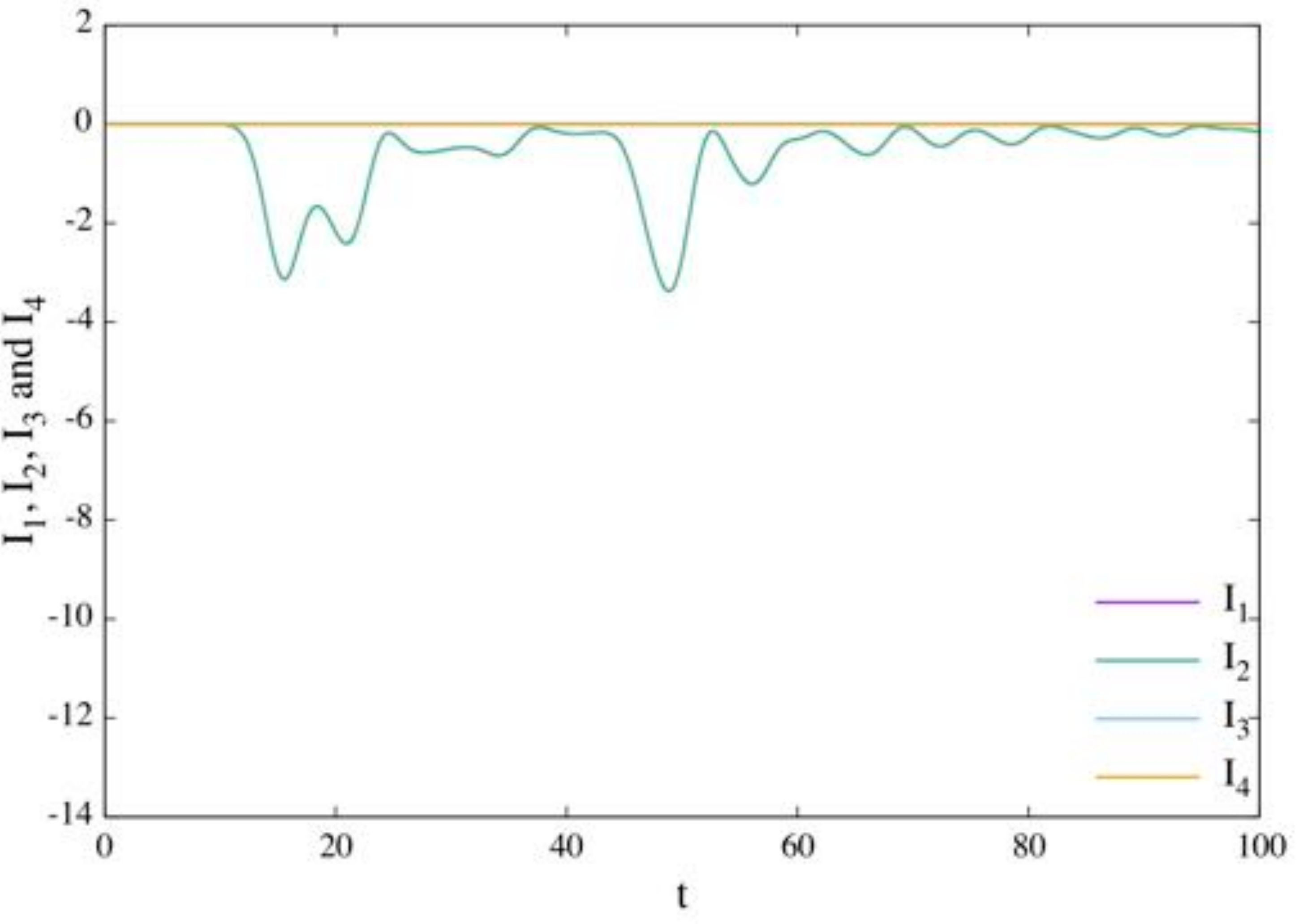} \\
\includegraphics[width=0.98\linewidth]{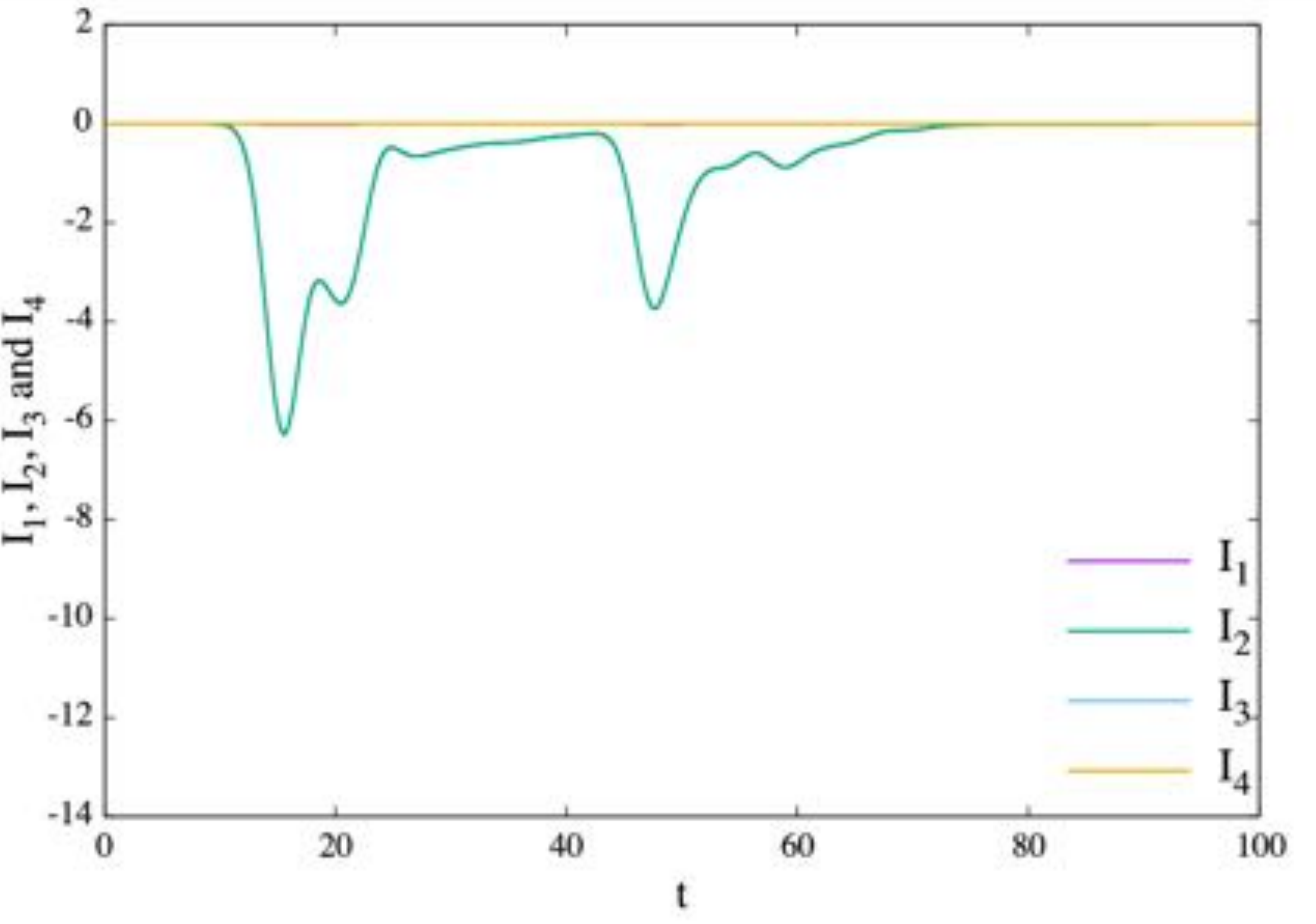} \\
\includegraphics[width=0.98\linewidth]{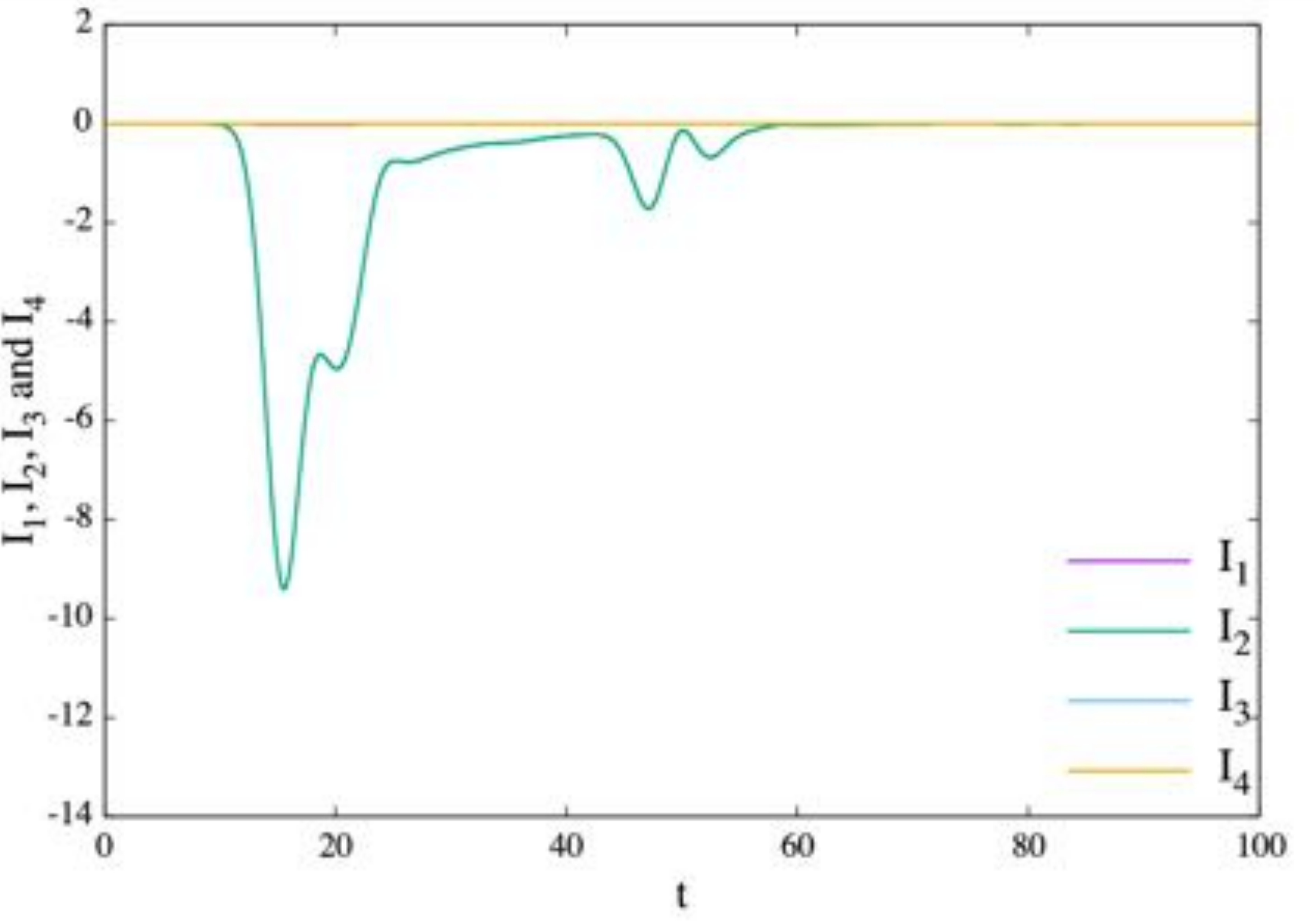} \\
\includegraphics[width=0.98\linewidth]{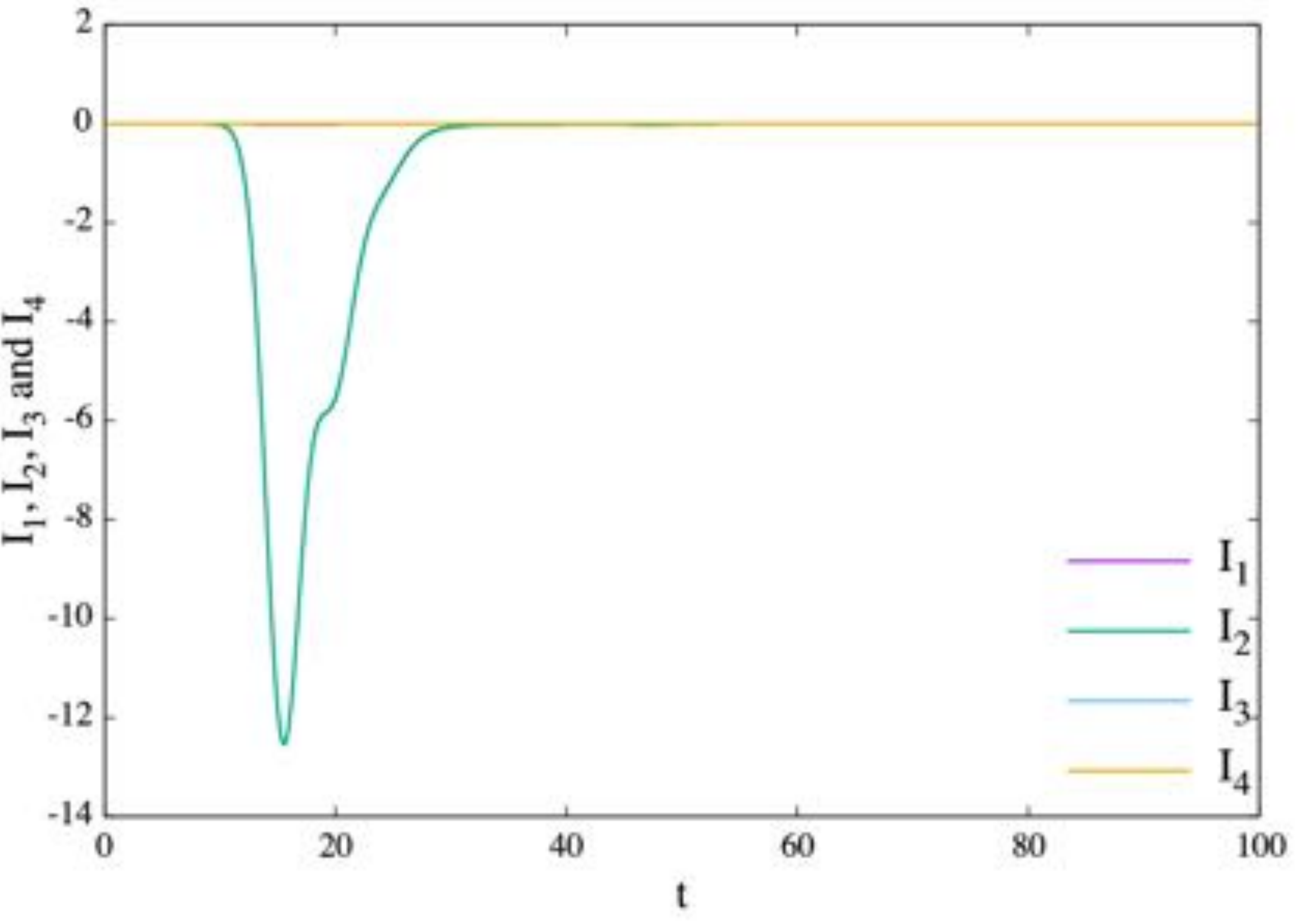}
\end{minipage}
\caption{Graphs of $E_h^k$~(left), 
$\sum_{i=1}^4 I_{hi}^k \approx \fz{d}{dt} E(t)$~(center)
and $I_{hi}^k$, $i=1, \ldots, 4$,~(right) 
versus $t=t^k~(\ge 0, k\in\Z)$ for the five cases~$(i)$-$(v)$.
}
\label{energy_etc}
\end{figure}
In this subsection, we study the stability of solutions to the problem~\eqref{prob}-\eqref{IC} numerically by scheme~\eqref{scheme} in terms of the energy~$E(t)$ defined in~\eqref{te}.
The values of~$E(t^k)$ and $I_i(t^k; \Gamma)$, $i=1, 2, 3$, $I_4(t^k; \Omega)$ are approximately computed by
using solution~$\{ ( u_h^k, \phi_h^k )\}_{k=1}^{N_T}$ with $\{\eta_h^k\}_{k=1}^{N_T}$ of scheme~\eqref{scheme} as
\begin{align*}
E(t^k) \approx E_h^k & \defeq \frac{\rho}{2} h^2 \sum_{x_{i,j}\in \Omega_h} \phi_h^k (x_{i,j}) \bigl|  u_h^k (x_{i,j}) \bigr|^2 + \frac{\rho g}{2} h^2 \sum_{x_{i,j}\in \Omega_h} \bigl| \eta_h^k (x_{i,j}) \bigr|^2, \\
\cK
I_1(t^k; \Gamma) \approx I_{h1}^k & \defeq \frac{\rho}{2} \int_{\Gamma} ( \Pi_h \phi_h^k ) \bigl| \Pi_h  u_h^k \bigr|^2 (\Pi_h  u_h^k) \cdot {  n} \, ds, \\
\cK
I_2(t^k; \Gamma) \approx I_{h2}^k & \defeq - \rho g \int_{\Gamma} ( \Pi_h \phi_h^k ) ( \Pi_h \eta_h^k ) (\Pi_h  u_h^k ) \cdot {  n} \, ds, \\
\cK
I_3(t^k; \Gamma) \approx I_{h3}^k & \defeq 2 \mu \sum_{m=1}^N \Bigl( \int_{\ell_m^{\rm T}} + \int_{\ell_m^{\rm B}} + \int_{\ell_m^{\rm L}} + \int_{\ell_m^{\rm R}} \Bigr) ( \Pi_h \phi_h^k ) (D (\Pi_h  u_h^k ) {  n} ) \cdot (\Pi_h  u_h^k ) \, ds, \\
\cK
I_4(t^k; \Omega) \approx I_{h4}^k & \defeq -2\mu \, h^2 \sum_{i,j=1}^N (\Pi_h \phi_h^k) (x_{i-1/2,j-1/2}) \bigl| D_h (u_h^k)(x_{i-1/2,j-1/2}) \bigr|^2,
\end{align*}
where $\Pi_h f_h \in C(\ol{\Omega}; \R)$ is the bilinear interpolation of $f_h: \ol{\Omega}_h \to \R$ for $f_h = \phi_h, \eta_h$,
$\Pi_h u_h = (\Pi_h u_{h1}, \Pi_h u_{h2})^T \in C(\ol{\Omega}; \R^2)$,
the boundary is represented as $\ol{\Gamma} = \bigcup_{m=1}^N (\ol{\ell}_m^{\rm T} \cup \ol{\ell}_m^{\rm B} \cup \ol{\ell}_m^{\rm L} \cup \ol{\ell}_m^{\rm R})$ for line segments $\ell_m^{\rm T}$, $\ell_m^{\rm B}$, $\ell_m^{\rm L}$, $\ell_m^{\rm R}$ defined by
$\ell_m^{\rm T} \defeq \ol{x_{m-1,N} x_{m,N}}$,
$\ell_m^{\rm B} \defeq \ol{x_{m-1,0} x_{m,0}}$,
$\ell_m^{\rm L} \defeq \ol{x_{0,m-1} x_{0,m}}$,
$\ell_m^{\rm R} \defeq \ol{x_{N,m-1} x_{N,m}}$,
and the domain is represented as $\ol{\Omega} = \bigcup_{i,j=1}^N \ol{\omega}_{i-1/2, j-1/2}$ for $\omega_{i-1/2, j-1/2} \defeq ((i-1)h, ih) \times ((j-1)h, jh) \subset \Omega$ with the area~$h^2$.
\par
\cK
Numerical simulations for the problem~\eqref{prob}-\eqref{IC} with $L=1$, $a=0.1$, $u^0=0$, $c_1=0.001$, $p=(0.5,0.5)^T$ are carried out by scheme~\eqref{scheme} with $\Delta t = 2h$ for $h=L/N=1/N$, $N=400$, $500$, $800$ and $1,000$.
\cK
The results are presented in Figure~\ref{energy_etc}, where $(i)$-$(v)$ in the figure represent the cases $(i)$-$(v)$ at the beginning of this section.
\cK
The graphs of $E_h^k$ and $\sum_{i=1}^4 I_{hi}^k$ versus $t=t^k~(k\in\N)$ are presented in the left and center figures, respectively.
\cK
There are four lines in each figure, but the lines are almost overlapped in the cases of $(ii)$-$(v)$.
In the case of $(i)$ the graphs are qualitatively similar.
\cK
The right figures show the graphs of $I_{hi}^k$, $i=1, \ldots, 4$ versus $t=t^k~(k\in\N)$.
The maximum and minimum values of $I_{hi}^k$, $i=1, \ldots, 4$, on the time interval $(0, T)$ are presented in Table~\ref{table:1}.
\par
From the numerical results presented in Figure~\ref{energy_etc}, it can be found that the total energy is {\cK mainly} decreasing with respect to time.
In the case of~$(i)$, i.e., $\Gamma = \Gamma_D$, we can see that at the early period the graphs are increasing in the left figure, while the values are small.
From the center figures it can be clearly seen that the sum~$\sum_{i=1}^4 I_{hi}^k$ corresponding to the derivative of the total energy is always non-positive, which confirms the stability of solutions to the model numerically.
From right figures and Table~\ref{table:1}, it can be observed that the value of $I_{h2}$ is dominating negatively over $I_{h1}$ and $I_{h3}$ so that the sum $\sum_{i=1}^4 I_{hi}$ becomes non-positive always.
\begin{table}[!htbp]
	\caption{Maximum and minimum values of~$I_{hi}^k$, $i=1,\ldots, 4$, with respect to the number of transmission boundaries}\label{table:1}
\cK
\begin{tabular}{cccrrrr}
	\toprule
	\multirow{1}{*}{$\Gamma_T$} & \multirow{1}{*}{$\Gamma_D$} & & \multicolumn{1}{c}{$I_{h1}$} & \multicolumn{1}{c}{$I_{h2}$} & \multicolumn{1}{c}{$I_{h3}$} & \multicolumn{1}{c}{$I_{h4}$} \\
	\midrule
	\multirow{2}{*}{0}         &  \multirow{2}{*}{4} & Max &  0.00 & 0.00 & 0.00 & 0.00 \\
           &             & Min & 0.00 & 0.00 &  0.00 & $-8.59 \times 10^{-4}$ \\
	\midrule
\multirow{2}{*}{1}         &  \multirow{2}{*}{3}   & Max & $1.10 \times 10^{-4}$ & 0.00 & $1.44 \times 10^{-6}$ & 0.00 \\
           &             & Min & $-2.59 \times 10^{-3}$ & $-3.37$ & $-1.25\times 10^{-6}$ & $-3.76\times 10^{-4}$ \\
 \midrule
\multirow{2}{*}{2}         &  \multirow{2}{*}{2}   & Max       & $1.86 \times 10^{-4}$ & 0.00 & $1.72 \times 10^{-6}$ & 0.00 \\
           &          & Min    & $-3.38 \times 10^{-3}$ & -6.27 & $-2.50 \times 10^{-6}$ & $-2.29 \times 10^{-4}$ \\
 \midrule
\multirow{2}{*}{3}         &  \multirow{2}{*}{1}    & Max       & $1.43 \times 10^{-4}$ & 0.00 & $2.58 \times 10^{-6}$ & 0.00 \\
           &        & Min      & $-5.06 \times 10^{-3}$ & $-9.40$ & $-3.75 \times 10^{-6}$ & $-1.73 \times 10^{-4}$ \\
 \midrule
 \multirow{2}{*}{4}         &  \multirow{2}{*}{0}      & Max     & $2.87 \times 10^{-4}$ & 0.00 & $3.47 \times 10^{-6}$ & 0.00 \\
           &          & Min    & $-6.75 \times 10^{-3}$ & $-12.54$ & $-5.01 \times 10^{-6}$ & $-1.13 \times 10^{-4}$ \\
    \bottomrule
  \end{tabular}
\end{table}
\cK
\subsection{Choice of $c_0$}\label{subsec4.4}
In this subsection, 
\cK
we study the value of~$c_0$ by solving the problem with $L=1$, $a=0.1$, ${  p}=(0.5,0.5)^T$ and $c_0=0.1, 0.2, \ldots, 1.2$ and $1.5$ by scheme~\eqref{scheme} with  $N=400$, $\Delta t=0.005~(N_T=20,000)$, and by computing
\cK
\begin{align*}
S_h^k(c_0) & \defeq \biggl\{ h^2\sum_{x_{i,j}\in\ol{\Omega}_h} \eta_h^k (x_{i,j})^2 \biggr\}^{\fz{1}{2}}
\approx S (t^k;c_0) \defeq \biggl\{ \int_\Omega|\eta^k(x)|^2 dx \biggr\}^{\fz{1}{2}} = \|\eta^k\|_{L^2(\Omega)}, \\
\mathcal{S}_h (c_0) & \defeq \biggl\{ \Delta t\sum_{k=0}^{N_T} S_h^k(c_0)^2 \biggr\}^{\fz{1}{2}}
\approx \mathcal{S} (c_0) \defeq \biggl\{ \int_0^T |S(t^k;c_0)|^2 dt \biggr\}^{\fz{1}{2}}
= \|\eta\|_{L^2(0,T; L^2(\Omega))}.
\end{align*}
\par
We set six different cases for the initial values of $\eta^0$ and $ u^0$ as follows.
\begin{align*}
& \text{ Case I:} && \eta^0=c_1 \exp \bigl(-100 \, \bigl|x-(0.5,0.5)\bigr|^2 \bigr), && u^0=0, \\
& \text{ Case II:} && \eta^0=c_1 \exp \bigl(-200 \, \bigl|x-(0.5,0.5)\bigr|^2\bigr), && u^0=0, \\
& \text{ Case III:} && \eta^0=c_1 \exp \bigl(-100 \, \bigl|x-(0,0.5)\bigr|^2\bigr), && u^0=0, \\
& \text{ Case IV:} && \eta^0=c_1 \exp \bigl(-100 \, \bigl|x\bigr|^2\bigr),          && u^0=0, \\
& \text{ Case V:} && \eta^0=c_1 \exp \bigl(-100 \, \bigl|x-(0.5,0.5)\bigr|^2 \bigr), && u^0=10^{-4} \, (1, 1)^T, \\
& \text{ Case VI:} && \eta^0=c_1 \exp \bigl(-100 \, \bigl|x-(0.5,0.5)\bigr|^2 \bigr), && u^0=10^{-4} \, (1,-1)^T,
\end{align*}
where $c_1=10^{-3}$.
\par
Since the artificial reflection should be removed after the time the wave touches the transmission boundary, we find a value of~$c_0$ which provides the minimum of~$\mathcal{S}_h(c_0)$.
\cK
The results are presented in Table~\ref{table:2}, from where it can be concluded that for the case of zero initial velocity the suitable value of~$c_0$ lies in~$[0.7, 1.0]$ and for the case of nonzero initial velocity we cannot say anything yet.
\begin{table}[h!]
  \caption{$c_0$ and  $\mathcal{S}_h(c_0)$}\label{table:2}
  \begin{tabular}{lrrrrrr}
    \toprule
    \multicolumn{1}{c}{$c_0$}  &  \multicolumn{6}{c}{$\mathcal{S}_h(c_0)$}  \\
	\cline{2-7}
          &\multicolumn{1}{c}{ Case I} & \multicolumn{1}{c}{Case II} & \multicolumn{1}{c}{Case III} &\multicolumn{1}{c}{ Case IV} & \multicolumn{1}{c}{Case V} & \multicolumn{1}{c}{Case VI} \\
  \midrule 
0.1 & 12.17\phantom{00} & 8.52\phantom{00} & 8.14 & 5.47 &  44.48  &  44.48 \\
0.2 & 9.88\phantom{00}  & 6.96\phantom{00} & 6.35 & 4.04 &  34.36  &  34.37  \\
0.3 & 8.84\phantom{00}  & 6.24\phantom{00} & 5.52 & 3.35 &  28.74  &  28.75 \\
0.4 & 8.27\phantom{00}  & 5.84\phantom{00} & 5.08 & 2.98 &  25.23  &  25.24\\
0.5 & 7.93\phantom{00}  & 5.61\phantom{00} & 4.84 & 2.79 &  22.82  &  22.83 \\
0.6 & 7.71\phantom{00}  & 5.46\phantom{00} & 4.71 & 2.69 &  21.05  &  21.06 \\
0.7 & 7.58\phantom{00}  & 5.37\phantom{00} & 4.65 & \underline{2.66} &  19.69  &  19.69 \\
0.8 & 7.51\phantom{00}  & 5.32\phantom{00} & \underline{4.63} & 2.67 &  18.60  &  18.61 \\
0.9 & \underline{7.4792}  & \underline{5.2951} & 4.64 & 2.70 &  17.71  &  17.72 \\
1.0 & 7.4795  & 5.2959 & 4.68 & 2.75 &  16.98 & 16.98 \\
1.1 & 7.50\phantom{00}  & 5.31\phantom{00} & 4.73 & 2.82 &  16.36  &  16.36 \\
1.2 & 7.55\phantom{00}  & 5.34\phantom{00} & 4.79 & 2.89 &  15.83  &  15.84 \\
1.5 & 7.75\phantom{00}  & 5.49\phantom{00} & 5.02 & 3.12 &  14.66  &   14.66 \\
 \bottomrule
  \end{tabular}
\end{table}
\section{Numerical results by an LG scheme}\label{sec5}
In this section, we present an LG scheme for the same problem described in Subsection~\ref{subsec4.2}.
\par
Let $\mathcal{T}_h= \{K\}$ be a triangulation of $\Omega$, 
{\cK and $M_h$ the so-called P1~(piecewise linear) finite element space.}
We set $\Psi_h \defeq M_h$ for the water level~$\eta$, and
\begin{align*}
V_h (\psi_h) & \defeq 
\left\{
v_h \in M_h^2;\ 
\begin{aligned}
v_h(P) &= c(P)\frac{\psi_h(P)-\zeta(P)}{\psi_h(P)} n(P), &&\forall P: \mbox{node on $\Gamma_T$}, \\
v_h(Q) &= 0, &&\forall Q: \text{node on} \ \Gamma_D
\end{aligned}
\right\}
\end{align*}
for the velocity~${ u}$.
The LG scheme is to find $\{(\phi^k_h, u^k_h)\}_{k=1}^{N_T}\subset \Psi_h \times V_h$ such that, for $k=1, \ldots, N_T$,
\begin{align}
\left\{
\begin{aligned}
\int_\Omega\frac{\phi_h^k-\widetilde{\phi}^{k-1}_h\circ X^{k-1}_{1h}\gamma^{k-1}_h}{\Delta t} \psi_h \, dx & = 0, \qquad\qquad\qquad \forall \psi_h \in \Psi_h, \\
\rho \int_\Omega \phi^k_h \frac{u^k_h-\widetilde{u}^{k-1}_h\circ X^{k-1}_{1h}}{\Delta t}\cdot v_h \, dx & + 2 \mu \int_\Omega \phi_h^k D( u_h^k) : D(v_h) \, dx \\
+ \rho g \int_\Omega\phi_h^k\nabla \eta_h^k \cdot v_h \, dx & = 0, 
\qquad\qquad\qquad \forall v_h \in V_h, \\
\phi^k_h & = \eta^k_h+\Pi_h^{\rm FEM} \zeta, 
\end{aligned}
\right.
\label{scheme:LG}
\end{align}
where
$X_{1h}^k(x) \defeq x- u^k_h(x)\Delta t$,
$\gamma^k_h: \Omega\to \mathbb{R}$ is defined by
\[
\gamma^k_h(x) \defeq {\rm det} \Bigl( \frac{\partial X^k_{1h}(x)}{\partial x } \Bigr),
\]
\cK
the symbol ``\,$\circ$\,'' represents the composition of functions, i.e., 
\cK
$[ v_h \circ X^k_{1h} ] (x) \defeq v_h(X^k_{1h}(x))$,
$\Pi_h^{\rm FEM}: C(\ol{\Omega}) \to M_h$ is the Lagrange interpolation operator,
and
\cK
\[
\widetilde\psi_h(x)=
	\left\{
	\begin{aligned}
	& \psi_h(x), & x &\in \overline{\Omega}, \\
	& \psi_h(P_x), & x &\in \R^2 \setminus \overline{\Omega},
	\end{aligned}
	\right.
\]
where $P_x \in \Gamma$ is a ``nearest'' nodal point from~$x$.
\cK
In each step, firstly, $\phi_h^k \in \Psi_h$ is obtained from the first equation of scheme~\eqref{scheme:LG}.
Secondly, $u^k_h\in V_h$ is obtained by using~$\phi_h^k$ from the second equation.
In the first equation of~\eqref{scheme:LG}, the idea of mass conservative Lagrange--Galerkin scheme~\cite{RuiTab-2010} is employed.
\cK
\par
A numerical simulation is carried out by LG scheme~\eqref{scheme:LG} with the same setting described in Subsection~\ref{subsec4.2} except $\Delta t$, where it is set as $\Delta t=0.0625$.
The results are shown in Figure~\ref{Simulatedresult_LG}.
The figures are similar to the ones in Figure~\ref{Simulatedresult} obtained by finite difference scheme~\eqref{scheme} and support the results in Figure~\ref{Simulatedresult}.
\cK
\begin{figure}[!htbp]
\begin{minipage}[b]{0.08\linewidth}
\centering
{$(i)$}
\vspace{1cm}
\end{minipage}
\begin{minipage}[b]{0.155\linewidth}
\centering
\includegraphics[width=0.99\linewidth]{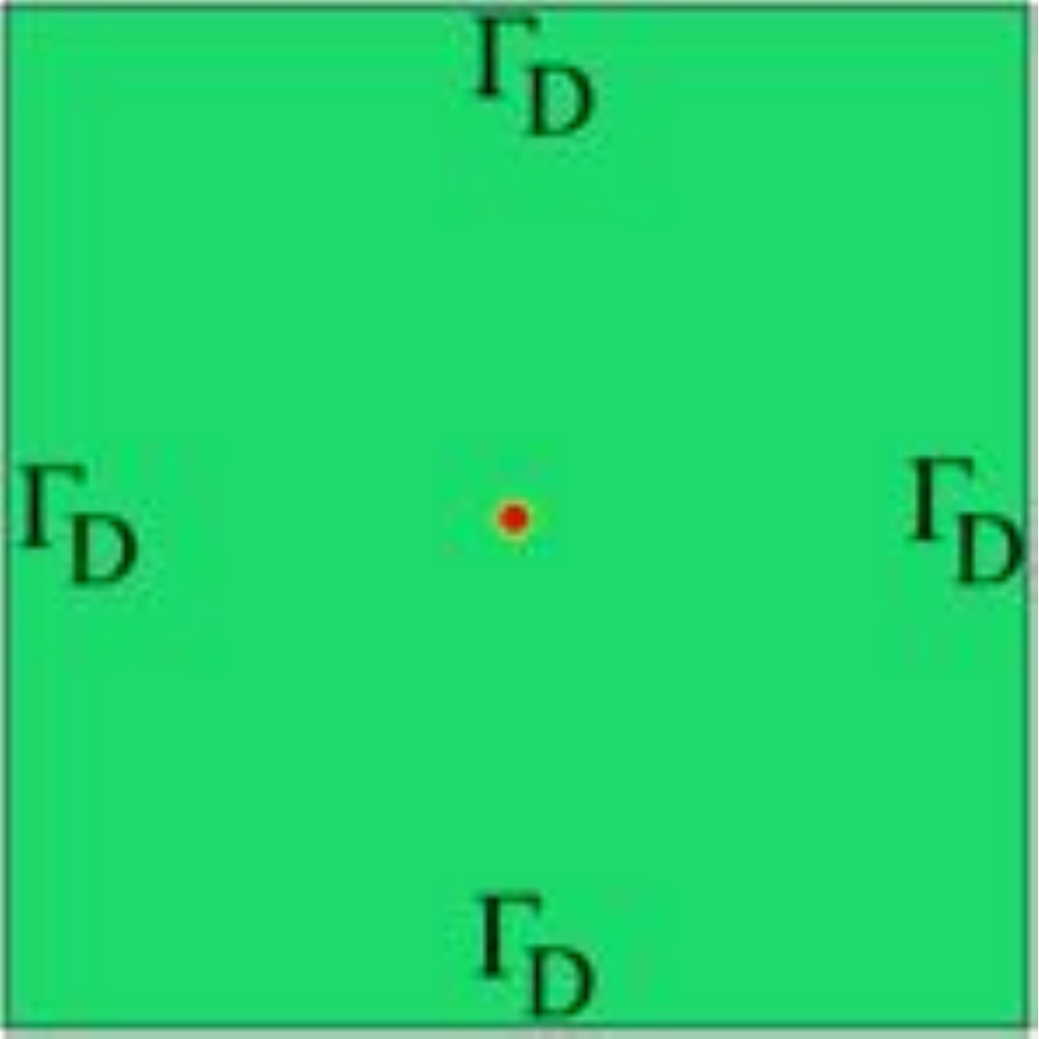}
{$t=0$}
\end{minipage}
\begin{minipage}[b]{0.155\linewidth}
\centering
\includegraphics[width=0.99\linewidth]{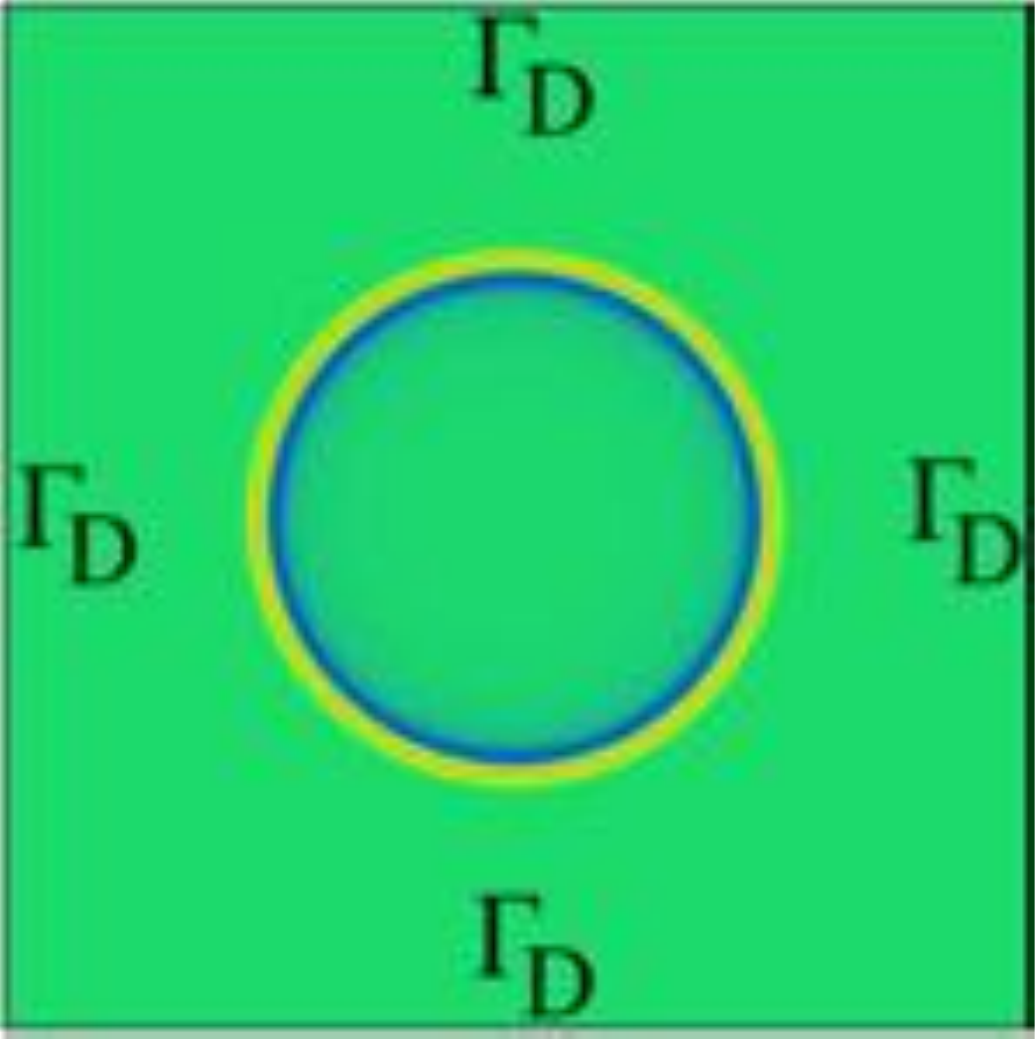}
{$t=25$}
\end{minipage}
\begin{minipage}[b]{0.155\linewidth}
\centering
\includegraphics[width=0.99\linewidth]{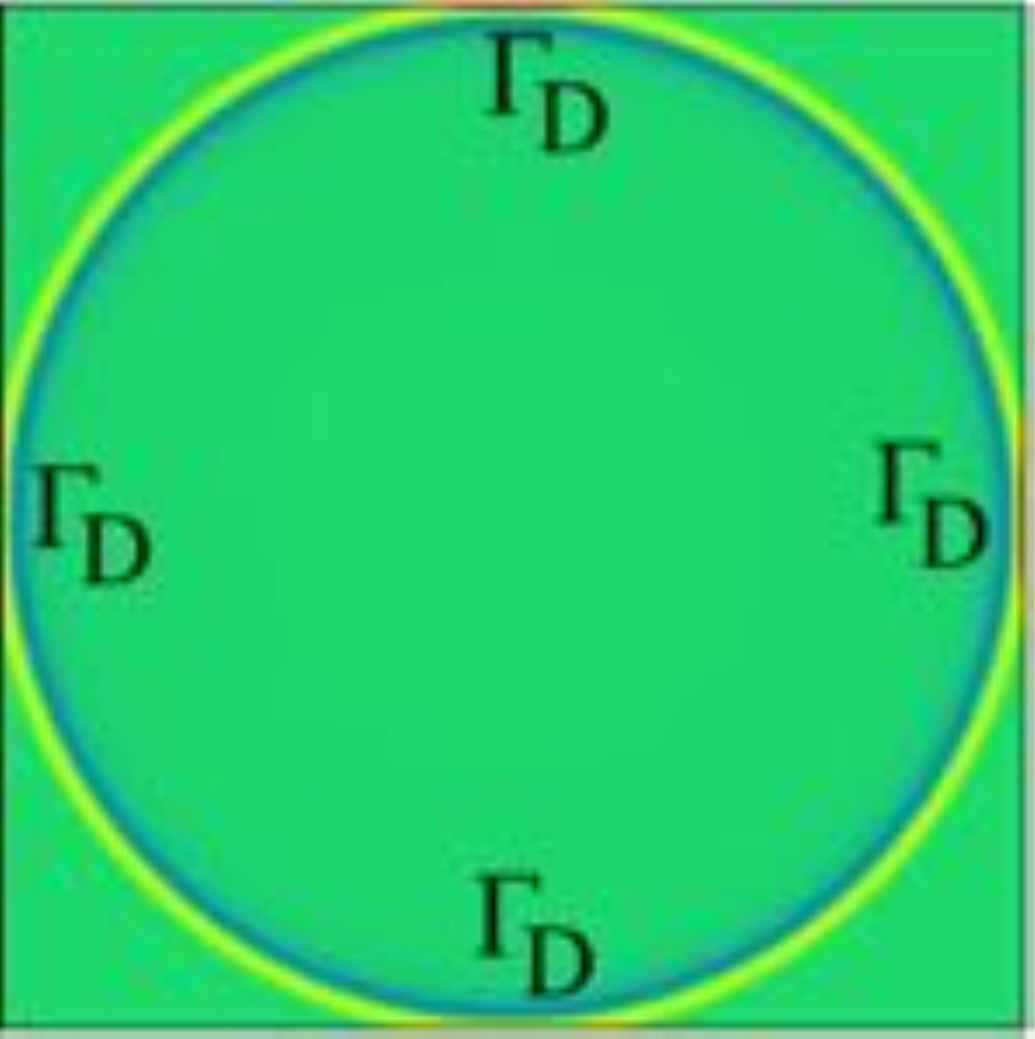}
{$t=50$}
\end{minipage}
\begin{minipage}[b]{0.155\linewidth}
\vspace{0.5cm}
\includegraphics[width=0.99\linewidth]{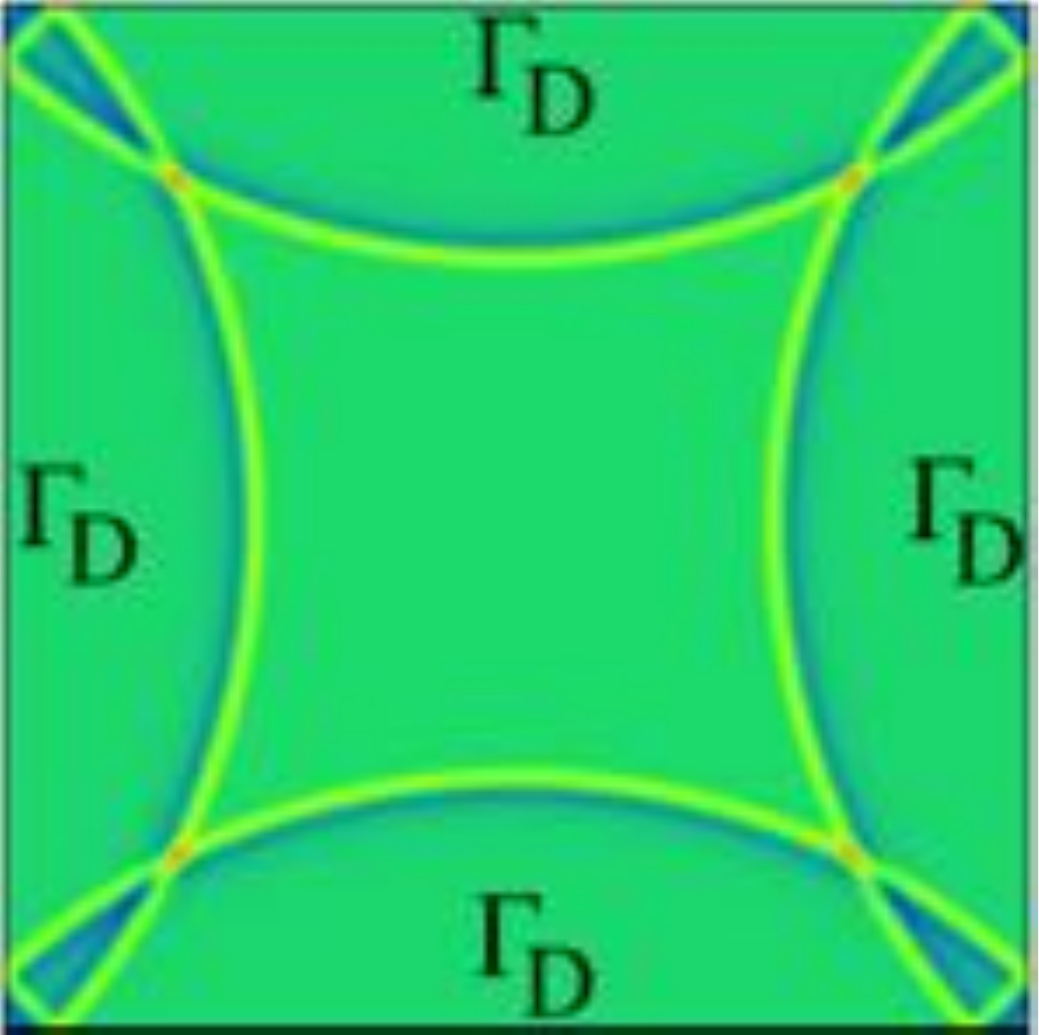}
{$t=75$}
\end{minipage}
\begin{minipage}[b]{0.155\linewidth}
\vspace{0.5cm}
\centering
\includegraphics[width=0.99\linewidth]{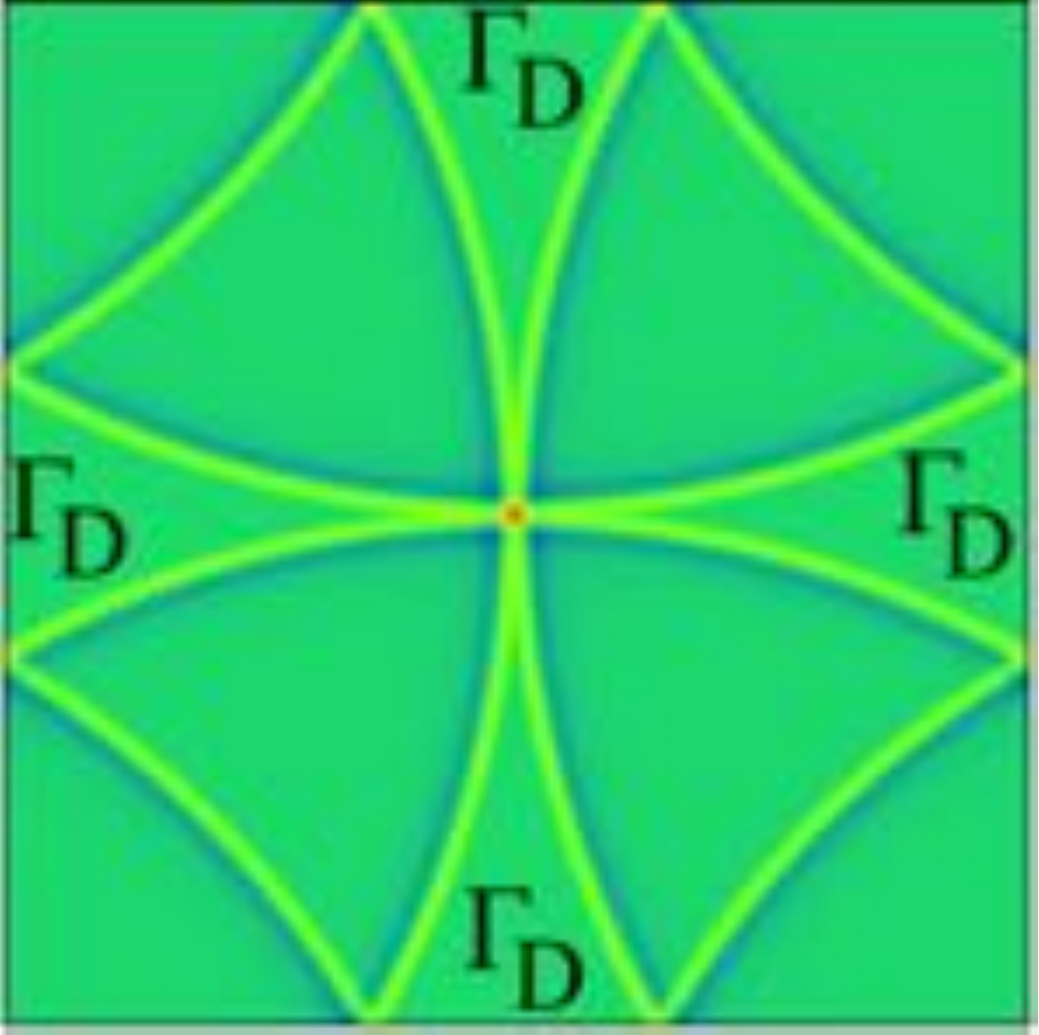}
{$t=100$}
\end{minipage}
\begin{minipage}[b]{0.062\linewidth}
\vspace{0.5cm}
\centering
\includegraphics[width=0.99\linewidth]{Colour_bar}
\vspace{0.00cm}
\end{minipage}

\begin{minipage}[b]{0.08\linewidth}
\centering
{$(ii)$}
\vspace{1cm}
\end{minipage}
\begin{minipage}[b]{0.155\linewidth}
\centering
\includegraphics[width=0.99\linewidth]{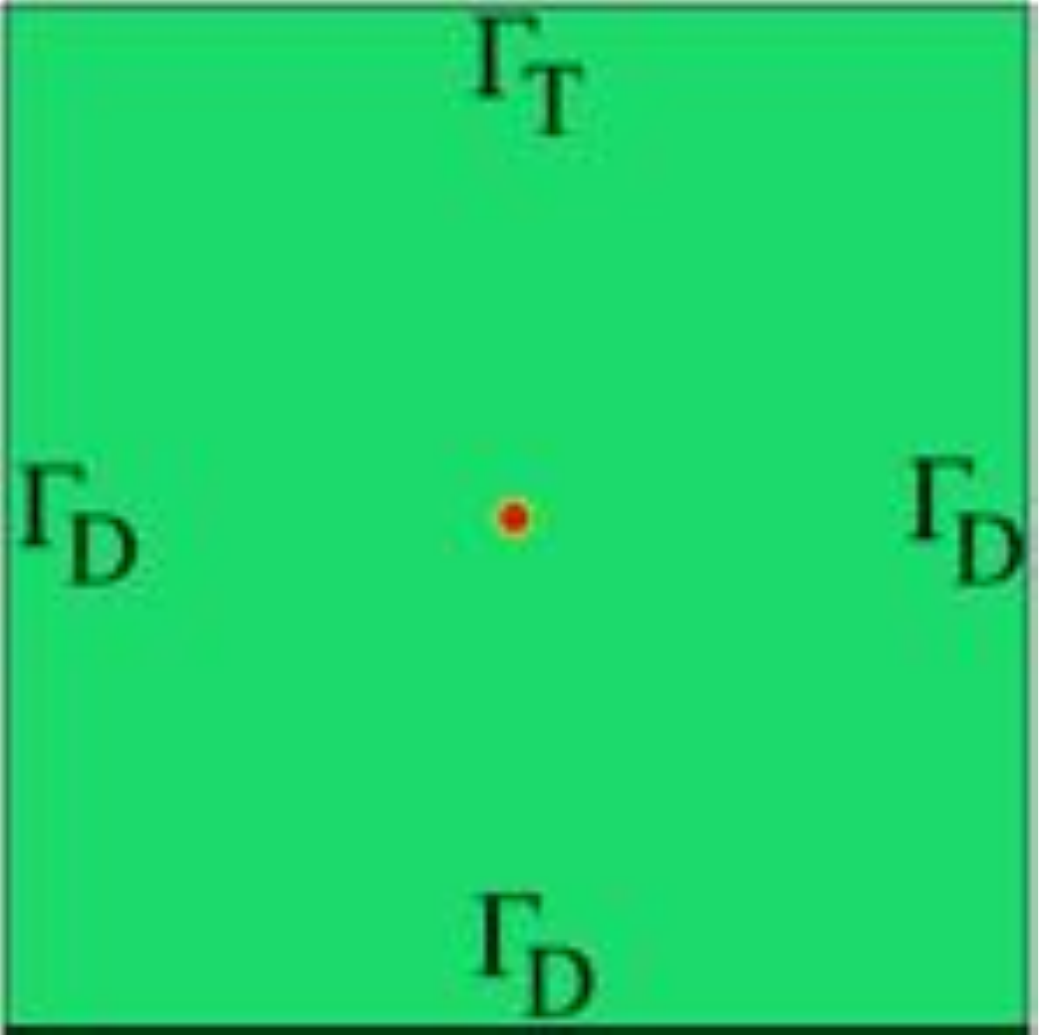}
{$t=0$}
\end{minipage}
\begin{minipage}[b]{0.155\linewidth}
\centering
\includegraphics[width=0.99\linewidth]{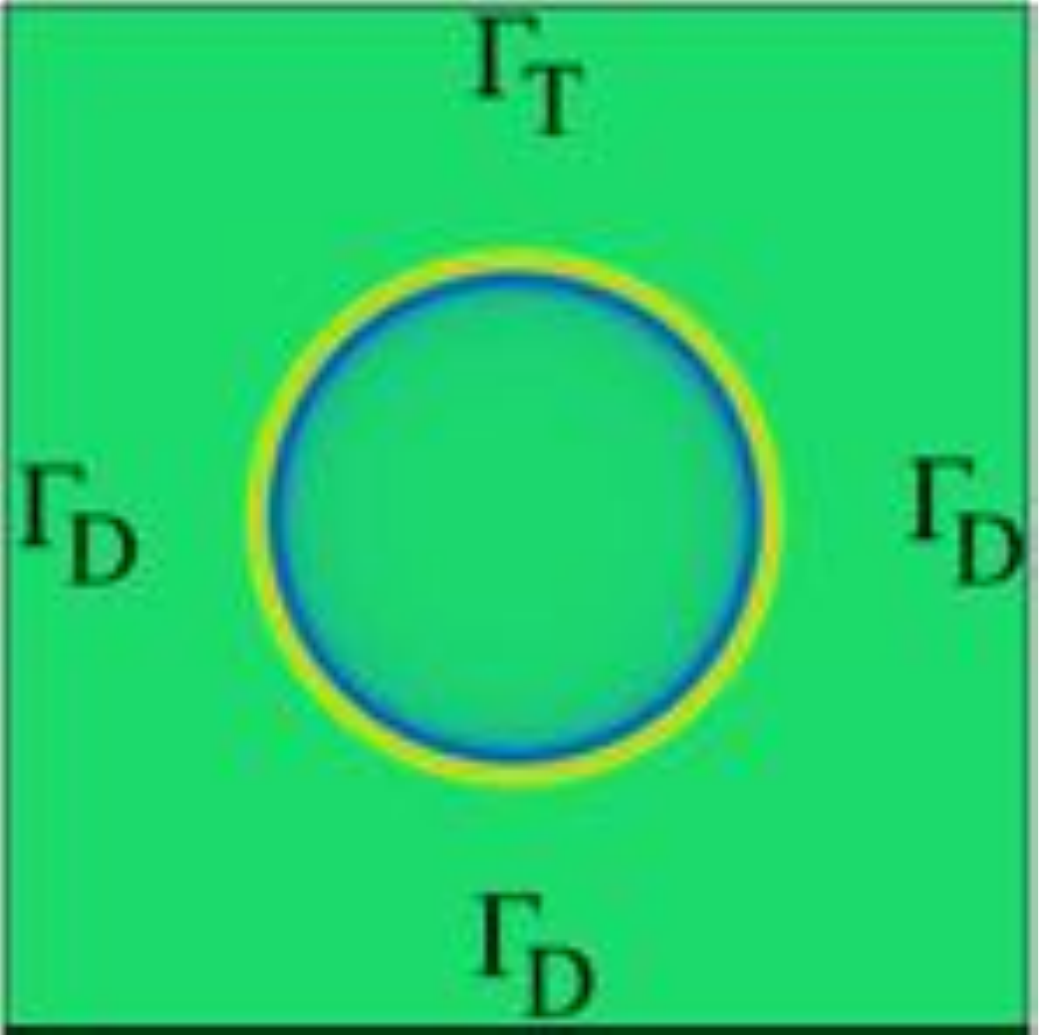}
{$t=25$}
\end{minipage}
\begin{minipage}[b]{0.155\linewidth}
\centering
\includegraphics[width=0.99\linewidth]{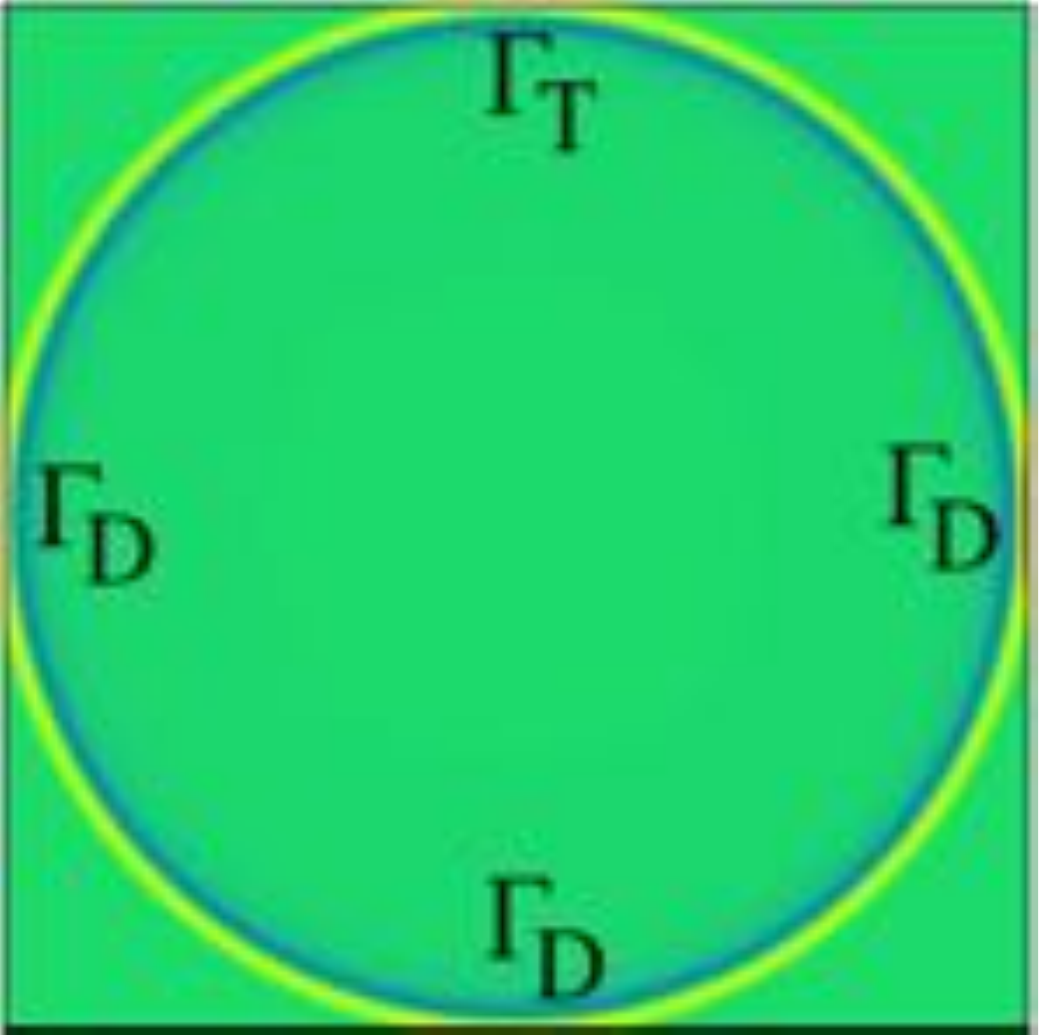}
{$t=50$}
\end{minipage}
\begin{minipage}[b]{0.155\linewidth}
\vspace{0.5cm}
\centering
\includegraphics[width=0.99\linewidth]{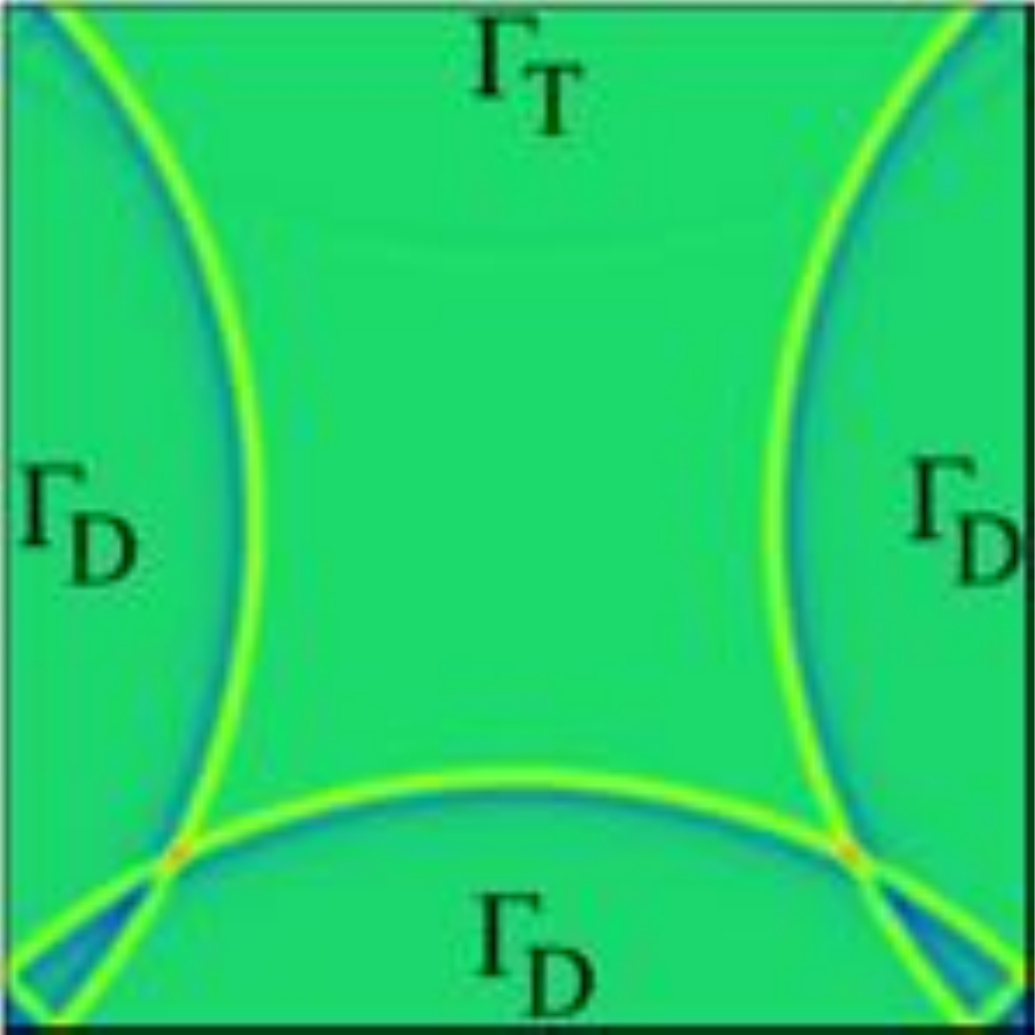}
{$t=75$}
\end{minipage}
\begin{minipage}[b]{0.155\linewidth}
\vspace{0.5cm}
\centering
\includegraphics[width=0.99\linewidth]{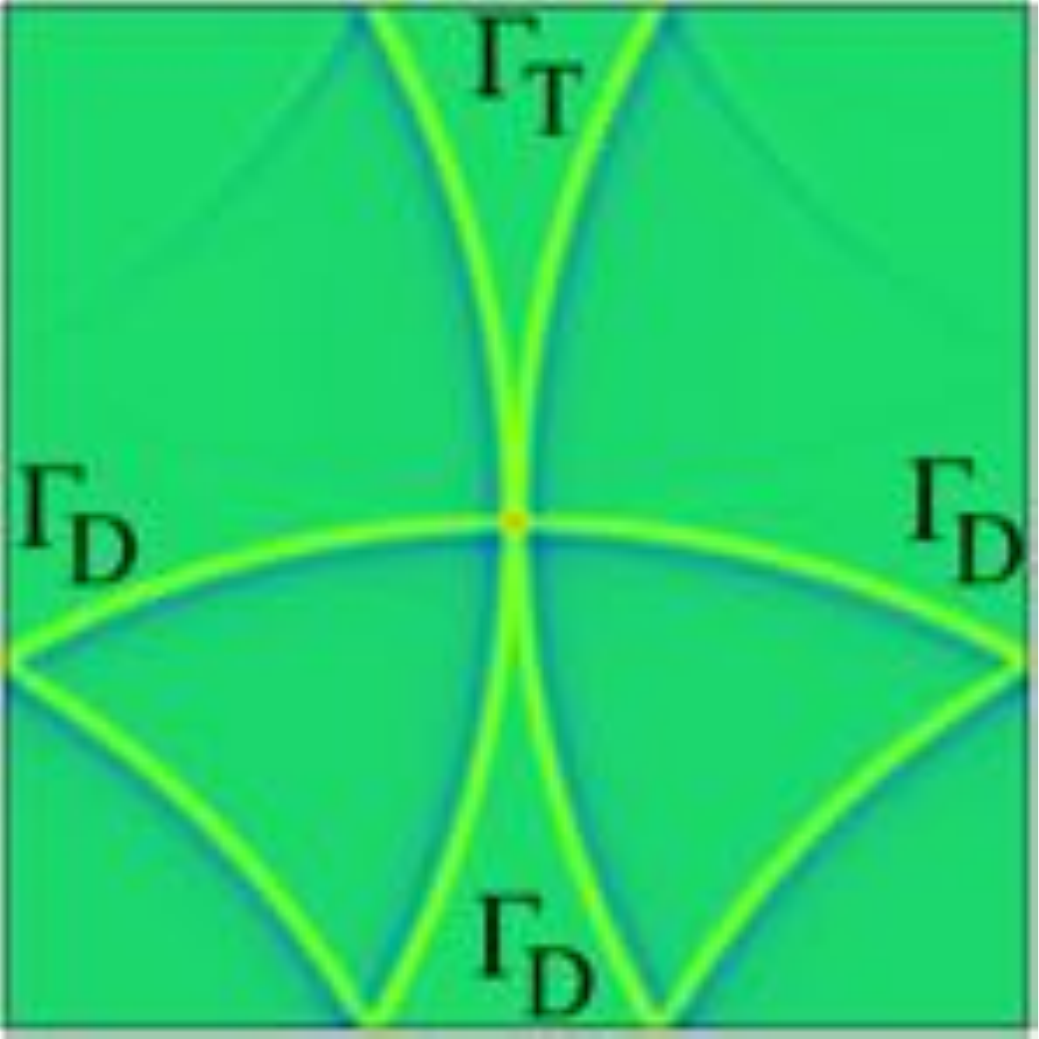}
{$t=100$}
\end{minipage}
\begin{minipage}[b]{0.062\linewidth}
\vspace{0.5cm}
\centering
\includegraphics[width=0.99\linewidth]{Colour_bar}
\vspace{0.00cm}
\end{minipage} 
  
\begin{minipage}[b]{0.08\linewidth}
\centering
{$(iii)$}
\vspace{1cm}
\end{minipage}
\begin{minipage}[b]{0.155\linewidth}
\centering
\includegraphics[width=0.99\linewidth]{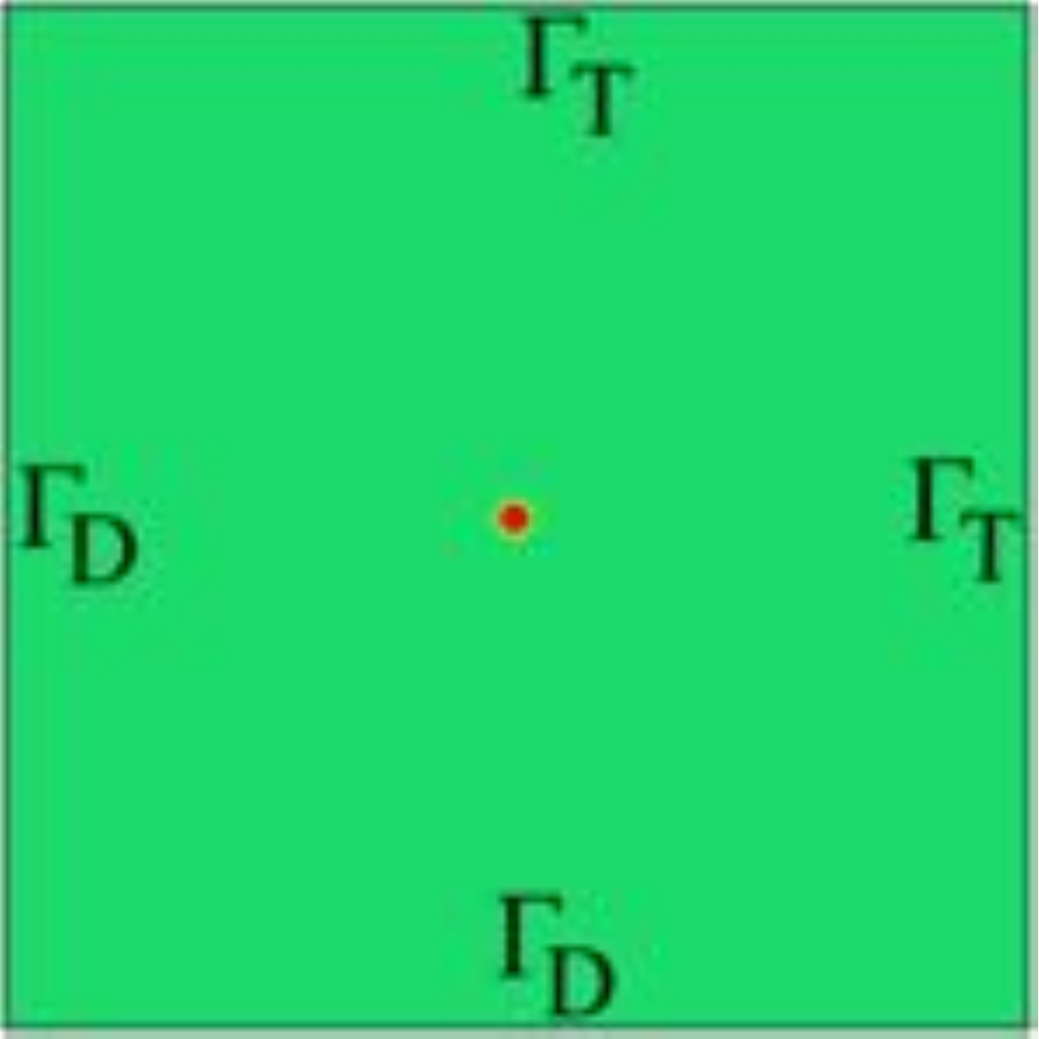}
{$t=0$}
\end{minipage}
\begin{minipage}[b]{0.155\linewidth}
\centering
\includegraphics[width=0.99\linewidth]{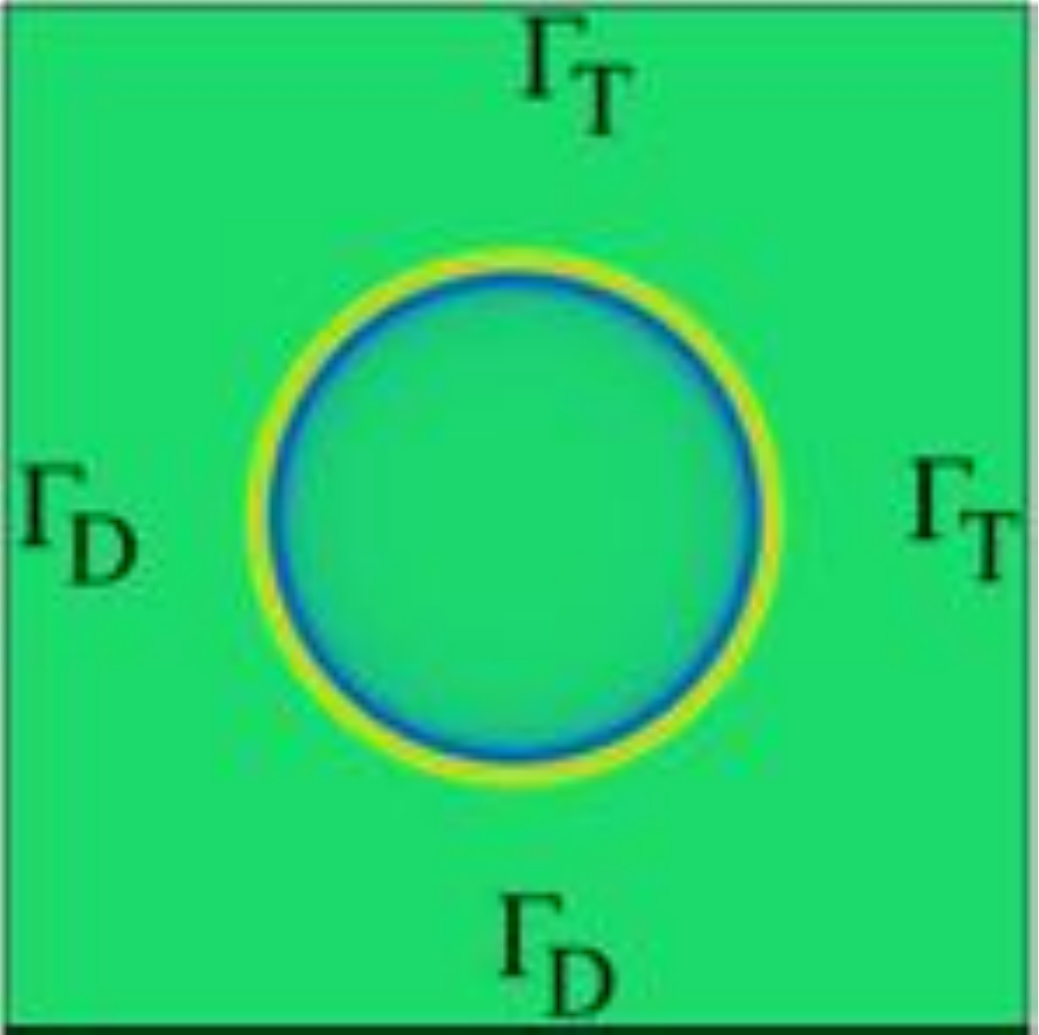}
{$t=25$}
\end{minipage}
\begin{minipage}[b]{0.155\linewidth}
\centering
\includegraphics[width=0.99\linewidth]{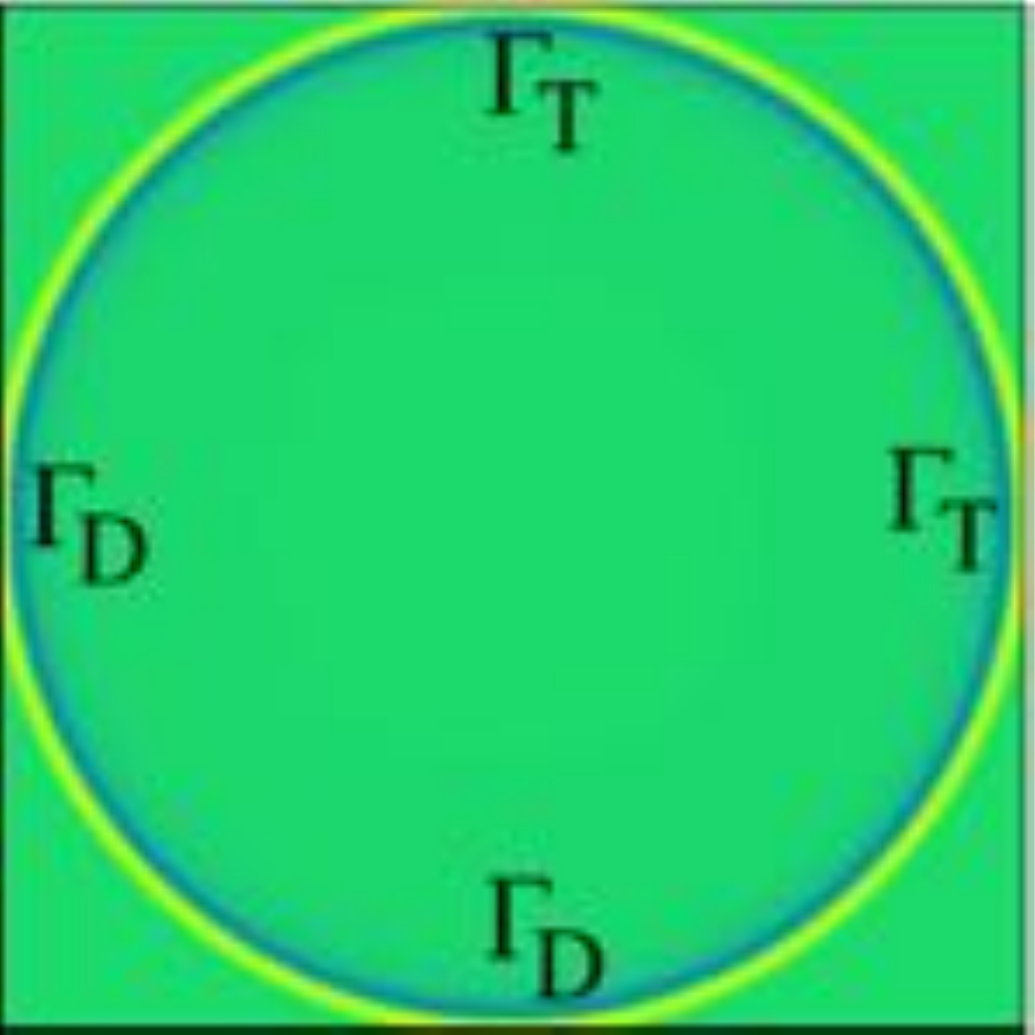}
{$t=50$}
\end{minipage}
\begin{minipage}[b]{0.155\linewidth}
\vspace{0.5cm}
\centering
\includegraphics[width=0.99\linewidth]{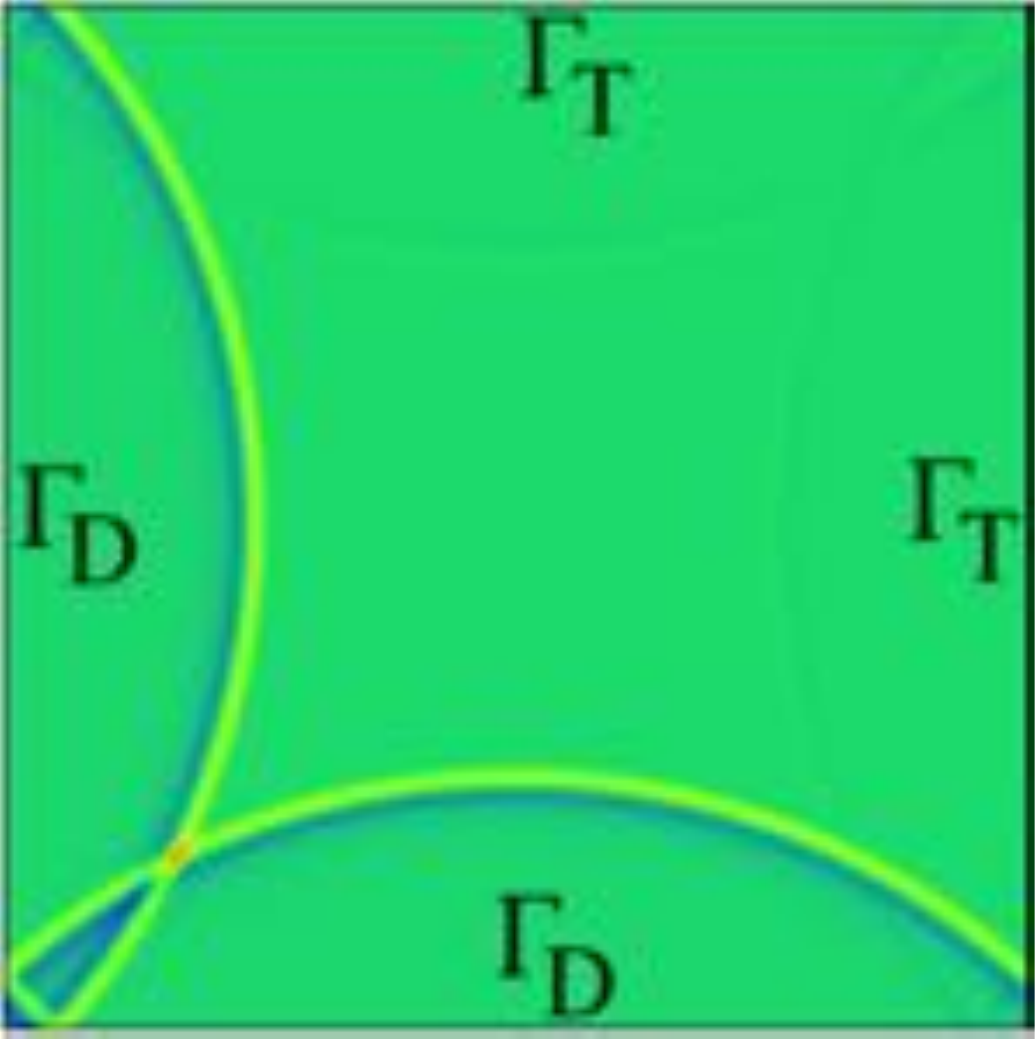}
{$t=75$}
\end{minipage}
\begin{minipage}[b]{0.155\linewidth}
\vspace{0.5cm}
\centering
\includegraphics[width=0.99\linewidth]{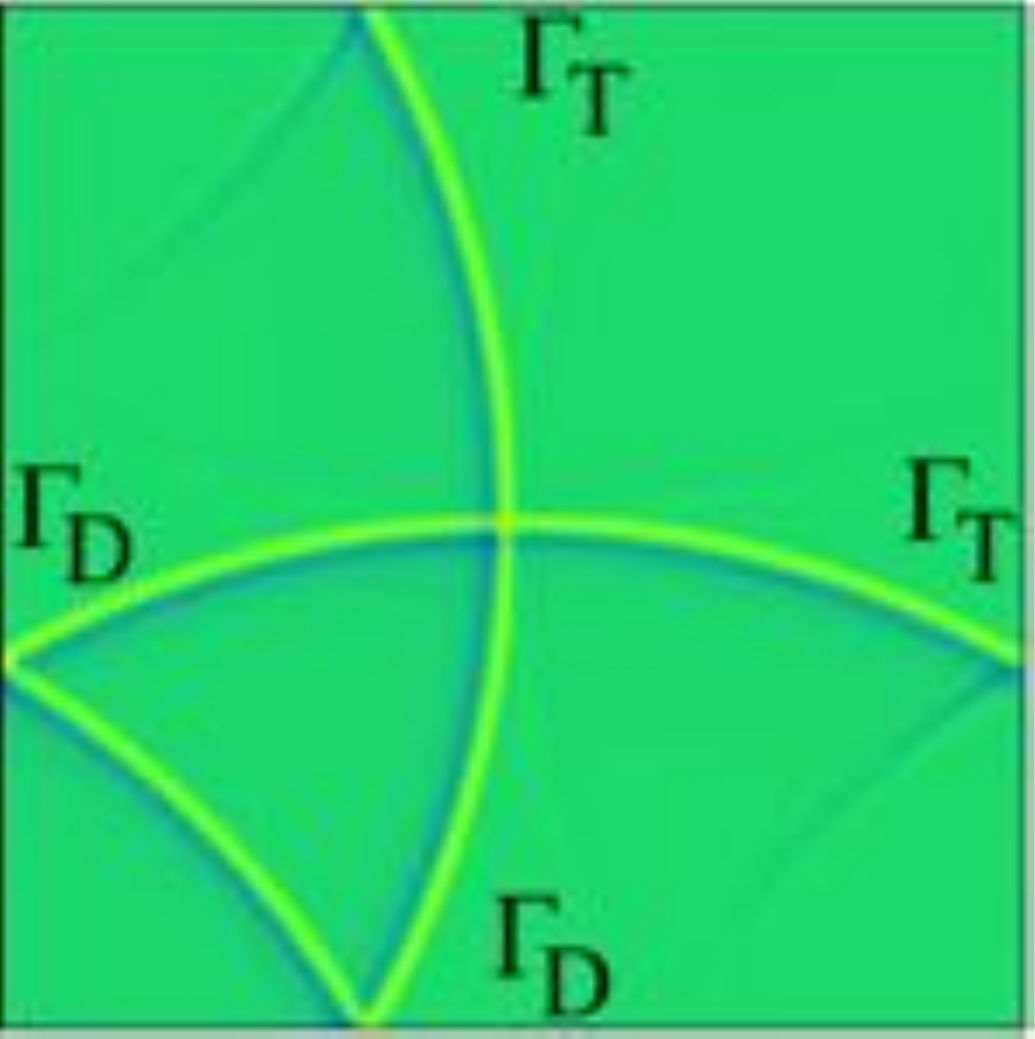}
{$t=100$}
\end{minipage}
\begin{minipage}[b]{0.062\linewidth}
\vspace{0.5cm}
\centering
\includegraphics[width=0.99\linewidth]{Colour_bar}
\vspace{0.00cm}
\end{minipage}
 
\begin{minipage}[b]{0.08\linewidth}
\centering
{$(iv)$}
\vspace{1cm}
\end{minipage}
\begin{minipage}[b]{0.155\linewidth}
\centering
\includegraphics[width=0.99\linewidth]{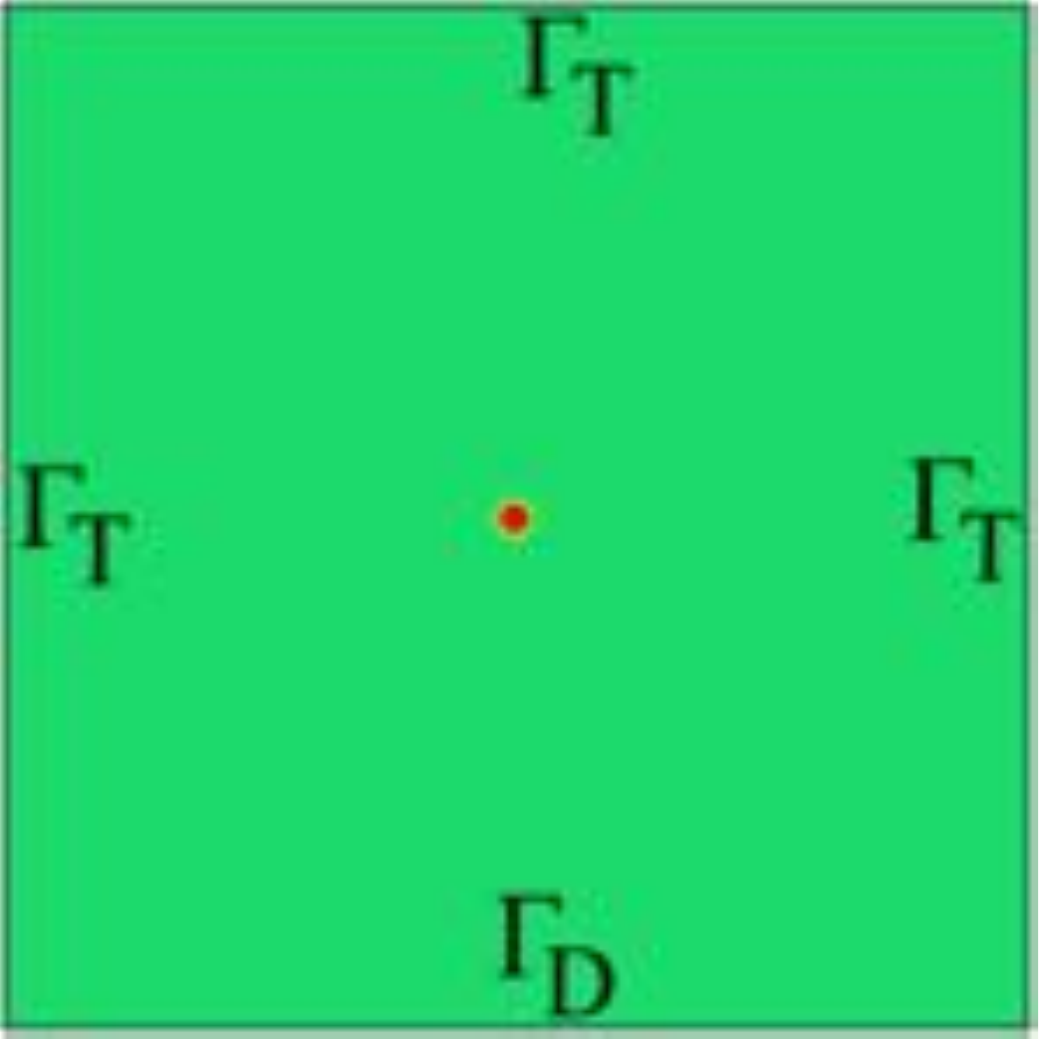}
{$t=0$}
\end{minipage}
\begin{minipage}[b]{0.155\linewidth}
\centering
\includegraphics[width=0.99\linewidth]{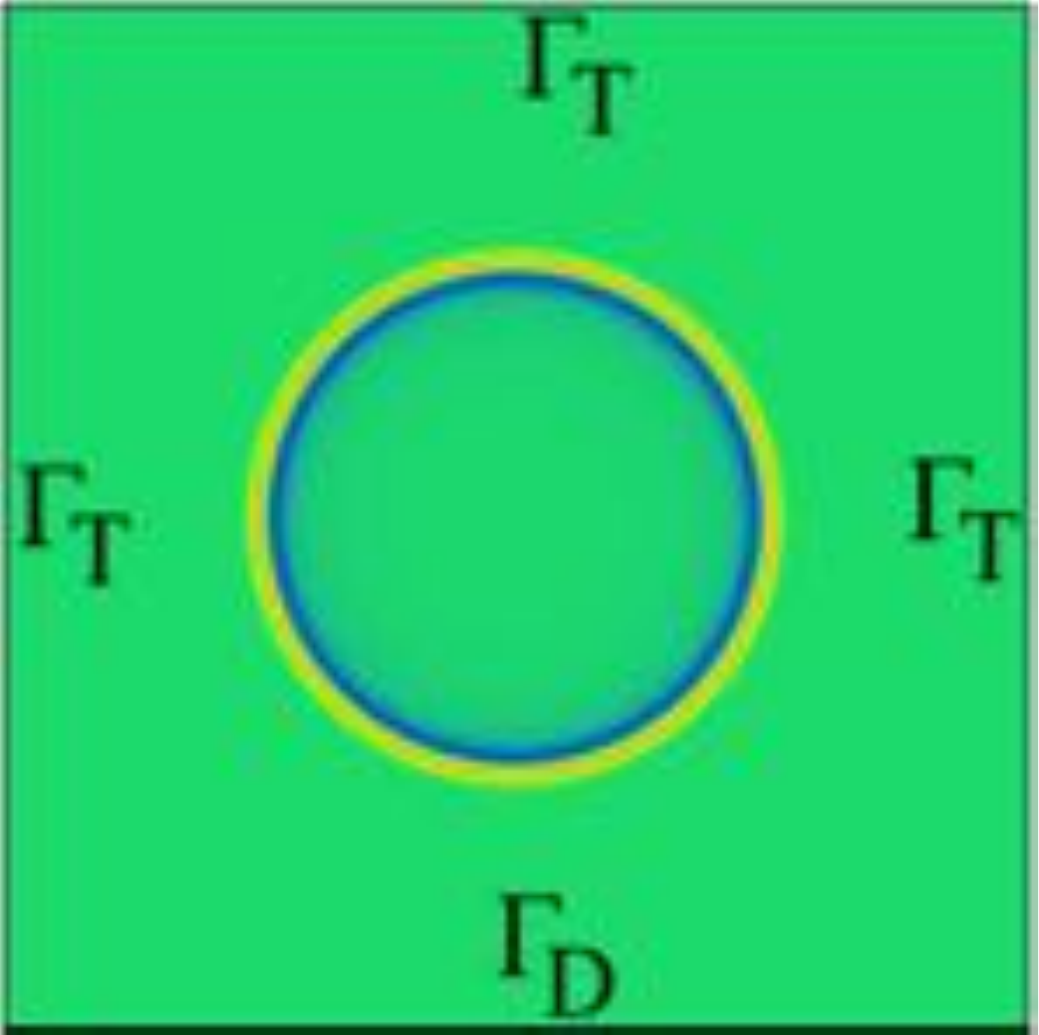}
{$t=25$}
\end{minipage}
\begin{minipage}[b]{0.155\linewidth}
\centering
\includegraphics[width=0.99\linewidth]{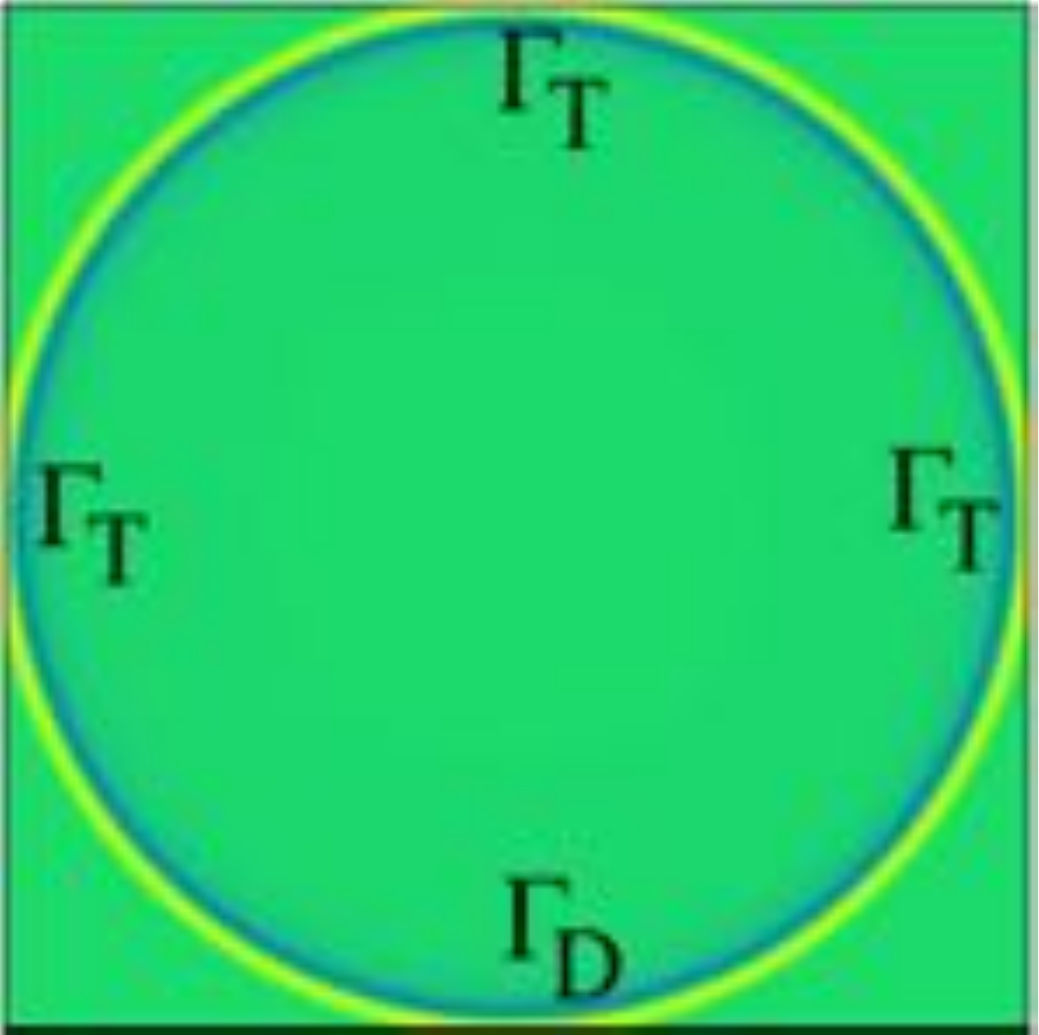}
{$t=50$}
\end{minipage}
\begin{minipage}[b]{0.155\linewidth}
\vspace{0.5cm}
\centering
\includegraphics[width=0.99\linewidth]{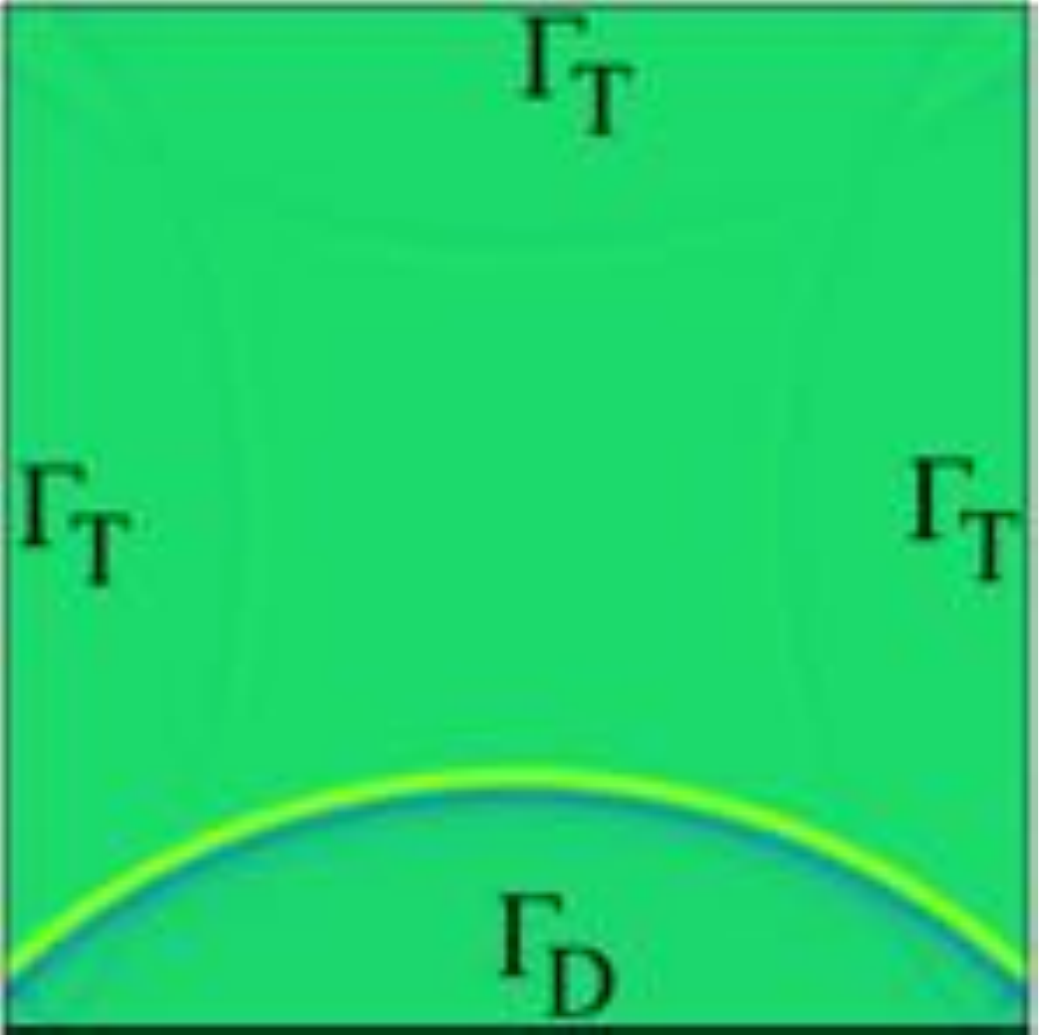}
{$t=75$}
\end{minipage}
\begin{minipage}[b]{0.155\linewidth}
\vspace{0.5cm}
\centering
\includegraphics[width=0.99\linewidth]{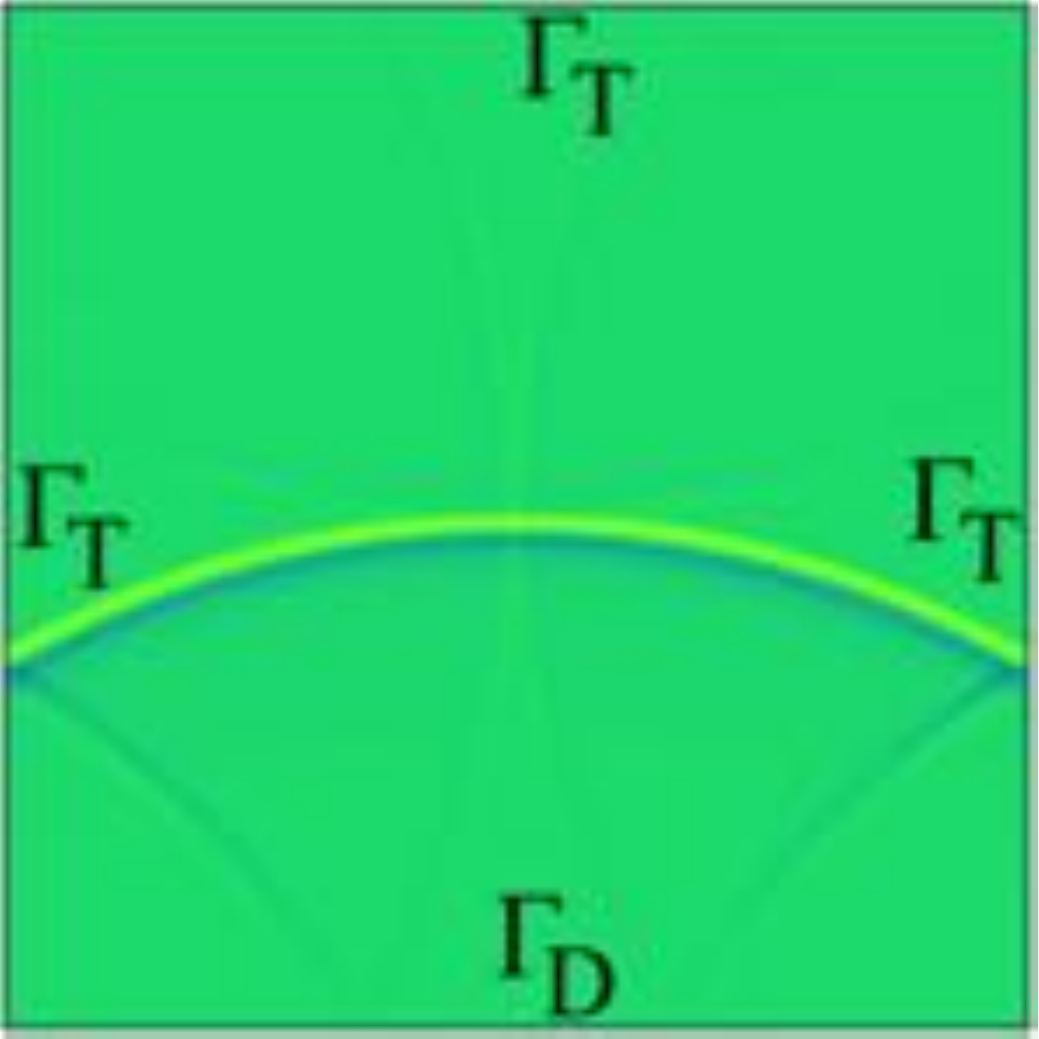}
{$t=100$}
\end{minipage}
\begin{minipage}[b]{0.062\linewidth}
\vspace{0.5cm}
\centering
\includegraphics[width=0.99\linewidth]{Colour_bar}
\vspace{0.00cm}
\end{minipage} 

\begin{minipage}[b]{0.08\linewidth}
\centering
{$(v)$}
\vspace{1cm}
\end{minipage}
\begin{minipage}[b]{0.155\linewidth}
\centering
\includegraphics[width=0.99\linewidth]{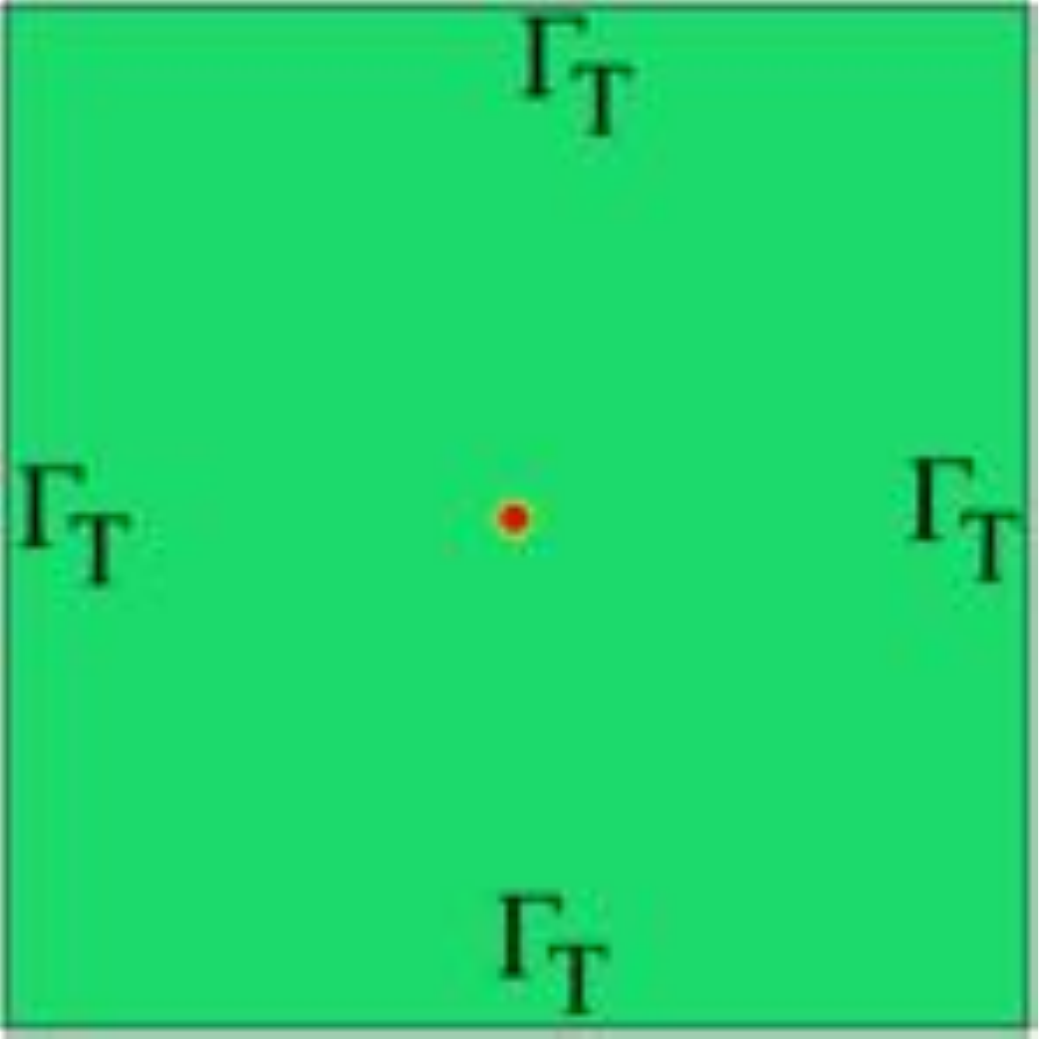}
{$t=0$}
\end{minipage}
\begin{minipage}[b]{0.155\linewidth}
\centering
\includegraphics[width=0.99\linewidth]{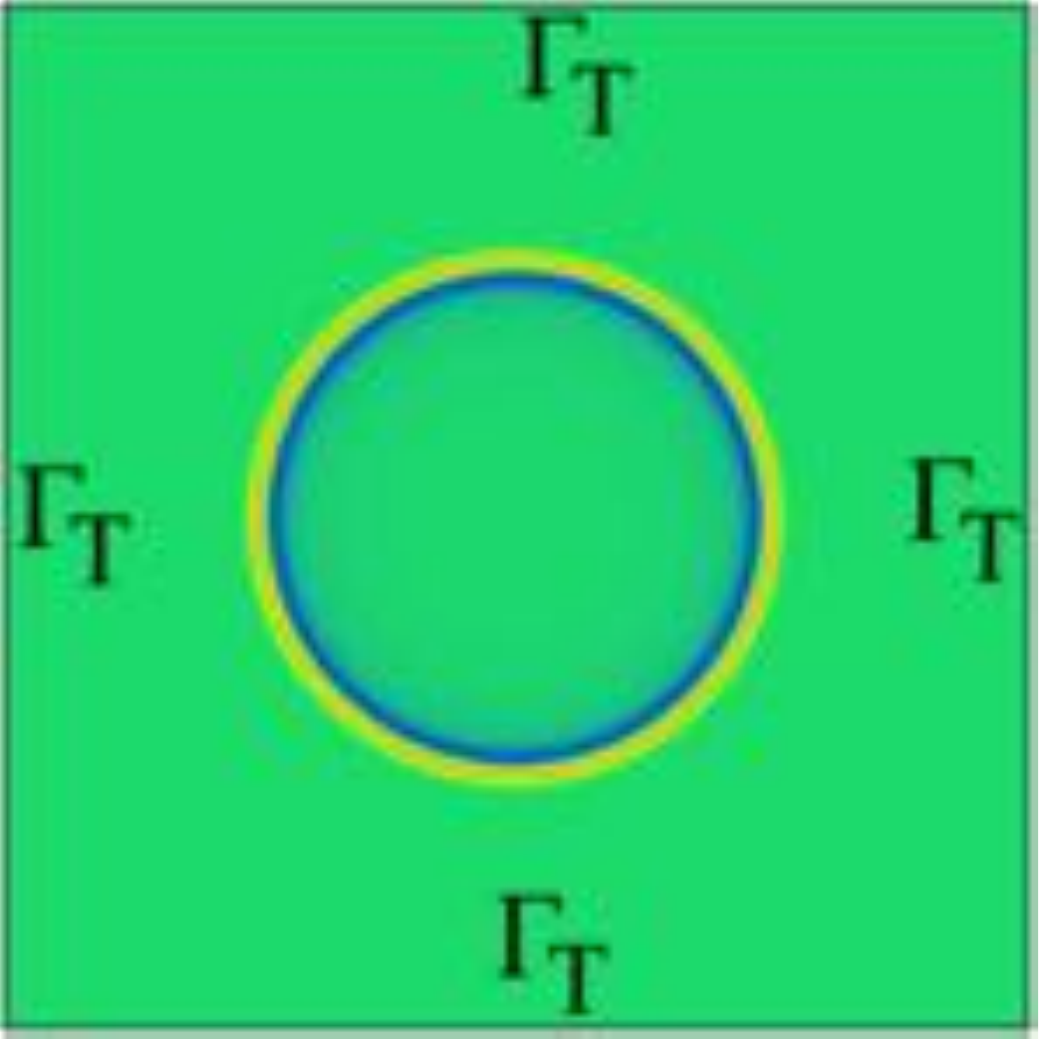}
{$t=25$}
\end{minipage}
\begin{minipage}[b]{0.155\linewidth}
\centering
\includegraphics[width=0.99\linewidth]{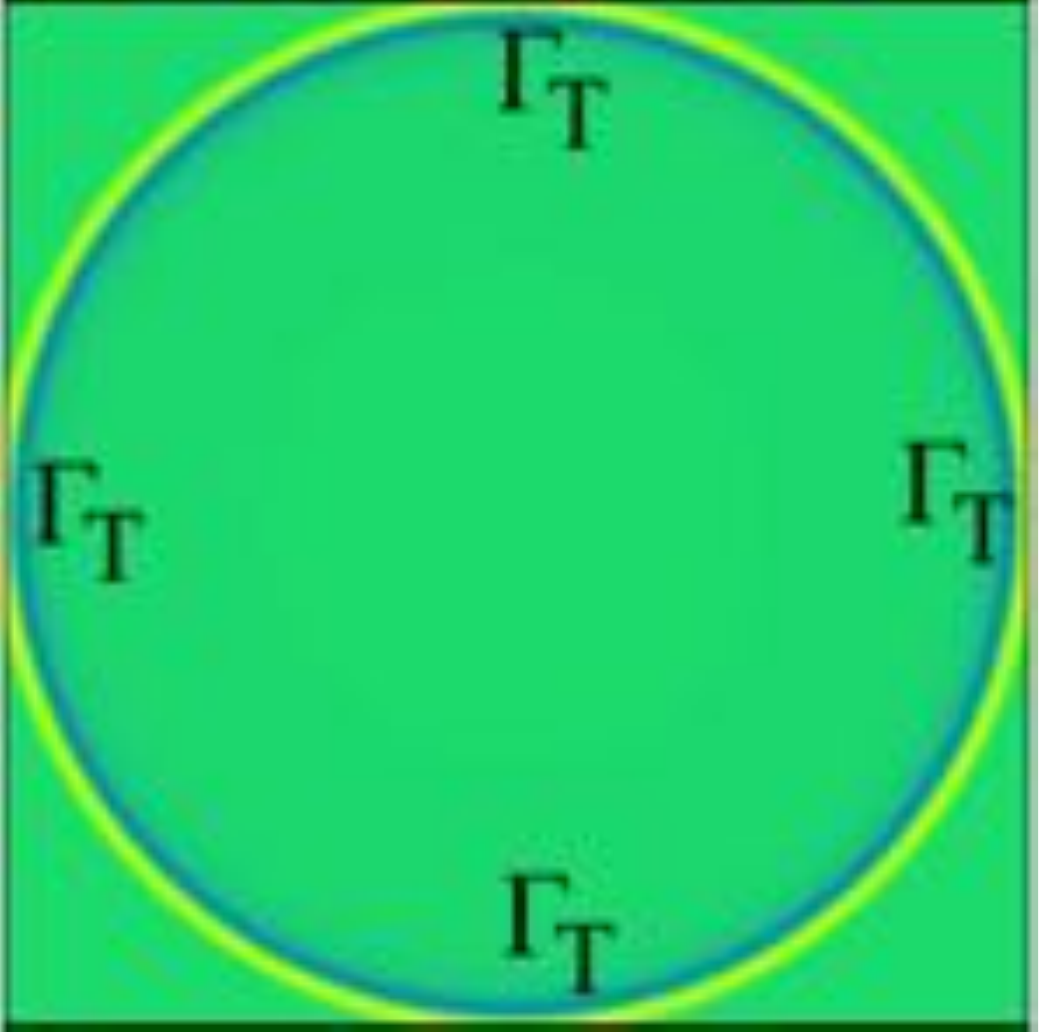}
{$t=50$}
\end{minipage}
\begin{minipage}[b]{0.155\linewidth}
\vspace{0.5cm}
\centering
\includegraphics[width=0.99\linewidth]{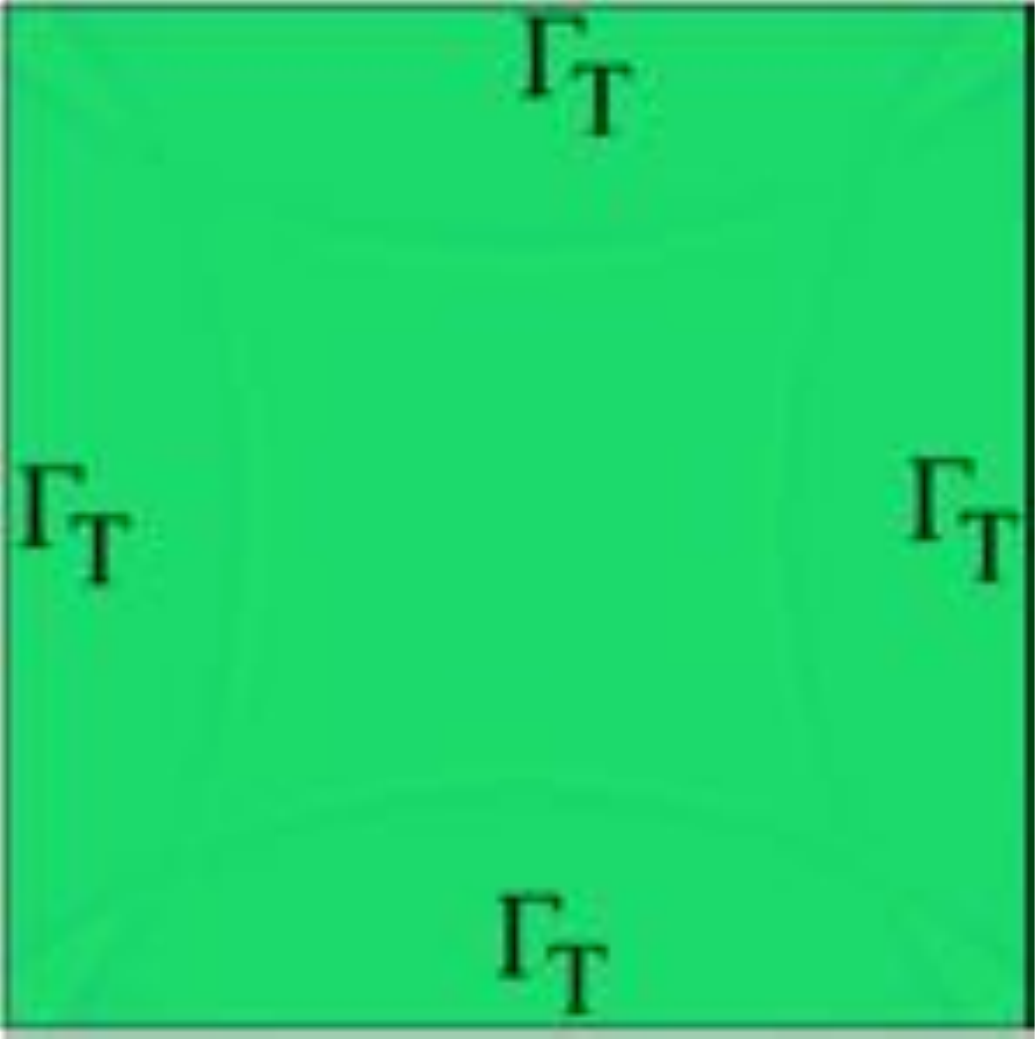}
{$t=75$}
\end{minipage}
\begin{minipage}[b]{0.155\linewidth}
\vspace{0.5cm}
\centering
\includegraphics[width=0.99\linewidth]{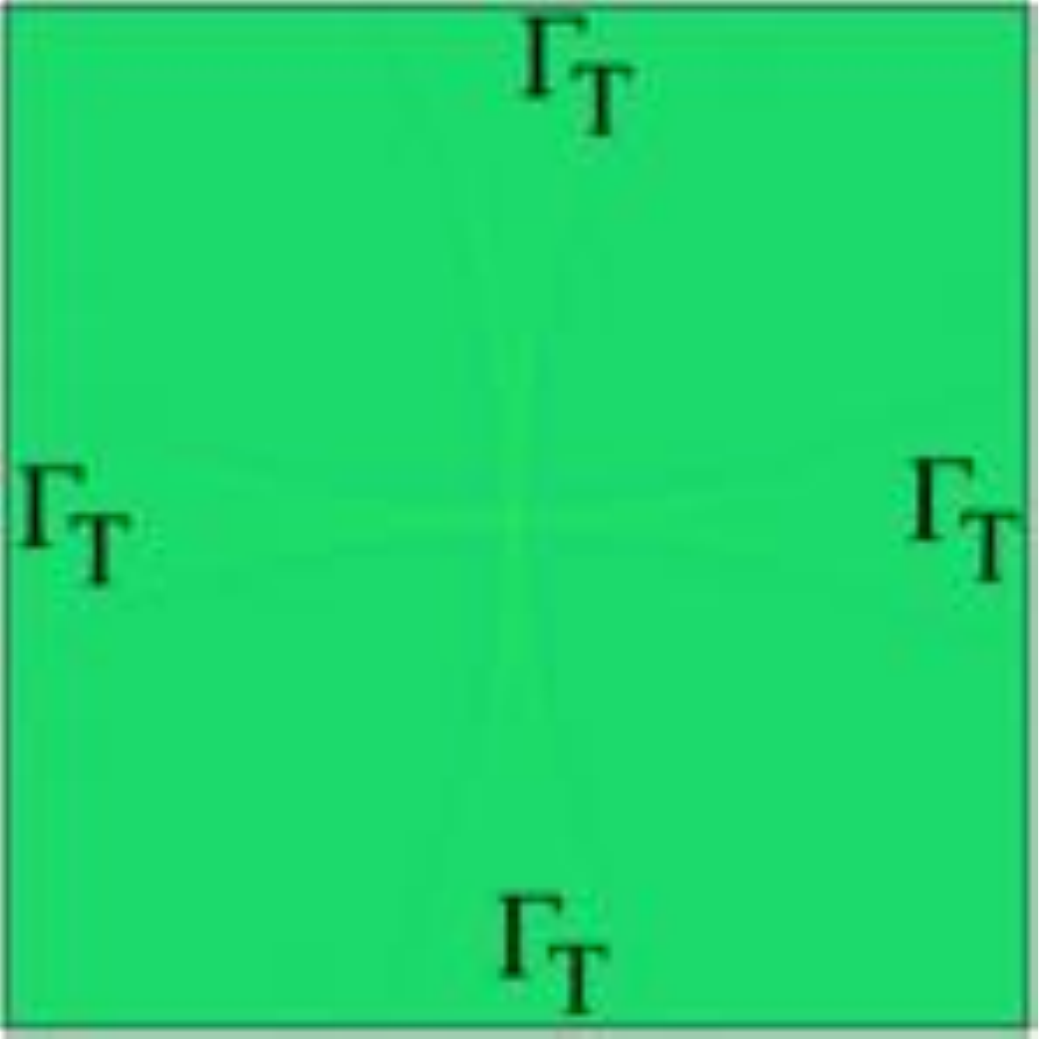}
{$t=100$}
\end{minipage}
\begin{minipage}[b]{0.062\linewidth}
\vspace{0.5cm}
\centering
\includegraphics[width=0.99\linewidth]{Colour_bar}
\vspace{0.00cm}
\end{minipage}

\caption{\cK Color contours of $\eta_h^k$ by Lagrange--Galerkin scheme~\eqref{scheme:LG} for the five cases~$(i)$-$(v)$ discussed in Section~\ref{sec5}.}
\label{Simulatedresult_LG}
\end{figure}
\section{Conclusions}\label{sec6}
Energy estimates of the SWEs with a transmission boundary condition have been studied mathematically and numerically.
For a suitable energy, we have obtained an equality that the time-derivative of the energy is equal to a sum of three line integrals and a domain integral in Theorem~\ref{th1}.
The theorem implies a (successful) energy estimate of the SWEs with the Dirichlet and the slip boundary conditions, cf. Corollary~\ref{cor1}-(ii).
After that, an inequality for the energy estimate of the SWEs with the transmission boundary condition has been proved in Theorem~\ref{th2}.
In the proof, it has been shown that a sum of two line integrals over the transmission boundary is non-positive under some conditions to be satisfied in practical computation.
Based on the theoretical results, the energy estimate of SWEs with the transmission boundary condition has been confirmed numerically by an FDM.
It is found that the transmission boundary condition works well numerically and that the transmission boundary condition reduces the energy drastically via the term $I_{h2}^k$.
The choice of a positive constant~$c_0$ used in the transmission boundary condition has been investigated additionally, and it has been observed that the suitable value lies in~$[0.7, 1.0]$ in the case of zero initial velocity.
Furthermore, we have presented numerical results by an LG scheme, which are similar to those by the FDM.
Completeness of the (theoretical) energy estimate of the SWEs with the transmission boundary condition is a future work.
%
%
%
%

\section*{Acknowledgements}
M.M.M. is supported by MEXT scholarship.
This work is partially supported by JSPS KAKENHI Grant Numbers JP16H02155, JP17H02857 and JP18H01135, JSPS A3 Foresight Program, and JST PRESTO Grant Number~JPMJPR16EA.


%
%
%
%
%
%
%
%
%
%

\medskip

\end{document}